\UseRawInputEncoding
\documentclass[final,hidelinks,onefignum,onetabnum]{siamart251216}

\usepackage[T1]{fontenc}
\usepackage{amsfonts,amsmath,amssymb}
\usepackage{bm}
\usepackage{mathtools}
\usepackage{graphicx}
\usepackage{booktabs}
\usepackage{longtable}
\usepackage{array}
\usepackage{algorithm}
\usepackage{algorithmic}
\usepackage[acronym,nomain]{glossaries}
\setacronymstyle{long-short}
\glsdisablehyper

\newacronym{ode}{ODE}{ordinary differential equation}
\newacronym{pde}{PDE}{partial differential equation}
\newacronym{roa}{ROA}{region of attraction}
\newacronym{sos}{SOS}{sum-of-squares}
\newacronym{dsos}{DSOS}{diagonally dominant sum-of-squares}
\newacronym{sdsos}{SDSOS}{scaled diagonally dominant sum-of-squares}
\newacronym{sdp}{SDP}{semidefinite program}
\newacronym{lmi}{LMI}{linear matrix inequality}
\newacronym{lp}{LP}{linear program}
\newacronym{socp}{SOCP}{second-order cone program}
\newacronym{sindy}{SINDy}{sparse identification of nonlinear dynamics}

\newsiamthm{assumption}{Assumption}
\newsiamthm{problem}{Problem}
\newsiamremark{remark}{Remark}
\newsiamremark{example}{Example}

\headers{Optimizing Lyapunov Certificates via Dissipative Quadratization}{Y. Cai and G. Zardini}

\title{Optimizing Lyapunov Certificates via Stability-Preserving Quadratization for Polynomial Systems}

\providecommand{\thanksmark}[1]{\footnotemark[#1]}
\author{Yubo Cai\thanks{Laboratory for Information \& Decision Systems,
Massachusetts Institute of Technology, Cambridge, MA
(\email{yubocai@mit.edu}; \email{gzardini@mit.edu}).
\funding{Y. Cai was partially supported by the MathWorks Research Fellowship. Cai and Zardini were sponsored by the Air Force Office of Scientific Research under agreement number FA9550261B038, and by the Defense Advanced Research Projects Agency (DARPA) under Award No. D25AC00373.}}\and Gioele Zardini\thanksmark{1}}

\newcommand{\R}{\mathbb{R}}
\newcommand{\Sset}{\mathbb{S}}
\newcommand{\M}{\mathcal{M}}
\newcommand{\norm}[1]{\lVert #1 \rVert}
\newcommand{\kron}{\otimes}
\newcommand{\Real}{\operatorname{Re}}
\DeclareMathOperator{\diag}{diag}
\newcommand{\eig}{\operatorname{eig}}
\newcommand{\Aof}{A(\Lambda)}
\newcommand{\Hof}{H(\Lambda,\mu)}
\newcommand{\Dof}{D(\Lambda)}

\newcommand{\vecx}{\mathbf{x}}
\newcommand{\vecy}{\mathbf{y}}

\newcommand{\vecw}{\mathbf{w}}

\newcommand{\vecg}{\mathbf{g}}
\newcommand{\vech}{\mathbf{h}}
\newcommand{\vecq}{\mathbf{q}}

\ifpdf
\hypersetup{
  pdftitle={Optimizing Lyapunov Certificates via Stability-Preserving Quadratization for Polynomial Systems},
  pdfauthor={Yubo Cai and Gioele Zardini}
}
\fi

\usepackage[numbers,sort&compress]{natbib}
\usepackage{enumitem}
\usepackage{microtype}
\usepackage{xcolor}
\usepackage{tikz}
\usepackage{titletoc}
\usepackage{needspace}
\usetikzlibrary{arrows.meta,positioning,calc}
\AddToHook{cmd/append/before}{\Needspace{4\baselineskip}}

\titlecontents{appsection}[0pt]
  {\addvspace{.45em}\small\bfseries}
  {}{}
  {\titlerule*[.6pc]{.}\contentspage}
\titlecontents{appsubsection}[3.4em]
  {\small}
  {\contentslabel{2.6em}}{}
  {\titlerule*[.6pc]{.}\contentspage}

\crefname{assumption}{Assumption}{Assumptions}
\crefname{problem}{Problem}{Problems}

\begin{document}
\maketitle

\begin{abstract}
Region-of-attraction certificates for polynomial systems become expensive as the state
dimension and polynomial degree grow, because direct sum-of-squares formulations require a
monomial basis whose size grows combinatorially. Quadratization sidesteps this cost by
representing a polynomial vector field exactly on an invariant manifold of a quadratic system,
so that a quadratic Lyapunov function can certify the region of attraction directly. Even once
the lifting map is fixed, this exactness constrains the lifted dynamics only on the manifold,
leaving freedom in how they extend off it. Stabilizer gains shape this off-manifold extension
and change the transverse dynamics, while representation gauges leave the vector field unchanged
but change its matrix representation. Both choices affect the resulting spectral-norm
certificate, yet prior work has treated them separately, fixing the gain with a feasibility
heuristic and optimizing the gauge only afterward, for that fixed pair.

We formulate optimal dissipative quadratization (\textsc{ODQ}), which designs the gain and the
gauge together for a fixed monomial lift, reference extension, and stabilizer factorization,
with the Lyapunov weight fixed to the identity. \textsc{ODQ} selects stabilizer gains within a
prescribed compact Hurwitz box and, for each candidate gain, optimizes the representation gauge
by an exact semidefinite program that attains the global minimum of the spectral-norm bound at
that gain. Residual-aware bounds turn this spectral-norm bound into a certified closed Lyapunov
sublevel set, accounting for the floating-point residual of the underlying Lyapunov solve. Under
our stated assumptions, every accumulation point of the idealized outer search is box-Clarke
stationary. A finite run instead returns the best candidate it can independently verify, and we
do not claim global optimality for the gain search.

On a planar quintic system, optimizing the gain increases the certified area by a factor of
$2.238$ over a matched zero-gain gauge. Across a benchmark of 16 heterogeneous polynomial
systems with stabilizer freedom, \textsc{ODQ} improves on both fixed-gain lifted baselines. All
36 \textsc{ODQ} runs on the relay benchmark complete, and \textsc{ODQ} is favored in all 27
repeat-level comparisons across the nine fully paired relay cases against a sum-of-squares
baseline with a fixed quadratic Lyapunov function, in both the fixed-direction proxy and
construction time. Broader comparisons with direct sum-of-squares methods remain mixed.
\end{abstract}

\begin{keywords}
polynomial systems, quadratization, region of attraction, Lyapunov functions,
semidefinite programming
\end{keywords}

\begin{MSCcodes}
93D05, 93D20, 90C22, 93D30
\end{MSCcodes}

\section{Introduction}
\label{sec:intro}

The \gls{roa} of an asymptotically stable equilibrium is the set of initial
states from which trajectories return to it.
Exact quadratization represents a polynomial system on an invariant
lifting manifold of a quadratic system. Even with the lifting map fixed, any vector-valued
polynomial of total degree at most two that vanishes on this manifold may be added to the
lifted right-hand side \cite[Ex.~2]{cai2024dissipative}. These additions preserve the original
trajectories from compatible lifted initial conditions, while they can change transverse
stability and the resulting Lyapunov certificate. We treat this off-manifold extension as a
design variable for \gls{roa} estimation.

\gls{roa} estimates are required for post-fault screening
in power systems \cite{chiang1988stability} and safe exploration in learning-based control
\cite{berkenkamp2017safe}. A closed-form description of the \gls{roa}
is generally unavailable for nonlinear systems, and the standard
substitute is a certified inner approximation: a sublevel set of a Lyapunov function, together
with a proof that this sublevel set is contained in the \gls{roa}. For polynomial vector fields this
construction has been carried out by convex optimization for two decades, through sum-of-squares
(SOS) programming \cite{parrilo2000structured,papachristodoulou2002construction}, the $V$--$s$
iterations and shape-function enlargements built on it
\cite{jarviswloszek2003controls,tan2008stability,topcu2008local}, and cheaper relaxations of the
SOS cone \cite{ahmadi2019dsos}.

These programs work over a monomial basis whose size grows combinatorially in the state dimension
and the polynomial degree, and the effect is visible in the size of the published examples: many
illustrative \gls{roa} studies for polynomial systems use two state variables
\cite{papachristodoulou2005recast,levin1994analytical,pitarch2013closed}, occasionally three
\cite{najafi2016fast,zarei2018arc}. \emph{Quadratization} trades that degree for
dimension. Adjoining auxiliary variables $\vecy=\vecg(\vecx)$ makes the augmented state
$\vecw=(\vecx,\vecy)\in\R^N$ obey an exactly quadratic system $\dot\vecw=A\vecw+H(\vecw\kron\vecw)$. Such a lift always
exists \cite{carleman1932,kerner1981universal}, modern algorithms produce monomial lifts with
provably few auxiliary variables \cite{bychkov2021optimal}, and
for a quadratic system with a Hurwitz linear part, a quadratic
Lyapunov function yields an ellipsoidal certificate from a closed-form radius or a single
semidefinite program \cite{tesi1996stability,kramer2021stability}. The same quadratic form recurs
across otherwise unrelated settings: it is the working class of lift-and-learn
\cite{qian2020lift,kramer2019lifting} and of operator inference
\cite{peherstorfer2016data,kramer2024learning}, it is the entry point of Carleman linearization for
reachability analysis \cite{forets2021reachability}, and bimolecular mass-action
chemical kinetics is quadratic outright \cite{feinberg1987chemical}.

Quadratizations of the same
system differ in structure and in how many variables they introduce
\cite{alauddin2020quadratization}. Every
polynomial system with a dissipative equilibrium admits a \emph{dissipative} quadratization,
obtained by doubling a stabilizer gain until the lifted origin passes a Routh--Hurwitz test
\cite[Thm.~1, Alg.~2, Prop.~3]{cai2024dissipative}. There is also a representation gauge:
adding a matrix $K$ satisfying $K(\mathbf w\kron\mathbf w)=0$ leaves the quadratic vector field
unchanged but can alter the spectral-norm bound.  This gauge has been optimized for a fixed pair
$(A,H)$ \citep{kramer2021stability}.  The two choices are coupled at the certificate level, since
the stabilizer gain changes $A$ and $H$, while $A$ determines the Lyapunov metric in which the
gauge is evaluated.

We characterize the full affine space of admissible quadratic
extensions for a fixed embedding and restrict the optimization to the stabilizer family.
For a fixed monomial inner-quadratic lift, reference extension,
prescribed stabilizer factorizations, and Lyapunov
weight $Q=I_N$, we optimize the diagonal stabilizer gains $\Lambda$ and the Kronecker
representation coefficients $\mu$. The gains determine the lifted dynamics, while the
representation coefficients determine the spectral-norm bound used in the certificate. Over a
prescribed compact box $B$ contained in the Hurwitz set,
we formulate optimal dissipative quadratization
(\textsc{ODQ}) with optimal value
$\phi_B:=\min_{\Lambda\in B}\min_{\mu\in\R^d}\norm{J(\Lambda,\mu)}$,
where $d=N^2(N-1)/2$. Its reciprocal
$\rho_B:=\phi_B^{-1}$ is the optimal Lyapunov-level threshold within this family and box,
with $1/0:=+\infty$. For fixed $\Lambda$, the gauge problem is
an exact convex \gls{sdp}; the outer value function is generally nonsmooth and nonconvex.

\paragraph{Contributions and organization}
Within this fixed family, we first derive a tangent--transverse spectral splitting that reduces
Hurwitz feasibility to the auxiliary block.  We prove that the outer value is locally Lipschitz and
attains its minimum on every admissible box, and give an example for which the unrestricted
threshold is unbounded.  We then derive a residual-aware rule for reporting a strict closed
certificate and show that, when $\phi_B>0$ and $Q=I_N$, threshold maximization orders the leading
coefficients of scaled small-radius pullback volumes.  For computation, we remove gauge directions
invisible to the certificate and obtain an exact fixed-gain \gls{sdp} with $q=N(N^2-1)/3$
effective gauge coordinates.  Analytic sensitivities at regular points are combined with a
box-constrained gradient-sampling L-BFGS method.  Under the stated assumptions, every accumulation
point of the idealized sequence is box-Clarke stationary; finite runs instead return the best
independently screened candidate encountered, and we do not claim
global optimality of the outer gain search.

The numerical study comprises a planar quintic, a heterogeneous suite of 19 polynomial systems,
and a 12-system single-link relay grid.  On the planar example, gain optimization increases the
certified area by a factor of $2.238$ over the matched zero-gain gauge; on all 16 heterogeneous
systems with nontrivial stabilizer freedom, \textsc{ODQ} improves the reported size metric over
both fixed-gain lifted baselines.  Direct \gls{sos} methods with
quadratic or quartic Lyapunov functions give a
structure-dependent size--time tradeoff rather than uniform dominance.  All 36 \textsc{ODQ} relay
runs completed, and each of the nine fully paired \textsc{sos}-2 cells favors \textsc{ODQ} in both
proxy and time.  Within the tested dimensions, the favorable cases are higher-degree,
higher-dimensional systems whose sparse nonlinear structure admits a compact quadratic lift.  Section~\ref{sec:prelim} gives the required background,
Section~\ref{sec:formulation} develops the certificate problem,
Section~\ref{sec:alg} presents the algorithm, and Section~\ref{sec:num} reports the experiments.

\section{Preliminaries}
\label{sec:prelim}

\subsection{Notation}
\label{subsec:notation}
Vectors are set in bold and their scalar entries in plain type: $\mathbf x=(x_1,\dots,x_n)$, and the
origin of $\R^n$ is $\mathbf 0$. Matrices are plain capitals ($A$, $H$, $P$), and scalars are plain
lowercase. We write $\R$ for the real numbers and $\R[\mathbf x]$ for the ring of polynomials in
$\mathbf x$, with $\R[\mathbf x]^n$ its $n$-vectors. For a matrix $M$, $M^{\!\top}$ is the transpose,
$\norm{M}$ the spectral norm (its largest singular value), and $\eig(M)$ is the spectrum; for a vector,
$\norm{\cdot}$ is the Euclidean norm. We index eigenvalues by decreasing real part,
$\Real\lambda_1(M)\ge\cdots\ge\Real\lambda_N(M)$, so $M$ is \emph{Hurwitz} precisely when
$\Real\lambda_1(M)<0$; the quantity $-\Real\lambda_1(M)>0$ is then its \emph{stability margin}.
$\Sset^N$ denotes the symmetric $N\times N$ matrices, $P\succ0$ ($P\succeq0$) positive
(semi)definiteness, and $\preceq$ the Loewner order. The Jacobian of a differentiable map
$\mathbf f$ is $J_x(\mathbf f)=[\partial f_i/\partial x_j]$. We use $\kron$ for the Kronecker product
and write $\mathbf w^{\kron j}=\mathbf w\kron\cdots\kron\mathbf w$ ($j$ factors) for Kronecker
powers; $I_k$ is the $k\times k$ identity and $\mathbf e_k$ the $k$-th standard basis vector. A
homogeneous quadratic vector field on $\R^N$ is written $H(\mathbf w\kron\mathbf w)$ with $H\in\R^{N\times N^2}$,
so that its $r$-th component is the quadratic form $\mathbf w^{\!\top}H_r\,\mathbf w$, where
$H_r\in\R^{N\times N}$ is the $r$-th row of $H$ reshaped to a matrix.

\subsection{Polynomial systems and dissipative equilibria}
\label{subsec:polynomial_systems}

We study polynomial systems of \glspl{ode},
\begin{equation}\label{eq:sys}
\dot{\mathbf x} = \mathbf p(\mathbf x),\qquad \mathbf x(t)\in\R^n,\quad
\mathbf p=(p_1,\dots,p_n)\in\R[\mathbf x]^n .
\end{equation}
Each component $p_i\in\R[\mathbf x]$ is a polynomial in the state $\mathbf x=(x_1,\dots,x_n)$. Our
interest is in the behaviour of \eqref{eq:sys} near an equilibrium at which it is locally stable.

\begin{definition}[equilibrium; dissipativity \citep{cai2024dissipative}]\label{def:dissip}
A point $\mathbf x^*\in\R^n$ is an \emph{equilibrium} of \eqref{eq:sys} if
$\mathbf p(\mathbf x^*)=\mathbf 0$, and \eqref{eq:sys} is \emph{dissipative at $\mathbf x^*$} if every
eigenvalue of the Jacobian $J_x(\mathbf p)|_{\mathbf x=\mathbf x^*}$ has negative real part.
\end{definition}

We analyse one equilibrium at a time and place it at the origin without loss of generality. The
substitution $\mathbf z=\mathbf x-\mathbf x^*$ takes \eqref{eq:sys} to
$\dot{\mathbf z}=\mathbf p(\mathbf z+\mathbf x^*)$, which is again polynomial, has an equilibrium
at $\mathbf 0$, and has the same Jacobian there. We retain the notation
$\mathbf x$ and $\mathbf p$ for the translated coordinates and vector field, so
$\mathbf p(\mathbf 0)=\mathbf 0$.

\begin{definition}[dissipativity margin]\label{def:margin}
The \emph{dissipativity margin} of the origin is
\begin{equation}\label{eq:alphap}
\alpha_p \;:=\; -\Real\lambda_1\big(J_x(\mathbf p)|_{\mathbf 0}\big),
\end{equation}
so that \eqref{eq:sys} is dissipative at $\mathbf 0$ precisely when $\alpha_p>0$.
\end{definition}

Our analysis assumes that the original system \eqref{eq:sys}
is dissipative at the origin, equivalently, $\alpha_p>0$, since exact quadratization
preserves the original dynamics and cannot alter the local stability of this equilibrium.
By Lyapunov's indirect method, for every rate
$a<\alpha_p$ there exist a neighbourhood of the origin and a constant $C\ge1$ such that
$\norm{\mathbf x(t)}\le C\,e^{-at}\norm{\mathbf x(0)}$ for all $t\ge0$. Trajectories therefore relax on
the asymptotic time scale $1/\alpha_p$ and the constant $C$ absorbs nonnormal transient amplification,
which $\alpha_p$ does not bound. Geometrically, $-\alpha_p$ is the spectral abscissa of
$J_x(\mathbf p)|_{\mathbf 0}$, so the spectrum lies at least $\alpha_p$ to the left of the
imaginary axis and $\alpha_p$ is its distance to the stability boundary. Thus, the margin is intrinsic
to \eqref{eq:sys} and fixed before any lift is chosen.

\subsection{Quadratization, stabilizers, and the quadratic form}
\label{subsec:quadratization}

A quadratic right-hand side permits closed-form Lyapunov bounds and convex certificate
optimization. For a fixed Hurwitz linear part and a fixed positive definite Lyapunov weight,
the analytic ellipsoidal \gls{roa} radius scales as $1/\norm{H}$ when $H\ne0$ and
is inexpensive to evaluate once the Lyapunov equation is solved
\citep[Prop.~3.1 and \S4]{kramer2021stability}. The nonlinear term is represented by a fixed
tensor whose projection can be precomputed, allowing exact evaluation of the projected
nonlinearity in reduced coordinates without interpolation
\citep{kramer2025discovering,qian2020lift}.

Reducing the polynomial degree requires auxiliary state variables. Any polynomial system
admits a finite quadratic extension \citep[Prop.~1]{cai2024dissipative}, with freedom in both
the lifting map and the off-manifold dynamics. Algorithms for \emph{optimal monomial
quadratization} minimize the number of auxiliary variables
\citep{bychkov2021optimal,bychkov2024exact}, whereas \emph{dissipative quadratization} also
requires the lifted equilibrium to have a Hurwitz Jacobian \citep{cai2024dissipative}.
We describe these freedoms below and formulate the gain and gauge optimization in
Section~\ref{sec:formulation}.

\begin{definition}[quadratization \citep{cai2024dissipative}]\label{def:quad}
A \emph{quadratization} of \eqref{eq:sys} consists of new variables $y_i=g_i(\mathbf x)$,
$i=1,\dots,m$, together with polynomial vectors $\mathbf q_1(\mathbf x,\mathbf y)\in\R[\mathbf x,\mathbf y]^n$
and $\mathbf q_2(\mathbf x,\mathbf y)\in\R[\mathbf x,\mathbf y]^m$ of total degree at most two such that
\begin{equation}\label{eq:quadsys}
\dot{\mathbf x} = \mathbf q_1(\mathbf x,\mathbf y),\qquad \dot{\mathbf y} = \mathbf q_2(\mathbf x,\mathbf y)
\end{equation}
holds along $\mathbf y=\mathbf g(\mathbf x)$. It is \emph{monomial quadratization} if every $g_i$ is a monomial, and
its \emph{lifting manifold} is $\M=\{(\mathbf x,\mathbf g(\mathbf x)):\mathbf x\in\R^n\}\subset\R^N$
with $N=n+m$.
\end{definition}

We restrict attention to \emph{monomial} quadratizations with nonconstant auxiliary monomials,
so $\mathbf g(\mathbf 0)=\mathbf 0$ and exactness places the lifted equilibrium at the origin.
With $\mathbf w=(\mathbf x,\mathbf y)\in\R^N$, collecting the linear and
quadratic terms in \eqref{eq:quadsys} gives the lifted vector field
\begin{equation}\label{eq:lifted}
\dot{\mathbf w}=\mathbf F(\mathbf w):=A\mathbf w+H(\mathbf w\kron\mathbf w),
\qquad A\in\R^{N\times N},\quad H\in\R^{N\times N^2}.
\end{equation}
Here $A=J_w(\mathbf F)|_{\mathbf 0}$ is the linearization at the lifted equilibrium, and $H$
is a Kronecker representation of the homogeneous quadratic term.

Reading off the coefficients of $\mathbf q_1,\mathbf q_2$ gives one such pair, written $(A_0,H_0)$ and
called the \emph{zero-gain lift}.
The corresponding field
$\mathbf F_0(\mathbf w):=A_0\mathbf w+H_0(\mathbf w\kron\mathbf w)$ serves as the reference
extension below.
The original dynamics are exactly the
restriction of \eqref{eq:lifted} to the invariant manifold $\M$. Off $\M$, equation \eqref{eq:lifted}
is an \emph{extension} of those dynamics, and the quadratization is free to shape
that extension without touching the system it represents.

\begin{definition}[admissible quadratic extensions]\label{def:extension-space}
Fix the polynomial embedding $\Phi:\R^n\to\R^N$,
$\Phi(\mathbf x)=(\mathbf x,\mathbf g(\mathbf x))$, with image $\M$ and $N=n+m$.
Let $\mathcal P_2:=\R[\mathbf w]_{\le2}$ denote the scalar polynomials of total degree at most
two, and define the \emph{vanishing ideal}
\[
I(\M):=\{r\in\R[\mathbf w]:r(\Phi(\mathbf x))=0\ \text{for all }\mathbf x\in\R^n\}.
\]
The \emph{admissible quadratic extension space} of $\mathbf p$ for the fixed embedding is
\begin{equation}\label{eq:extension-space}
\mathcal E_\Phi(\mathbf p):=
\{\mathbf F\in\mathcal P_2^N:
\mathbf F(\Phi(\mathbf x))=J_x(\Phi)(\mathbf x)\,\mathbf p(\mathbf x)
\ \text{for all }\mathbf x\in\R^n\}.
\end{equation}
Given a reference extension $\mathbf F_0\in\mathcal E_\Phi(\mathbf p)$, this is the affine space
\begin{equation}\label{eq:extension-affine}
\mathcal E_\Phi(\mathbf p)=\mathbf F_0+\mathcal V_\Phi,
\qquad
\mathcal V_\Phi:=I(\M)^N\cap\mathcal P_2^N,
\end{equation}
where $I(\M)^N$ denotes the Cartesian product of $N$ copies of $I(\M)$.
\end{definition}

Indeed, for $\mathbf F\in\mathcal P_2^N$ and the fixed reference $\mathbf F_0$, exactness in
\eqref{eq:extension-space} is equivalent to $\mathbf F-\mathbf F_0$ vanishing componentwise on
$\M$. A basis of $I(\M)\cap\mathcal P_2$ can be computed by
substituting $\mathbf w=\Phi(\mathbf x)$ into a general linear combination of
$1,w_i,w_iw_j$ ($1\le i\le j\le N$) and taking the null space of the resulting coefficient
equations. If $r_1,\dots,r_s$ is such a basis, then
$\{\mathbf e_k r_\ell:1\le k\le N,\ 1\le\ell\le s\}$ is a basis of $\mathcal V_\Phi$.

\begin{example}[Admissible extensions of a scalar cubic system]
\label{ex:extension-space-example}
Consider $\dot x=p(x):=-x-x^3$ and fix $\Phi(x)=(x,x^2)$, so
$\M=\{(x,y):y=x^2\}$. The quadratic vector field
\[
\mathbf F_0(x,y)=\begin{pmatrix}-x-xy\\-2y-2y^2\end{pmatrix}
\]
is an admissible extension because
$\mathbf F_0(x,x^2)=(p(x),2xp(x))^{\!\top}=J_x(\Phi)(x)p(x)$.

To find all such extensions, write
$r=c_0+c_1x+c_2y+c_3x^2+c_4xy+c_5y^2\in\mathcal P_2$. Substitution gives
\[
r(x,x^2)=c_0+c_1x+(c_2+c_3)x^2+c_4x^3+c_5x^4.
\]
This vanishes identically if and only if $c_0=c_1=c_4=c_5=0$ and $c_3=-c_2$.
Thus $I(\M)\cap\mathcal P_2=\operatorname{span}\{h\}$, where $h(x,y)=y-x^2$, and
\[
\begin{aligned}
\mathcal V_\Phi
=\operatorname{span}\left\{\begin{pmatrix}h\\0\end{pmatrix},
\begin{pmatrix}0\\h\end{pmatrix}\right\}, \qquad
\mathcal E_\Phi(p)
=\left\{\mathbf F_0+\begin{pmatrix}a h\\b h\end{pmatrix}:a,b\in\R\right\}.
\end{aligned}
\]
The two parameters independently modify the lifted equations away from $\M$ while preserving
the dynamics on $\M$. The choice $a=0$, $b=-\lambda$ gives the family in
Example~\ref{ex:unbounded-ray}.
\end{example}

\begin{definition}[inner-quadratic lift; stabilizers \citep{cai2024dissipative}]\label{def:stab}
A quadratization is \emph{inner-quadratic} if each $g_i$ factors as $g_i=a_ib_i$ with
$a_i,b_i\in\{x_1,\dots,x_n,y_1,\dots,y_{i-1}\}$. Its $i$-th \emph{stabilizer} is the quadratic
polynomial
\begin{equation}\label{eq:stabdef}
h_i(\mathbf x,\mathbf y) \;:=\; y_i-a_ib_i ,
\end{equation}
which vanishes identically on $\M$ and therefore belongs to
$I(\M)\cap\mathcal P_2$ in Definition~\ref{def:extension-space}.
\end{definition}

Every polynomial system admits an inner-quadratic monomial quadratization
\cite[Prop.~1]{cai2024dissipative}, computable by branch and bound \cite{bychkov2021optimal}.
Since $\mathbf e_kh_i\in\mathcal V_\Phi$ for every $k=1,\dots,N$,
adding $-\lambda_i h_i$ to the $k$-th lifted equation, with $\lambda_i\in\R$, gives another
element of $\mathcal E_\Phi(\mathbf p)$: its restriction to $\M$ is unchanged, while its
off-manifold dynamics may change. Those transverse
dynamics determine the stability of the lifted equilibrium, which makes the stabilizers the
natural design variables.
Dissipative quadratization \cite{cai2024dissipative} exploits exactly this: it replaces $\vecq_2$ by
$\vecq_2-\lambda\vech$ with $\vech=(h_1,\dots,h_m)$ and raises $\lambda$ through $1,2,4,\dots$ until the
lifted origin is dissipative, a search that always terminates \cite[Alg.~2, Prop.~3]{cai2024dissipative}.
It returns the first gain that works.

\subsection{Ellipsoidal Lyapunov certificates}
\label{sec:certs}

Suppose that $A$ in \eqref{eq:lifted} is Hurwitz. For a prescribed $Q\succ0$, let $P\succ0$ be
the unique Lyapunov solution and define $V$ by
\begin{align} \label{eq:lyap}
A^{\!\top}P+PA=-Q, \ \
V(\mathbf w)=\mathbf w^{\!\top}P\mathbf w, \ \
\dot V(\mathbf w)=-\mathbf w^{\!\top}Q\mathbf w
 +2\mathbf w^{\!\top}PH(\mathbf w\kron\mathbf w).
\end{align}

Write $\mathbf q(\mathbf w):=H(\mathbf w\kron\mathbf w)$ for the homogeneous quadratic term.
For symmetric positive definite $P,Q$, define its \emph{intrinsic normalized cubic growth} by
\begin{equation}\label{eq:intrinsic-growth}
\beta(P,Q,\mathbf q):=
\sup_{\mathbf w\ne\mathbf 0}
\frac{2|\mathbf w^{\!\top}P\mathbf q(\mathbf w)|}
{\sqrt{\mathbf w^{\!\top}P\mathbf w}\,\mathbf w^{\!\top}Q\mathbf w}.
\end{equation}
This quantity depends on the quadratic map, independently of its matrix representation.
For the exact Lyapunov solution in \eqref{eq:lyap}, set
$c(\mathbf u):=\mathbf u^{\!\top}P\mathbf q(\mathbf u)$ and define
\begin{equation}\label{eq:rhorad}
\rho_{\mathrm{rad}}
:=\inf_{\substack{\mathbf u^{\!\top}P\mathbf u=1\\ c(\mathbf u)>0}}
\frac{\mathbf u^{\!\top}Q\mathbf u}{2\,c(\mathbf u)}, \quad
\inf\varnothing:=+\infty .
\end{equation}

\begin{proposition}[Ellipsoidal Lyapunov certificate]
\label{prop:ellipsoidal-certificate}
For the exact Lyapunov solution in \eqref{eq:lyap},
$\rho_{\mathrm{rad}}=1/\beta(P,Q,\mathbf q)$ is the supremal $r>0$ for which
$\dot V(\mathbf w)<0$ whenever $0<V(\mathbf w)<r^2$, with $1/0:=+\infty$.
For every $0<\rho<\rho_{\mathrm{rad}}$, the closed ellipsoid
$\mathcal D(P,\rho):=\{\mathbf w\in\R^N:V(\mathbf w)\le\rho^2\}$ is positively invariant and
contained in the \gls{roa} of the origin. For the chosen Kronecker representation $H$,
\begin{equation}\label{eq:rhoan}
\rho_{\mathrm{an}}(A,H,Q):=
\frac{\lambda_{\min}(Q)}{2\norm{H}\sqrt{\norm{P}}}
\le \rho_{\mathrm{rad}},
\qquad 1/0:=+\infty .
\end{equation}
Thus $\mathcal D(P,\rho)$ is certified for every $0<\rho<\rho_{\mathrm{an}}(A,H,Q)$. 

More generally, let $\widehat P=\widehat P^{\!\top}\succ0$ be any computed matrix, put
$\widehat V(\mathbf w):=\mathbf w^{\!\top}\widehat P\mathbf w$, and set
\begin{equation}\label{eq:metric-residual}
R:=A^{\!\top}\widehat P+\widehat PA+Q,
\qquad
\delta:=\norm{Q^{-1/2}RQ^{-1/2}}.
\end{equation}
If $\delta<1$ and $\gamma\ge\beta(\widehat P,Q,\mathbf q)$, then
\begin{equation}\label{eq:metric-residual-decay}
\dot{\widehat V}(\mathbf w)
\le-\bigl(1-\delta-\gamma\sqrt{\widehat V(\mathbf w)}\bigr)
\mathbf w^{\!\top}Q\mathbf w.
\end{equation}
For $\gamma>0$ and $\theta\in(0,1)$, the closed ellipsoid
\begin{equation}\label{eq:metric-safe-ellipsoid}
\mathcal D\!\left(\widehat P,\theta\frac{1-\delta}{\gamma}\right)
=\left\{\mathbf w:\widehat V(\mathbf w)
\le\theta^2\left(\frac{1-\delta}{\gamma}\right)^2\right\}
\end{equation}
is positively invariant, has strict Lyapunov decrease away from the origin, and is contained
in the region of attraction of the origin. If $\gamma=0$, $\widehat V$ is a global Lyapunov
function. Verified upper bounds may replace $\delta$ and $\gamma$, provided the residual bound
is below one.
\end{proposition}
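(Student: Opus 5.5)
The plan is to reduce everything to a one-variable inequality along rays through the origin. I will use homogeneity of $\mathbf q$ throughout: $\mathbf q(s\mathbf u)=s^2\mathbf q(\mathbf u)$.

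\textbf{Exact solution: $\rho_{\mathrm{rad}}=1/\beta$.} Write $\mathbf w=s\mathbf u$ with $\mathbf u^{\!\top}P\mathbf u=1$ and $s=\sqrt{V(\mathbf w)}>0$. Then
\[
\dot V(s\mathbf u)=-s^2\,\mathbf u^{\!\top}Q\mathbf u+2s^3c(\mathbf u),
\]
so $\dot V<0$ holds exactly when $s<\mathbf u^{\!\top}Q\mathbf u/(2c(\mathbf u))$ in the case $c(\mathbf u)>0$, and always when $c(\mathbf u)\le0$. Hence $\dot V<0$ on $0<V<r^2$ if and only if $r\le\rho_{\mathrm{rad}}$, so $\rho_{\mathrm{rad}}$ is the supremal such $r$. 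For the identity $\rho_{\mathrm{rad}}=1/\beta$, note that the quotient in \eqref{eq:intrinsic-growth} is homogeneous of degree zero. Restricted to $\mathbf u^{\!\top}P\mathbf u=1$ it equals $2|c(\mathbf u)|/\mathbf u^{\!\top}Q\mathbf u$. Since $\mathbf q$ is even of degree two, $c(-\mathbf u)=-c(\mathbf u)$, so the supremum of $|c|$ coincides with the supremum over points where $c>0$ (or is $0$). Taking reciprocals, with the convention $1/0=+\infty$, gives the identity.

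\textbf{Invariance and attraction.} For $\rho<\rho_{\mathrm{rad}}$, the ellipsoid $\mathcal D(P,\rho)$ is a compact sublevel set on which $\dot V<0$ away from the origin. It is therefore positively invariant, and by LaSalle, or directly by strict decrease of $V$, trajectories starting in it converge to the origin.

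\textbf{Analytic bound.} I will use Cauchy--Schwarz together with $\norm{\mathbf w\kron\mathbf w}=\norm{\mathbf w}^2$:
\[
2|\mathbf w^{\!\top}PH(\mathbf w\kron\mathbf w)|\le 2\norm{P^{1/2}\mathbf w}\,\norm{P^{1/2}}\,\norm{H}\,\norm{\mathbf w}^2 .
\]
Combining this with $\mathbf w^{\!\top}Q\mathbf w\ge\lambda_{\min}(Q)\norm{\mathbf w}^2$ yields $\beta\le 2\norm{H}\sqrt{\norm P}/\lambda_{\min}(Q)$, and hence $\rho_{\mathrm{an}}\le\rho_{\mathrm{rad}}$.

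\textbf{Residual version.} With $\widehat P$ in place of $P$, I compute
\[
\dot{\widehat V}=\mathbf w^{\!\top}(R-Q)\mathbf w+2\mathbf w^{\!\top}\widehat P\mathbf q(\mathbf w).
\]
The quadratic term is controlled by $\mathbf w^{\!\top}R\mathbf w\le\delta\,\mathbf w^{\!\top}Q\mathbf w$, which follows by substituting $\mathbf w=Q^{-1/2}\mathbf v$. The cubic term is bounded by the definition of $\beta(\widehat P,Q,\mathbf q)\le\gamma$, namely by $\gamma\sqrt{\widehat V}\,\mathbf w^{\!\top}Q\mathbf w$. Together these give \eqref{eq:metric-residual-decay}.

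On the ellipsoid \eqref{eq:metric-safe-ellipsoid} we have $\gamma\sqrt{\widehat V}\le\theta(1-\delta)$, so $\dot{\widehat V}\le-(1-\theta)(1-\delta)\,\mathbf w^{\!\top}Q\mathbf w<0$ for $\mathbf w\ne\mathbf 0$. The same invariance and convergence argument as before then applies. Convergence is in fact exponential, since $\mathbf w^{\!\top}Q\mathbf w\ge c\,\widehat V$. If $\gamma=0$ the decay holds globally and $\widehat V$ is radially unbounded, so it is a global Lyapunov function. Finally, every inequality used is monotone in $\delta$ and $\gamma$, so verified upper bounds may replace them as long as the residual bound stays below one.

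\textbf{Main obstacle.} The step that needs the most care is the sign symmetry in the proof of $\rho_{\mathrm{rad}}=1/\beta$, which relies on oddness of $c$. It is also the edge cases there: the equivalence with the $\inf\varnothing$ convention, and handling the supremum rather than a maximum, which is resolved by compactness of the unit $P$-sphere and continuity of the quotient.
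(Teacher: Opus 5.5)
Your proposal is correct and follows essentially the same route as the paper. Both proofs use the radial decomposition on the unit $P$-sphere, the oddness of $c(\mathbf u)$ to drop the absolute value, Cauchy--Schwarz with $\norm{\mathbf w\kron\mathbf w}=\norm{\mathbf w}^2$ for $\rho_{\mathrm{an}}$, and the bound $\mathbf w^{\!\top}R\mathbf w\le\delta\,\mathbf w^{\!\top}Q\mathbf w$ for the residual version. The differences are minor: you characterize the admissible radii directly as $(0,\rho_{\mathrm{rad}}]$, whereas the paper exhibits a maximizing direction with $\dot V=0$ at $t=1/\beta$; the paper also obtains the exact-case invariance as the special case $\widehat P=P$, $\delta=0$, $\gamma=\beta$, $\theta=\rho\beta$, and it states global existence of solutions on the compact invariant set explicitly, a routine step you should add.
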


\begin{proof}
See the \hyperref[proof:ellipsoidal-certificate]{proof of Proposition~\ref*{prop:ellipsoidal-certificate}}
in Appendix~\ref{app:ellipsoidal-certificate} (p.~\pageref{proof:ellipsoidal-certificate}).
\end{proof}

The bound \eqref{eq:rhoan} is the $E=I_N$ case of \citep[Prop.~3.1]{kramer2021stability}. For fixed
$P$ and $Q$, $\rho_{\mathrm{rad}}$ depends only on the quadratic vector field, whereas
$\rho_{\mathrm{an}}$ depends on its Kronecker representation; the gauge-optimized bound is
introduced after the dynamics-preserving gauge.

\section{Problem formulation}
\label{sec:formulation}

Under the standing assumptions of Section~\ref{sec:prelim}, fix an inner-quadratic monomial
lift $\mathbf g$ with $\deg g_i\ge2$.
The original system $\mathbf p$, embedding $\Phi(\mathbf x)=(\mathbf x,\mathbf g(\mathbf x))$,
and lifting manifold $\M$ remain fixed throughout the optimization.
Every admissible extension has the form \eqref{eq:lifted}, with a homogeneous quadratic term
and no constant term.

For this fixed embedding, we first select an off-manifold extension and then a matrix
representation of its quadratic term. The design variables are the stabilizer gains
$\Lambda\in\R^m$ and the representation coefficients $\mu\in\R^d$, with
$d=N^2(N-1)/2$. Fixing the lift keeps the state dimension and manifold unchanged;
fixing the reference extension and stabilizer factorizations specifies the affine family
below. We also fix the Lyapunov weight $Q=I_N$ to isolate gain and representation design.
For each extension with $A(\Lambda)$ Hurwitz, the certificate matrix
$P(\Lambda)\in\Sset^N$, $P(\Lambda)\succ0$, is determined by the Lyapunov equation,
rather than optimized independently.
Figure~\ref{fig:problem-flow} in Appendix~\ref{app:problem-structure} shows how these fixed
data and the two design variables determine the certificate and the optimization problem.

\subsection{The affine family of dissipative lifts}
\label{sec:family}

Fix the reference extension $\mathbf F_0=(\mathbf q_1,\mathbf q_2)$ represented by
$(A_0,H_0)$ and one factorization $g_i=a_ib_i$ for each stabilizer $h_i=y_i-a_ib_i$.
These data are computed once using the implementation of \textsc{DQbee} \citep{cai2024dissipative}.
With these data fixed, Definition~\ref{def:extension-space} gives the admissible extension family
$\mathcal E_\Phi(\mathbf p)=\mathbf F_0+\mathcal V_\Phi$, where
$\mathcal V_\Phi=I(\M)^N\cap\mathcal P_2^N$.

We restrict the extension freedom to the stabilizer subspace
\begin{equation}\label{eq:stabilizer-subspace}
\mathcal S_h:=\operatorname{span}\{\mathbf e_{n+i}h_i:1\le i\le m\}
\subseteq\mathcal V_\Phi,
\qquad
\mathbf F_\Lambda:=\mathbf F_0-\sum_{i=1}^m\lambda_i\mathbf e_{n+i}h_i.
\end{equation}
Thus $\{\mathbf F_\Lambda:\Lambda\in\R^m\}=\mathbf F_0+\mathcal S_h$ is an
$m$-dimensional affine subspace of $\mathcal E_\Phi(\mathbf p)$.
The restriction gives one gain per auxiliary equation and the transverse matrix
$D(\Lambda)=W-\diag(\Lambda)$ established in \cref{lem:split}. Directions in
$\mathcal V_\Phi\setminus\mathcal S_h$, including those that modify the physical equations
off $\M$, are excluded from the present family.
\hyperref[proof:extension-space]{Appendix~\ref*{app:extension-gauge}}
(p.~\pageref{proof:extension-space}) gives the coefficient-matching construction of a basis
of $\mathcal V_\Phi$ and verifies the inclusion and dimension stated above.

Writing $\Lambda=(\lambda_1,\dots,\lambda_m)$, the selected extension is

\begin{equation}\label{eq:gainsys}
\dot{\mathbf x} = \mathbf q_1(\mathbf x,\mathbf y),\qquad
\dot y_i = q_{2,i}(\mathbf x,\mathbf y)-\lambda_i\,h_i(\mathbf x,\mathbf y),\quad i=1,\dots,m .
\end{equation}
Since $h_i\equiv0$ on $\M$, system \eqref{eq:gainsys} is a valid quadratization of \eqref{eq:sys} for every $\Lambda$. Split the stabilizer \eqref{eq:stabdef} into linear and quadratic parts, $-\lambda_ih_i=-\lambda_iy_i+\lambda_ia_ib_i$. The term $-\lambda_iy_i$ enters the linear part and $+\lambda_ia_ib_i$ the quadratic part, so
\begin{equation}\label{eq:AH}
\Aof \;=\; A_0-\sum_{i=1}^m\lambda_i\,\mathbf e_{n+i}\mathbf e_{n+i}^{\!\top},
\qquad
H(\Lambda) \;=\; H_0+\sum_{i=1}^m\lambda_i\,\mathbf e_{n+i}\,s_i^{\!\top},
\end{equation}
with $s_i\in\R^{N^2}$ the Kronecker encoding of $a_ib_i$. Both maps are affine in $\Lambda$. Each
gain subtracts $\lambda_i$ from one auxiliary diagonal entry of $A$ and modifies one row of $H$.

The chart $\mathbf z=\mathbf y-\mathbf g(\mathbf x)$ straightens $\M$ to
$\{\mathbf z=\mathbf 0\}$. Since every auxiliary monomial has degree at least two,
$J_x(\mathbf g)|_{\mathbf 0}=0$, so this change of coordinates has identity derivative at the
origin. The resulting tangent--transverse decomposition is the vector-gain specialization of
\citep[Eq.~(9)]{cai2024dissipative}.

\begin{lemma}[tangent--transverse spectrum splitting]\label{lem:split}
Set $J_p:=J_x(\mathbf p)|_{\mathbf 0}\in\R^{n\times n}$,
$R:=J_y(\mathbf q_1)|_{(\mathbf 0,\mathbf 0)}\in\R^{n\times m}$,
and $W:=J_y(\mathbf q_2)|_{(\mathbf 0,\mathbf 0)}\in\R^{m\times m}$.
Then, for every $\Lambda\in\R^m$,
\begin{equation}\label{eq:block}
\Aof=
\begin{bmatrix}
J_p & R\\[2pt]
0 & \Dof
\end{bmatrix},
\qquad
\Dof:=W-\diag(\Lambda).
\end{equation}
Consequently,
\begin{equation}\label{eq:split}
\eig(\Aof)=\eig(J_p)\cup\eig(\Dof),
\end{equation}
with algebraic multiplicities. Under the standing assumption $\alpha_p>0$, $\Aof$ is Hurwitz if
and only if $\Dof$ is Hurwitz. Moreover, its spectral margin satisfies
\begin{equation}\label{eq:margin}
\alpha(\Lambda):=-\Real\lambda_1(\Aof)
=\min\big\{\alpha_p,\alpha_{\mathrm{aux}}(\Lambda)\big\},
\qquad
\alpha_{\mathrm{aux}}(\Lambda):=-\Real\lambda_1(\Dof).
\end{equation}
When $m=0$, $A=J_p$ and we set
$\alpha_{\mathrm{aux}}=+\infty$.
Thus the gains leave the original spectrum fixed and cannot raise the lifted margin above
$\alpha_p$.
\end{lemma}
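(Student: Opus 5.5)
The plan is to compute the Jacobian of the gain-augmented field $\mathbf F_\Lambda$ in \eqref{eq:gainsys} at the origin and read off its block structure; the spectral statements then follow from standard facts about block upper-triangular matrices. By \eqref{eq:AH}, $\Aof=A_0-\sum_i\lambda_i\mathbf e_{n+i}\mathbf e_{n+i}^{\!\top}$, and $A_0=J_w(\mathbf F_0)|_{\mathbf 0}$. I will therefore write $A_0$ in $(\mathbf x,\mathbf y)$ blocks as
\[
\begin{bmatrix} J_x(\mathbf q_1) & J_y(\mathbf q_1)\\ J_x(\mathbf q_2) & J_y(\mathbf q_2)\end{bmatrix}_{(\mathbf 0,\mathbf 0)} .
\]
The stabilizer correction only touches the $(2,2)$ block, contributing $-\diag(\Lambda)$. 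The top-right block is $R$ by definition, and the bottom-right block becomes $W-\diag(\Lambda)=\Dof$.

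The substantive step is identifying the left column, that is, showing $J_x(\mathbf q_1)|_{\mathbf 0}=J_p$ and $J_x(\mathbf q_2)|_{\mathbf 0}=0$. I will use exactness on $\M$. Write $\mathbf F_0(\Phi(\mathbf x))=J_x(\Phi)(\mathbf x)\,\mathbf p(\mathbf x)$, which componentwise reads $\mathbf q_1(\mathbf x,\mathbf g(\mathbf x))=\mathbf p(\mathbf x)$ and $\mathbf q_2(\mathbf x,\mathbf g(\mathbf x))=J_x(\mathbf g)(\mathbf x)\mathbf p(\mathbf x)$. Differentiate both identities in $\mathbf x$ at $\mathbf 0$ by the chain rule, using $J_x(\mathbf g)|_{\mathbf 0}=0$ (each $g_i$ is a monomial of degree at least two) and $\mathbf p(\mathbf 0)=\mathbf 0$.
\begin{itemize}
\item The first identity gives $J_x(\mathbf q_1)+J_y(\mathbf q_1)\cdot 0=J_p$.
\item The second gives $J_x(\mathbf q_2)=\frac{d}{d\mathbf x}\bigl[J_x(\mathbf g)\mathbf p\bigr]_{\mathbf 0}$. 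By the product rule, every term carries either a factor $J_x(\mathbf g)|_{\mathbf 0}=0$ or a factor $\mathbf p(\mathbf 0)=0$. More simply, $J_x(\mathbf g)\mathbf p$ has components of order at least two at $\mathbf 0$, so its derivative there vanishes.
\end{itemize}
This yields \eqref{eq:block} for every $\Lambda$, since the gain does not affect the left column. This chain-rule step is the only point requiring care: one must use the fixed-reference exactness rather than any particular form of $\mathbf q_2$. I expect it to be the main, though mild, obstacle.

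With \eqref{eq:block} in hand, I will use $\det(sI-\Aof)=\det(sI_n-J_p)\det(sI_m-\Dof)$ for block triangular matrices. This gives \eqref{eq:split} with algebraic multiplicities. The maximum real part over a union of spectra is the maximum of the two maxima, so $\Real\lambda_1(\Aof)=\max\{-\alpha_p,-\alpha_{\mathrm{aux}}(\Lambda)\}$, which is \eqref{eq:margin}. Since $\alpha_p>0$ by assumption, $\alpha(\Lambda)>0$ holds if and only if $\alpha_{\mathrm{aux}}(\Lambda)>0$, which is the Hurwitz equivalence. The bound $\alpha(\Lambda)\le\alpha_p$ is immediate from the minimum. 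When $m=0$ there is no $\mathbf y$ block, so $A=J_p$, and the convention $\alpha_{\mathrm{aux}}=+\infty$ keeps the formula valid.
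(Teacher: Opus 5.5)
Your proposal is correct and follows essentially the same route as the paper: differentiate the exactness identity at the origin, using $\mathbf p(\mathbf 0)=\mathbf 0$ and $J_x(\mathbf g)|_{\mathbf 0}=0$, to get the left block column $[J_p;0]$, read off $R$ and $W-\diag(\Lambda)$ in the right column, and factor the characteristic polynomial over the diagonal blocks. The only cosmetic difference is that the paper applies exactness directly to $\mathbf F_\Lambda$ and notes $J_y(\mathbf h)|_{(\mathbf 0,\mathbf 0)}=I_m$, whereas you apply it to $\mathbf F_0$ and invoke \eqref{eq:AH} for the gain correction; the two are equivalent.
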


\begin{proof}
See the \hyperref[proof:split]{proof of Lemma~\ref*{lem:split}}
in Appendix~\ref{app:transverse-feasibility} (p.~\pageref{proof:split}).
\end{proof}

\begin{corollary}[closed-form transverse feasibility and saturation]\label{cor:sat}
Assume $m\ge1$ and let $\mathbf 1=(1,\dots,1)\in\R^m$. For every $\Lambda\in\R^m$ and $c\in\R$,
\begin{equation}\label{eq:gain-shift}
D(\Lambda+c\mathbf 1)=D(\Lambda)-cI_m,
\qquad
\alpha_{\mathrm{aux}}(\Lambda+c\mathbf 1)=\alpha_{\mathrm{aux}}(\Lambda)+c.
\end{equation}
In particular, $\alpha_{\mathrm{aux}}(\lambda\mathbf 1)=\lambda-\Real\lambda_1(W)$, so for every
$\varepsilon>0$,
\begin{equation}\label{eq:uniform-feasibility}
\alpha_{\mathrm{aux}}(\lambda\mathbf 1)\ge\varepsilon
\quad\Longleftrightarrow\quad
\lambda\ge\Real\lambda_1(W)+\varepsilon.
\end{equation}
The full lifted margin satisfies $\alpha(\lambda\mathbf 1)\ge\varepsilon$ if and only if
both $\lambda\ge\Real\lambda_1(W)+\varepsilon$ and $\alpha_p\ge\varepsilon$ hold. Under the standing
assumption $\alpha_p>0$, the unrestricted set
$\{\Lambda\in\R^m:\Aof\text{ is Hurwitz}\}$ is therefore nonempty, and
\begin{equation}\label{eq:uniform-saturation}
\alpha(\lambda\mathbf 1)=\alpha_p
\qquad\text{whenever}\qquad
\lambda\ge\Real\lambda_1(W)+\alpha_p.
\end{equation}
Let $\Delta H:=\sum_{i=1}^m\mathbf e_{n+i}s_i^{\!\top}$. Then
$H(\lambda\mathbf 1)=H_0+\lambda\Delta H$ and $\norm{H(\lambda\mathbf 1)}=O(\lambda)$ as
$\lambda\to\infty$; if $\Delta H\ne0$, then
$\norm{H(\lambda\mathbf 1)}/\lambda\to\norm{\Delta H}$.
\end{corollary}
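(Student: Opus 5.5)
The plan is to read everything off the block structure in \cref{lem:split} together with the affine formulas \eqref{eq:AH}; no new estimates are needed.

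\emph{Gain shift.} Since $D(\Lambda)=W-\diag(\Lambda)$ and $\diag(\Lambda+c\mathbf 1)=\diag(\Lambda)+cI_m$, we get $D(\Lambda+c\mathbf 1)=D(\Lambda)-cI_m$ directly. Subtracting $cI_m$ translates every eigenvalue by $-c$ while keeping algebraic multiplicities. Hence $\Real\lambda_1(D(\Lambda+c\mathbf 1))=\Real\lambda_1(D(\Lambda))-c$, which gives the second identity in \eqref{eq:gain-shift}. Taking $\Lambda=\mathbf 0$, $c=\lambda$ and using $D(\mathbf 0)=W$ yields $\alpha_{\mathrm{aux}}(\lambda\mathbf 1)=\lambda-\Real\lambda_1(W)$. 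The equivalence \eqref{eq:uniform-feasibility} is then just a rearrangement of $\lambda-\Real\lambda_1(W)\ge\varepsilon$.

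\emph{Full lifted margin.} Apply \eqref{eq:margin}: $\alpha(\lambda\mathbf 1)=\min\{\alpha_p,\alpha_{\mathrm{aux}}(\lambda\mathbf 1)\}$. A minimum is $\ge\varepsilon$ exactly when both arguments are, which gives the stated biconditional. For nonemptiness of the Hurwitz set, pick $\varepsilon=\alpha_p>0$ and $\lambda=\Real\lambda_1(W)+\alpha_p$. Then $\alpha(\lambda\mathbf 1)\ge\alpha_p>0$, so $A(\lambda\mathbf 1)$ is Hurwitz. For saturation \eqref{eq:uniform-saturation}, the same choice shows $\alpha_{\mathrm{aux}}\ge\alpha_p$, so the minimum equals $\alpha_p$. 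The upper bound $\alpha\le\alpha_p$ comes for free from the min.

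\emph{Growth of $H$.} Substituting $\lambda_i=\lambda$ in \eqref{eq:AH} gives $H(\lambda\mathbf 1)=H_0+\lambda\Delta H$. By the triangle inequality, $\norm{H(\lambda\mathbf 1)}\le\norm{H_0}+\lambda\norm{\Delta H}=O(\lambda)$ as $\lambda\to\infty$. If $\Delta H\ne0$, the reverse triangle inequality gives
\[
\bigl|\norm{H(\lambda\mathbf 1)}/\lambda-\norm{\Delta H}\bigr|\le\norm{H_0}/\lambda\to0.
\]

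There is no real obstacle here. The only point needing care is the eigenvalue-shift step, namely that the spectral abscissa moves exactly by $c$, including multiplicities. This is immediate from $\det(D-cI-zI)=\det(D-(z+c)I)$. The case $m=0$ is excluded by the hypothesis $m\ge1$.
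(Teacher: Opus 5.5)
Your proposal is correct and follows the paper's proof essentially step for step: the identity $D(\Lambda+c\mathbf 1)=D(\Lambda)-cI_m$ with the resulting spectral translation, specialization at $\Lambda=\mathbf 0$, the min formula for $\alpha$ from \cref{lem:split}, and the affine formula for $H$ from \eqref{eq:AH}. Your reverse-triangle bound $\bigl|\norm{H(\lambda\mathbf 1)}/\lambda-\norm{\Delta H}\bigr|\le\norm{H_0}/\lambda$ is simply an explicit version of the paper's appeal to continuity of the spectral norm.
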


\begin{proof}
See the \hyperref[proof:sat]{proof of Corollary~\ref*{cor:sat}}
in Appendix~\ref{app:transverse-feasibility} (p.~\pageref{proof:sat}).
\end{proof}

The bound \eqref{eq:uniform-feasibility} provides an explicit uniform-gain initializer whenever
the resulting vector lies in the prescribed gain box. Although the unrestricted family is
feasible, its intersection with a prescribed box can be empty.

Thus feasibility alone does not select a useful gain: once the lifted margin reaches $\alpha_p$,
increasing $\lambda$ continues to change the quadratic term without improving the linear margin.

\begin{remark}[Factorization freedom]\label{rem:factorization}
The stabilizers of a fixed monomial lift need not be unique. Different factorizations into
earlier coordinates give stabilizers with the same linear part, all vanishing on $\M$.
For example, for $\Phi(x)=(x,x^2,x^3,x^4)$, both $h_3=y_3-y_1^2$ and
$\widetilde h_3=y_3-xy_2$ are valid, with $h_3-\widetilde h_3=xy_2-y_1^2$.
With the reference extension and $\Lambda$ fixed, changing the factorization can alter
$H(\Lambda)$ while leaving $A(\Lambda)$ unchanged. When $A(\Lambda)$ is Hurwitz, its
Lyapunov solution is therefore unchanged as well. We keep the stabilizer factorizations
fixed throughout the optimization.
\end{remark}

After fixing $\Lambda$, a separate freedom remains in representing the same quadratic map.
Let $\mathcal K\subset\R^{N\times N^2}$ be the space of matrices satisfying
$K(\mathbf w\kron\mathbf w)=0$ for every $\mathbf w\in\R^N$.
Two matrices represent the same quadratic map if and only if their difference belongs to
$\mathcal K$. Thus $H(\Lambda)+\mathcal K$ contains all Kronecker representations of the
quadratic term of $\mathbf F_\Lambda$. The space has dimension $d=\tfrac12N^2(N-1)$.
See the \hyperref[proof:representation-space]{representation-space derivation}
in Appendix~\ref{app:representation-space} (p.~\pageref{proof:representation-space}).
For a fixed basis $\{K_k\}_{k=1}^{d}$, write

\begin{equation}\label{eq:Hmu}
\Hof:=H(\Lambda)+K(\mu),\qquad
K(\mu):=\sum_{k=1}^{d}\mu_kK_k,\qquad \mu\in\R^d.
\end{equation}

Unlike the extension directions in $\mathcal V_\Phi$, which need only vanish on $\M$, the
gauge contributes the zero vector field on all of $\R^N$. Varying $\mu$ therefore leaves
$\mathbf F_\Lambda$, $A(\Lambda)$, and every trajectory unchanged, while it can change the
matrix norm used to bound Lyapunov growth. We optimize over all $\mu\in\R^d$ in this exact
representation class. The certificate construction below specifies which norm is minimized.

\subsection{The certificate and the optimization problem}
\label{sec:certmap}

Fix an extension $\mathbf F_\Lambda$ with $\Aof$ Hurwitz. Its certificate is constructed from
the quadratic Lyapunov function of Section~\ref{sec:certs} and the spectral-norm bound below.
The choice $Q=I_N$ fixes the Lyapunov weight while the gains and representation vary.
The construction is the $Q=I_N$ specialization of
\citep[Thm.~4.1 and Eqs.~(4.8)--(4.12)]{kramer2021stability}, building on
\citep{tesi1996stability}. Let $P=P(\Lambda)\succ0$ solve
\begin{equation}\label{eq:lyapL}
\Aof^{\!\top}P+P\Aof=-I_N,
\end{equation}
and factor $P=P_f^{\!\top}P_f$. Since $A$ is gauge-independent, $P$ depends on $\Lambda$ but not
on $\mu$.

\begin{remark}[Choice of Lyapunov weight]\label{rem:Q}
We fix $Q=I_N$ to isolate the lift and gauge design. A scalar rescaling is redundant: replacing
$Q$ by $sQ$ replaces $P$ by $sP$ and only rescales the level parameter of the same ellipsoids.
The normalized shape of $Q$ is a genuine design variable. For $Q=Q_f^{\!\top}Q_f$, the general
certificate replaces $J$ below by
$J_Q=(P_f^{\!\top}\kron Q_f^{\!\top})^{-1}GQ_f^{-1}$; it could be optimized over, for example,
$Q\succeq\varepsilon I_N$ and $\operatorname{tr}Q=N$. This adds a nonconvex outer variable and is
left open.
\end{remark}

Partition $\Hof$ into blocks $M_i=M_i(\Lambda,\mu)\in\R^{N\times N}$ defined by
$(M_i)_{rj}:=(\Hof)_{r,(i-1)N+j}$, so that
$\Hof(\mathbf w\kron\mathbf w)=\sum_{i=1}^Nw_iM_i\mathbf w$. With $\operatorname{col}$ denoting
vertical stacking, define
\begin{equation}\label{eq:J}
\begin{aligned}
G(\Lambda,\mu)&:=\operatorname{col}_{i=1}^N\!\big(M_i^{\!\top}P+PM_i\big)
\in\R^{N^2\times N}, \\
J(\Lambda,\mu)&:=\big(P_f^{\!\top}\kron I_N\big)^{-1}G(\Lambda,\mu)
\in\R^{N^2\times N}.
\end{aligned}
\end{equation}
The spectral norm of $J$ is independent of the factor: replacing $P_f$ by $UP_f$, with $U$ orthogonal,
replaces $J$ by $(U\kron I_N)J$. For $V(\mathbf w)=\mathbf w^{\!\top}P\mathbf w$,
\begin{align}\label{eq:vdotJ}
\dot V(\mathbf w)
&=\mathbf w^{\!\top}\!\left[-I_N+
(\mathbf w^{\!\top}P_f^{\!\top}\kron I_N)J(\Lambda,\mu)\right]\!\mathbf w \le \norm{\mathbf w}^2
\left[-1+\sqrt{V(\mathbf w)}\,\norm{J(\Lambda,\mu)}\right]\end{align}
Hence $\dot V(\mathbf w)<0$ for every nonzero $\mathbf w$ with $V(\mathbf w)\le\rho^2$ whenever
$0<\rho<1/\norm{J(\Lambda,\mu)}$, with $1/0:=+\infty$;
Proposition~\ref{prop:ellipsoidal-certificate} then certifies the corresponding closed ellipsoid.

\begin{proposition}[Intrinsic growth and spectral-norm relaxation]\label{prop:intrinsic-bound}
Fix symmetric positive definite $P,Q$ and a homogeneous quadratic map
$\mathbf q(\mathbf w)=H(\mathbf w\kron\mathbf w)$.
For each representation $H+K(\mu)$ in \eqref{eq:Hmu}, form $G$ by \eqref{eq:J} and $J_Q$ by
Remark~\ref{rem:Q}. Then
\begin{equation}\label{eq:intrinsic-spectral-bound}
\beta(P,Q,\mathbf q)
\le\gamma_\star:=\min_{\mu\in\R^d}\norm{J_Q(H+K(\mu))}
\le\norm{J_Q(H+K(\mu))}\qquad(\mu\in\R^d).
\end{equation}
The minimum is attained. It is the smallest spectral-norm upper bound in this gauge and
matricization class; gauge optimality does not establish equality with $\beta$.
\end{proposition}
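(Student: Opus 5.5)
The plan is to prove the pointwise bound $\beta(P,Q,\mathbf q)\le\norm{J_Q(H+K(\mu))}$ for every $\mu\in\R^d$ by an exact factorization of $2\mathbf w^{\!\top}P\mathbf q(\mathbf w)$. Attainment of the minimum then follows from a finite-dimensional distance argument, and the two inequalities in \eqref{eq:intrinsic-spectral-bound} follow at once. Two facts are used throughout. First, every representation $H+K(\mu)$ with $K(\mu)\in\mathcal K$ encodes the same map $\mathbf q$. Second, $\beta$ in \eqref{eq:intrinsic-growth} depends only on $\mathbf q$. So it suffices to fix an arbitrary representation, with blocks $M_i$ as in \eqref{eq:J}, and bound $\beta$ by the corresponding $\norm{J_Q}$.

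\emph{Step 1 (exact identity).} From the block partition, $\mathbf q(\mathbf w)=\sum_i w_iM_i\mathbf w$. This gives
\[
(\mathbf w^{\!\top}\kron I_N)G\,\mathbf w=\sum_i w_i\bigl(M_i^{\!\top}P+PM_i\bigr)\mathbf w .
\]
Multiplying on the left by $\mathbf w^{\!\top}$ gives $\mathbf w^{\!\top}(\mathbf w^{\!\top}\kron I_N)G\mathbf w=2\mathbf w^{\!\top}P\mathbf q(\mathbf w)$. Next, insert the definition $J_Q=(P_f^{\!\top}\kron Q_f^{\!\top})^{-1}GQ_f^{-1}$ from Remark~\ref{rem:Q}. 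By the mixed-product rule, $(\mathbf w^{\!\top}P_f^{\!\top}\kron Q_f^{\!\top})(P_f^{\!\top}\kron Q_f^{\!\top})^{-1}=\mathbf w^{\!\top}\kron I_N$, so
\[
(\mathbf w^{\!\top}\kron I_N)G\mathbf w=(\mathbf w^{\!\top}P_f^{\!\top}\kron Q_f^{\!\top})J_QQ_f\mathbf w .
\]
Left-multiplying by $\mathbf w^{\!\top}$ and using $\mathbf w^{\!\top}(\mathbf w^{\!\top}P_f^{\!\top}\kron Q_f^{\!\top})=(P_f\mathbf w\kron Q_f\mathbf w)^{\!\top}$ yields
\[
2\mathbf w^{\!\top}P\mathbf q(\mathbf w)=(P_f\mathbf w\kron Q_f\mathbf w)^{\!\top}J_Q\,(Q_f\mathbf w).
\]
For $Q=I_N$ this reduces to the computation in \eqref{eq:vdotJ}, which serves as a consistency check.

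\emph{Step 2 (bound).} Apply Cauchy--Schwarz together with $\norm{\mathbf a\kron\mathbf b}=\norm{\mathbf a}\norm{\mathbf b}$. This gives
\[
2|\mathbf w^{\!\top}P\mathbf q(\mathbf w)|\le\norm{P_f\mathbf w}\,\norm{Q_f\mathbf w}^2\,\norm{J_Q}=\sqrt{\mathbf w^{\!\top}P\mathbf w}\;\mathbf w^{\!\top}Q\mathbf w\;\norm{J_Q}.
\]
Dividing and taking the supremum over $\mathbf w\ne\mathbf 0$ gives $\beta\le\norm{J_Q(H+K(\mu))}$ for every $\mu$. It remains to show that the infimum over $\mu$ is attained; once it is, $\beta\le\gamma_\star$ follows, and the second inequality holds trivially.

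\emph{Step 3 (attainment).} This is the only point needing care, since $\mu\mapsto J_Q(H+K(\mu))$ need not be injective, and so coercivity in $\mu$ can fail. I would avoid coercivity. Write $J_Q(H+K(\mu))=J_0+\mathcal L(\mu)$, where $J_0:=J_Q(H)$ and $\mathcal L$ is linear, because $G$ is linear in the representation and the pre- and post-factors are fixed. The problem then becomes minimizing the continuous norm $\norm{J_0+X}$ over the finite-dimensional, hence closed, subspace $X\in\operatorname{range}\mathcal L$.
\begin{itemize}
\item Restrict to the compact set $\{X\in\operatorname{range}\mathcal L:\norm{J_0+X}\le\norm{J_0}\}$, which is closed and bounded.
\item A minimizer $X_\star$ exists on this set by continuity.
\item Any preimage $\mu_\star$ with $\mathcal L(\mu_\star)=X_\star$ attains $\gamma_\star$.
\end{itemize}
The closing sentence of the proposition needs no proof beyond noting that Step 2 is only an inequality.
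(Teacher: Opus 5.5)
Your proposal is correct and follows essentially the same route as the paper's proof. Both use the same Kronecker factorization $2\mathbf w^{\!\top}P\mathbf q(\mathbf w)=(P_f\mathbf w\kron Q_f\mathbf w)^{\!\top}J_Q(Q_f\mathbf w)$, the same spectral-norm/Cauchy--Schwarz bound, and the same attainment argument that intersects the closed affine image of $\mu\mapsto J_Q(H+K(\mu))$ with the norm ball of radius $\norm{J_Q(H)}$, so that coercivity in $\mu$ is never needed.
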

\begin{proof}
See the \hyperref[proof:intrinsic-bound]{proof of Proposition~\ref*{prop:intrinsic-bound}}
in Appendix~\ref{app:intrinsic-bound} (p.~\pageref{proof:intrinsic-bound}).
\end{proof}

For a fixed extension, $\beta$ and $P(\Lambda)$ are fixed, so the
inner optimization improves the spectral-norm upper bound alone. We define its optimized
threshold by
\begin{equation}\label{eq:rhostarL}
\rho^\star(\Lambda):=
\left(\min_{\mu\in\R^d}\norm{J(\Lambda,\mu)}\right)^{-1}.
\end{equation}
For fixed $\Lambda$, the certificate map is affine in $\mu$:
$
J(\Lambda,\mu)
=
J(\Lambda,0)+\sum_{k=1}^d\mu_kJ_k(\Lambda)
$,
which implies $\mu\mapsto\norm{J(\Lambda,\mu)}$ is convex. Its minimum is
attained because $\operatorname{span}\{J_k(\Lambda)\}_{k=1}^d$ is a
closed finite-dimensional subspace. Every $\mu$ already
provides a valid threshold; minimizing the norm only enlarges it. Consequently, if $\Aof$ is
Hurwitz, then for every $0<\rho<\rho^\star(\Lambda)$,
$\mathcal D(P(\Lambda),\rho)$ is contained in the \gls{roa} of the lifted system. The value
$\rho^\star(\Lambda)$ is the best threshold furnished by this norm bound and satisfies
$\rho^\star(\Lambda)\le\rho_{\mathrm{rad}}$; equality need not hold. If the minimum in
\eqref{eq:rhostarL} is zero, we set $\rho^\star(\Lambda)=+\infty$; an optimizing gauge then has
$J=0$, and \eqref{eq:vdotJ} gives
$\dot V=-\norm{\mathbf w}^2$, so every finite $\rho$ is certified. Although $H(\Lambda)$ is affine in
$\Lambda$, the dependence across
gains is generally nonconvex because $P(\Lambda)$ and $P_f(\Lambda)$ are obtained from the
Lyapunov equation \eqref{eq:lyapL}.

The lifted certificate pulls back to the original coordinates through
$\Phi(\mathbf x):=(\mathbf x,\mathbf g(\mathbf x))$. Define
\begin{equation}\label{eq:pullback}
\begin{aligned}
S(\Lambda,\rho):=\big\{\mathbf x\in\R^n:\widetilde V_\Lambda(\mathbf x)\le\rho^2\big\}, 
\quad \widetilde V_\Lambda(\mathbf x):=\Phi(\mathbf x)^{\!\top}P(\Lambda)\Phi(\mathbf x).
\end{aligned}
\end{equation}
For $0<\rho<\rho^\star(\Lambda)$, exactness and uniqueness imply that the lifted trajectory from
$\Phi(\mathbf x_0)$ is $\Phi(\mathbf x(t))$ and remains on $\M$. Since the lifted ellipsoid is
positively invariant and contained in the lifted \gls{roa}, this trajectory is global and converges
to the origin. Hence $S(\Lambda,\rho)$ is positively invariant and contained in the \gls{roa} of
\eqref{eq:sys}.

\begin{corollary}[Path-connected pullback]\label{cor:conn}
If $\Aof$ is Hurwitz and $0<\rho<\rho^\star(\Lambda)$, then $S(\Lambda,\rho)$ is path-connected;
its connected component containing the origin is therefore the entire set.
\end{corollary}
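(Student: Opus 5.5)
The plan is to exhibit, for every point $\mathbf x_0\in S(\Lambda,\rho)$, an explicit path inside $S(\Lambda,\rho)$ to the origin. The natural candidate is the trajectory of \eqref{eq:sys} itself. I would proceed in three steps.

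First, take $\mathbf x_0\in S(\Lambda,\rho)$, so $V(\Phi(\mathbf x_0))\le\rho^2$ with $V(\mathbf w)=\mathbf w^{\!\top}P(\Lambda)\mathbf w$. By exactness of the quadratization, the lifted solution from $\Phi(\mathbf x_0)$ is $\Phi(\mathbf x(t))$. Because $0<\rho<\rho^\star(\Lambda)$, the bound \eqref{eq:vdotJ} at an optimizing gauge gives $\dot V<0$ on $\mathcal D(P(\Lambda),\rho)\setminus\{\mathbf 0\}$. Hence $t\mapsto V(\Phi(\mathbf x(t)))$ is nonincreasing. This gives $\widetilde V_\Lambda(\mathbf x(t))\le\rho^2$ for all $t\ge0$, so $\mathbf x(t)\in S(\Lambda,\rho)$. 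The solution exists globally, as already argued in the pullback discussion preceding the corollary.

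Second, by Proposition~\ref{prop:ellipsoidal-certificate}, the ellipsoid lies in the lifted \gls{roa}, so $\Phi(\mathbf x(t))\to\mathbf 0$ and in particular $\mathbf x(t)\to\mathbf 0$. Reparametrize time by $s=t/(1+t)\in[0,1)$ and set the path value at $s=1$ to $\mathbf 0$. The resulting map $\gamma:[0,1]\to\R^n$ is continuous: on $[0,1)$ because solutions are continuous in $t$, and at $s=1$ because $\mathbf x(t)\to\mathbf 0$. Since $\mathbf 0\in S(\Lambda,\rho)$ trivially ($\widetilde V_\Lambda(\mathbf 0)=0$), the whole path lies in $S(\Lambda,\rho)$.

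Third, any two points of $S(\Lambda,\rho)$ are joined by concatenating one such path with the reverse of the other, both passing through the origin. So $S(\Lambda,\rho)$ is path-connected. Since a path-connected set is connected, its component containing the origin is the whole set.

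The only delicate point is the monotonicity step. It needs the decrease inequality on the closed sublevel set including its boundary, which \eqref{eq:vdotJ} supplies, since $\sqrt{V}\le\rho<1/\norm{J}$ there. It also needs global existence, which holds because the lifted trajectory stays in a compact ellipsoid. I do not expect any further obstacle.
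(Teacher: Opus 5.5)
Your proof is correct and follows essentially the same route as the paper. In both, the lifted trajectory from $\Phi(\mathbf x_0)$ stays in the positively invariant closed ellipsoid, coincides with $\Phi(\mathbf x(t))$ by exactness and uniqueness, and converges to the origin; the time reparametrization then gives a path to $\mathbf 0$, through which any two points are joined. The one minor difference is how convergence is obtained. You invoke the region-of-attraction statement of Proposition~\ref{prop:ellipsoidal-certificate}, which is valid since $\rho<\rho^\star(\Lambda)\le\rho_{\mathrm{rad}}$. The paper instead derives $\dot V\le-\delta\norm{\mathbf w}^2\le-cV$ directly from \eqref{eq:vdotJ} and applies Gronwall's inequality.
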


\begin{proof}
See the \hyperref[proof:conn]{proof of Corollary~\ref*{cor:conn}}
in Appendix~\ref{app:pullback-box-proofs} (p.~\pageref{proof:conn}).
\end{proof}

After optimizing the representation at each gain, the remaining choice is the extension
$\mathbf F_\Lambda$. It changes both $P(\Lambda)$ and $\rho^\star(\Lambda)$, and hence the
pullback certificate \eqref{eq:pullback}, while preserving the original dynamics on $\M$.

By Lemma~\ref{lem:split}, feasibility
reduces to the Hurwitz condition on the auxiliary block $\Dof$. An unrestricted maximizing gain need not exist:
Example~\ref{ex:unbounded-ray} below gives a member of the present stabilizer family for which
$\rho^\star(\lambda)\to+\infty$. We therefore include the gain domain, which is also the domain
used by the implementation, in the problem data.

\begin{definition}[admissible gain box]\label{def:gainbox}
An \emph{admissible gain box} is a nonempty box
$B=[\underline\Lambda,\overline\Lambda]\subset\R^m$ contained in the Hurwitz set
$\mathcal F:=\{\Lambda:\Aof\text{ is Hurwitz}\}$. Equivalently, compactness gives a uniform margin
$\gamma_B:=\min_{\Lambda\in B}-\Real\lambda_1(\Aof)>0$. A common Lyapunov inequality checked at all vertices
is a sufficient, implementation-verifiable certificate that $B\subset\mathcal F$ because $A$ is
affine in $\Lambda$.
Since $\mathcal F$ is open and nonempty, every $\Lambda^0\in\mathcal F$ lies in the interior of
some nondegenerate closed box $B\Subset\mathcal F$.
\end{definition}

\begin{problem}[box-constrained certificate optimization]\label{prob:main}
Given the fixed polynomial system $\mathbf p$, embedding $\Phi$,
weight $Q=I_N$, and lift data $(A_0,H_0,\{h_i\}_{i=1}^m)$, the derived quantities
$(W,\{s_i\}_{i=1}^m,\{K_k\}_{k=1}^d)$, and an admissible gain box $B$, solve
\begin{equation}\label{eq:problem}
\boxed{
\phi_B:=\min_{\Lambda\in B}\phi(\Lambda),
\qquad
\phi(\Lambda):=\min_{\mu\in\R^d}\norm{J(\Lambda,\mu)},
\qquad
\rho_B:=\phi_B^{-1},
}
\end{equation}
with the convention $1/0:=+\infty$. The map $J(\Lambda,\mu)$ is assembled from
\eqref{eq:AH}, \eqref{eq:Hmu}, \eqref{eq:lyapL}, and \eqref{eq:J}. Equivalently, using the
definitions above, an expanded formulation of \eqref{eq:problem} is
\begin{equation}\label{eq:problemfull}
\begin{aligned}
\min_{\substack{\Lambda\in B,\ \mu\in\R^d,\\ P\in\Sset^N,\ t\in\R}}
\quad &t\\[2pt]
\mathrm{s.t.}\quad
&\begin{bmatrix}
tI_{N^2}&J\\
J^{\!\top}&tI_N
\end{bmatrix}\succeq0,\\
&J=\big(P_f^{\!\top}\kron I_N\big)^{-1}G(\Lambda,\mu),
\qquad
P=P_f^{\!\top}P_f,\\
&\Aof^{\!\top}P+P\Aof=-I_N,
\qquad P\succ0,\\
&\Aof=A_0-\sum_{i=1}^m\lambda_i\,
\mathbf e_{n+i}\mathbf e_{n+i}^{\!\top},\\
&\Hof=H_0+\sum_{i=1}^m\lambda_i\,
\mathbf e_{n+i}s_i^{\!\top}+\sum_{k=1}^d\mu_kK_k .
\end{aligned}
\end{equation}
Its optimal value is $t_B=\phi_B$.
Proposition~\ref{prop:box-wellposed} below justifies attainment. A closed certified set is reported
with the safe radius of Corollary~\ref{cor:resid}, never with the open threshold $\rho_B$ itself.
\end{problem}

See the \hyperref[proof:problem-derivation]{derivation of the box-constrained formulation}
in Appendix~\ref{app:problem-derivation} (p.~\pageref{proof:problem-derivation}),
including the exact spectral-norm epigraph.

\begin{remark}\label{rem:forms}
The optimization variables are $\Lambda\in B$ and $\mu\in\R^d$.
The embedding, reference extension, stabilizer factorizations, Lyapunov weight, and gain box
are fixed problem data. Thus optimality in Problem~\ref{prob:main} refers to the stated
spectral-norm certificate within this family and box.
The admissible-box condition guarantees a unique symmetric solution $P(\Lambda)\succ0$ of
\eqref{eq:lyapL} at every outer point. Thus $P$ is not a free design variable; it appears in
\eqref{eq:problemfull} only to make the dependence on $\Lambda$ explicit. After eliminating $P$,
the outer dependence of $J(\Lambda,\mu)$ on $\Lambda$ combines the affine map $H(\Lambda)$ with
the nonlinear Lyapunov--Cholesky map $\Lambda\mapsto P(\Lambda)\mapsto P_f(\Lambda)$. For fixed
$\Lambda$, the problem in $(\mu,t)$ is an exact convex \gls{sdp}; the box-constrained outer problem
is generally nonsmooth and nonconvex. The objective optimizes the Lyapunov level threshold. Since
$P(\Lambda)$ also changes with the gain, it does not by itself order the volumes or set inclusions
of pullbacks obtained at different gains.
\end{remark}

The next result removes gauge directions invisible to the certificate before applying
Weierstrass' theorem.

\begin{proposition}[well-posed box value]\label{prop:box-wellposed}
The value function $\phi$ is locally Lipschitz on $\mathcal F$. Consequently, it attains a global
minimum on every admissible gain box $B$. If $\phi_B>0$, then $\rho^\star=1/\phi$ attains its box
maximum at exactly the same gains. If $\phi_B=0$, an optimizing pair satisfies $J=0$ and the
corresponding quadratic Lyapunov function is global.
\end{proposition}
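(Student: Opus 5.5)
The plan is to write $\phi$ as a distance-type function: for fixed $\Lambda$, $\phi(\Lambda)$ is the distance, in spectral norm, from the point $J(\Lambda,0)$ to the linear subspace $\mathcal J(\Lambda):=\operatorname{span}\{J_k(\Lambda)\}_{k=1}^d$. The difficulty is that this subspace may change dimension with $\Lambda$, and the gauge coordinates $\mu$ are unbounded. This is why we first remove the gauge directions invisible to the certificate. The map $K\mapsto G$ is $K\mapsto\operatorname{col}_i(K_i^{\!\top}P+PK_i)$, and $J$ differs from $G$ only by the invertible factor $(P_f^{\!\top}\kron I_N)^{-1}$. So the kernel of $\mu\mapsto\sum_k\mu_kJ_k(\Lambda)$ equals the kernel of $\mu\mapsto G(K(\mu))$. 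I will show this kernel does not depend on $\Lambda$. Consider the change of variables $K_i\mapsto \widetilde K_i:=P_f K_i P_f^{-1}$ together with the conjugated congruence $K_i^{\!\top}P+PK_i=P_f^{\!\top}(\widetilde K_i+\widetilde K_i^{\!\top})P_f$. Combined with the symmetry constraint $K(\mathbf w\kron\mathbf w)=0$, this should identify the invisible directions with a $\Lambda$-independent subspace; the resulting quotient has dimension $q=N(N^2-1)/3$, as announced in the introduction. If that identification fails, the fallback is to work directly with the quotient by the kernel at a fixed $\Lambda^0$ and show the rank of $\{J_k(\Lambda)\}$ is constant on $\mathcal F$. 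Rank constancy follows if the kernel is characterized intrinsically as those $K$ for which $\mathbf w^{\!\top}PK(\mathbf w\kron\mathbf w)$-type symmetrized blocks vanish, after the invertible conjugation by $P_f$.

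Given constant rank, choose a basis $\{\widehat J_k(\Lambda)\}_{k=1}^q$ of $\mathcal J(\Lambda)$ depending continuously (indeed real-analytically) on $\Lambda$. This works because $P(\Lambda)$ solves a linear system with an invertible Lyapunov operator on $\mathcal F$. The Cholesky factor is smooth on $P\succ0$, and the entries are therefore smooth in $\Lambda$. Fix a compact neighbourhood $U\Subset\mathcal F$ of a point. Uniform injectivity gives a constant $c>0$ with $\|\sum_k\nu_k\widehat J_k(\Lambda)\|\ge c\|\nu\|$ on $U$; this is a continuous positive smallest singular value on a compact set. Since $\phi(\Lambda)\le\|J(\Lambda,0)\|\le C$ on $U$, any optimal $\nu$ satisfies $\|\nu\|\le 2C/c=:R$. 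Hence on $U$,
\[
\phi(\Lambda)=\min_{\|\nu\|\le R}\Bigl\|J(\Lambda,0)+\sum_{k=1}^q\nu_k\widehat J_k(\Lambda)\Bigr\|.
\]
This is a minimum over a fixed compact set of functions that are Lipschitz in $\Lambda$ with a common constant, because their $\Lambda$-derivatives are bounded on $U\times\{\|\nu\|\le R\}$. A pointwise minimum of uniformly Lipschitz functions is Lipschitz with the same constant, which gives local Lipschitz continuity of $\phi$.

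The remaining claims are routine. An admissible box $B$ is compact and contained in $\mathcal F$, and $\phi$ is continuous, so Weierstrass gives a minimizer. If $\phi_B>0$, then $\phi>0$ on $B$ and $t\mapsto1/t$ is strictly decreasing on $(0,\infty)$, so the argmin of $\phi$ equals the argmax of $\rho^\star=1/\phi$. If $\phi_B=0$, take a minimizing $\Lambda$; the inner minimum is attained, as noted after \eqref{eq:rhostarL}, so some $\mu$ has $J=0$. Then \eqref{eq:vdotJ} gives $\dot V=-\|\mathbf w\|^2$ everywhere, so $V$ is a global Lyapunov function. I expect the main obstacle to be the first step: proving that the invisible gauge kernel, equivalently the rank of $\{J_k(\Lambda)\}$, is constant in $\Lambda$, since local uniform bounds on the minimizers depend on it.
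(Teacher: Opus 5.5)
Everything after the rank step is fine and is exactly the paper's argument: a uniform lower bound $c$ for the reduced gauge map on a compact neighbourhood, the common ball $\norm{\nu}\le 2C/c$ for inner minimizers, the minimum of uniformly Lipschitz functions, Weierstrass on $B$, monotonicity of $t\mapsto 1/t$, and $J=0$ giving $\dot V=-\norm{\mathbf w}^2$.

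\textbf{The gap.} The missing piece is constant rank of $\mu\mapsto\sum_k\mu_kJ_k(\Lambda)$. You leave it unproved, and the route you sketch would not deliver it.

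First, the kernel is in general \emph{not} independent of $\Lambda$. Left multiplication of the slices, $K_i\mapsto PK_i$, preserves the gauge constraint $(K_i)_{rj}=-(K_j)_{ri}$. So the kernel consists of the gauges with $PK$ fully antisymmetric, i.e.\ $P^{-1}\wedge^3\R^N$ with $P^{-1}$ acting on the output index. For $N\ge3$ this subspace changes whenever $P(\Lambda)$ changes other than by a scalar factor. Only its dimension is constant. Hence ``kernel constant'' and ``rank constant'' are not equivalent, as your last sentence suggests, and only the latter is needed.

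Second, the two-sided conjugation $\widetilde K_i=P_fK_iP_f^{-1}$ destroys the gauge constraint. The right factor mixes the column index $j$ but not the slice index $i$. In the $\widetilde K$ variables the skewness condition becomes $\Lambda$-free, but the gauge constraint becomes $\Lambda$-dependent, so nothing is gained. Your fallback of quotienting at $\Lambda^0$ presupposes the same rank constancy.

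\textbf{The missing idea: multiply on one side only.} Put $U_i=PK_i$. This is an automorphism of the gauge space, and it turns $K_i^{\!\top}P+PK_i$ into $U_i+U_i^{\!\top}$, a map that no longer involves $P$.

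Its kernel consists of tensors $(U_i)_{rj}$ that are antisymmetric in $(i,j)$ and in $(r,j)$, hence fully antisymmetric. So the rank is $d-\binom N3=N(N^2-1)/3=q$ at every gain.

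Better still, the image is the fixed space $\mathcal C$ of $N$-tuples of symmetric matrices whose cyclic sums $(B_i)_{jk}+(B_j)_{ki}+(B_k)_{ij}$ vanish. The inclusion is a direct check. For the dimension, the cyclic-sum map is onto the fully symmetric tensors, so $\dim\mathcal C=N^2(N+1)/2-\binom{N+2}3=q$.

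Hence a $\Lambda$-independent basis $\mathcal B_\ell$ of $\mathcal C$ yields $J_\ell(\Lambda)=(P_f(\Lambda)^{\!\top}\kron I_N)^{-1}\mathcal B_\ell$, which is injective and analytic on all of $\mathcal F$. Your coercivity and Lipschitz estimates then apply verbatim.
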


\begin{proof}
See the \hyperref[proof:box-wellposed]{proof of Proposition~\ref*{prop:box-wellposed}}
in Appendix~\ref{app:box-wellposed} (p.~\pageref{proof:box-wellposed}).
\end{proof}

\begin{example}[unbounded spectral-norm threshold]\label{ex:unbounded-ray}
For $\dot x=-x-x^3$ with lift $y=x^2$, consider the stabilizer family
\begin{equation}\label{eq:unbounded-ray-system}
\dot x=-x-xy,
\qquad
\dot y=-(2+\lambda)y+\lambda x^2-2y^2,
\qquad \lambda>-2.
\end{equation}
Put $s:=\lambda+2>0$. Then $P=\diag(1/2,1/(2s))$ and
$\dot V=-(x^2+y^2)(1+2y/s)$. A gauge whose two blocks in $G$ are $0$ and $-2I_2/s$ gives
$\phi(\lambda)\le2\sqrt{2/s}$. Taking $x=0$ and $y<0$, the cubic contribution is $2|y|^3/s$,
while \eqref{eq:vdotJ} bounds it above by $|y|^3\norm{J}/\sqrt{2s}$. Hence every gauge satisfies
$\norm{J}\ge2\sqrt{2/s}$, so $\phi(\lambda)=2\sqrt{2/s}$ and
$\rho^\star(\lambda)=\sqrt{s}/(2\sqrt2)\to+\infty$. Thus the unrestricted problem need not have a
maximizing gain; a boundary box optimum should be checked by widening $B$.
\end{example}

\begin{corollary}[Numerical certificate]\label{cor:resid}
Fix a gain, a gauge satisfying $K(\mathbf w\kron\mathbf w)\equiv0$, and
$\widehat P=\widehat P^{\!\top}\succ0$. Assemble $\widehat J$ from $\widehat P$ by
\eqref{eq:J}. Suppose verified bounds satisfy
\[
\norm{\Aof^{\!\top}\widehat P+\widehat P\Aof+I_N}\le\widehat\delta<1,
\qquad \norm{\widehat J}\le\widehat j.
\]
For $\eta\in(0,1)$ and $\widehat j>0$, the ellipsoid
$\mathcal D(\widehat P,\rho_{\mathrm{safe}})$ with
$\rho_{\mathrm{safe}}:=(1-\eta)(1-\widehat\delta)/\widehat j$ is positively invariant and
contained in the lifted \gls{roa}; its pullback under $\Phi$ is contained in the \gls{roa} of
\eqref{eq:sys}. If $\widehat j=0$ is verified exactly,
$\mathbf w^{\!\top}\widehat P\mathbf w$ is a global Lyapunov function.
\end{corollary}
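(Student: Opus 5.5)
The plan is to reduce Corollary~\ref{cor:resid} to the residual-aware part of Proposition~\ref{prop:ellipsoidal-certificate} with $Q=I_N$. In that case $Q^{-1/2}=I_N$, so the metric residual $\delta$ in \eqref{eq:metric-residual} is exactly $\norm{\Aof^{\!\top}\widehat P+\widehat P\Aof+I_N}\le\widehat\delta<1$. The proposition also lets verified upper bounds replace $\delta$ and $\gamma$, so only two things remain: show that $\widehat j$ is a valid $\gamma$, and match the radii.

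\textbf{Step 1: $\widehat j$ bounds the intrinsic growth.} I would repeat the computation leading to \eqref{eq:vdotJ} with $\widehat P=\widehat P_f^{\!\top}\widehat P_f$ in place of the exact solution. The gauge satisfies $K(\mathbf w\kron\mathbf w)\equiv0$, so the blocks $M_i$ still represent the true quadratic field $\mathbf q$. From the block definition one gets
\[
2\mathbf w^{\!\top}\widehat P\mathbf q(\mathbf w)=\sum_i w_i\,\mathbf w^{\!\top}(M_i^{\!\top}\widehat P+\widehat PM_i)\mathbf w=(\mathbf w^{\!\top}\kron\mathbf w^{\!\top})\widehat G\,\mathbf w .
\]
Inserting $(\widehat P_f^{\!\top}\kron I_N)(\widehat P_f^{\!\top}\kron I_N)^{-1}$ gives
\[
(\mathbf w^{\!\top}\widehat P_f^{\!\top}\kron\mathbf w^{\!\top})\,\widehat J\,\mathbf w .
\]
This is bounded in absolute value by $\norm{\widehat P_f\mathbf w}\,\norm{\mathbf w}^2\norm{\widehat J}=\sqrt{\widehat V(\mathbf w)}\,\norm{\mathbf w}^2\norm{\widehat J}$. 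With $Q=I_N$, the definition \eqref{eq:intrinsic-growth} then gives $\beta(\widehat P,I_N,\mathbf q)\le\norm{\widehat J}\le\widehat j$. So $\gamma:=\widehat j$ is admissible, and this is the one place where the gauge hypothesis is used.

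\textbf{Step 2: apply the proposition and match the radius.} With $\theta:=1-\eta\in(0,1)$ and $\widehat j>0$, \eqref{eq:metric-safe-ellipsoid} states that $\mathcal D\bigl(\widehat P,\theta(1-\widehat\delta)/\widehat j\bigr)=\mathcal D(\widehat P,\rho_{\mathrm{safe}})$ is positively invariant, has strict decrease away from the origin, and lies in the lifted \gls{roa}. To be safe I would also rederive the decrease directly from \eqref{eq:metric-residual-decay}. On this ellipsoid $\sqrt{\widehat V}\le\theta(1-\widehat\delta)/\widehat j$, so
\[
\dot{\widehat V}\le-(1-\widehat\delta)(1-\theta)\norm{\mathbf w}^2<0
\]
for $\mathbf w\ne\mathbf 0$. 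Positive invariance follows because $\widehat V$ cannot increase at the boundary. Convergence follows from LaSalle or Lyapunov's theorem on the compact set.

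\textbf{Step 3: pullback.} I would repeat the argument given after \eqref{eq:pullback}. Take $\mathbf x_0$ with $\Phi(\mathbf x_0)\in\mathcal D(\widehat P,\rho_{\mathrm{safe}})$. By exactness of $\mathbf F_\Lambda$ on $\M$ and uniqueness of solutions, the lifted trajectory is $\Phi(\mathbf x(t))$. It stays in the compact ellipsoid, so it exists globally and tends to $\mathbf 0$. Since $\Phi(\mathbf x)=(\mathbf x,\cdot)$, its first block gives $\mathbf x(t)\to\mathbf 0$.

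\textbf{Step 4: the case $\widehat j=0$.} Then $\gamma=0$ and the proposition's global clause applies. Directly, $\dot{\widehat V}\le-(1-\widehat\delta)\norm{\mathbf w}^2$ holds everywhere, and $\widehat V$ is radially unbounded.

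The main obstacle is Step 1. One must make sure $\widehat J$ is assembled with the same factor $\widehat P_f$ as $\widehat P$, and that $\norm{J}$ does not depend on which factor is chosen; orthogonal invariance, noted after \eqref{eq:J}, handles the latter. One must also confirm that a verified bound on the floating-point $\widehat J$ transfers to the exact quantity. The corollary assumes this through the phrase ``verified bounds''.
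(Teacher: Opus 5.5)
Your proposal is correct and follows essentially the same route as the paper. The paper also obtains $\beta(\widehat P,I_N,\mathbf q)\le\norm{\widehat J}\le\widehat j$, but by citing Proposition~\ref{prop:intrinsic-bound}, whose proof is your Step~1 computation with $Q_f=I_N$. It then applies Proposition~\ref{prop:ellipsoidal-certificate} with $Q=I_N$, $\theta=1-\eta$ and the verified bounds $\widehat\delta,\widehat j$, and gets the pullback claim from exactness of the lift, as in your Steps~2--4.
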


\begin{proof}
\leavevmode\phantomsection\label{proof:resid}
Proposition~\ref{prop:intrinsic-bound} gives
$\beta(\widehat P,I_N,\mathbf q)\le\norm{\widehat J}\le\widehat j$.
Apply Proposition~\ref{prop:ellipsoidal-certificate} with $Q=I_N$, $\theta=1-\eta$,
and the verified upper bounds $\widehat\delta,\widehat j$.
Exactness of the lift gives the pullback claim.
\end{proof}
For numerical use, symmetrize the returned $P$, verify positive definiteness, and bound the
normalized residual and the norm of the reassembled matrix. The SDP epigraph variable alone
does not bound the returned matrix. Check the gauge identity coefficientwise; any nonzero
defect must be included in the nonlinear growth bound using
\eqref{eq:app-gauge-defect-bound} in Appendix~\ref{app:verified-certificate}.
Verified bounds must account for rounding errors in assembly and norm evaluation.
Certificate validity requires these bounds, independently of solver optimality.

Problem~\ref{prob:main} reduces to the fixed-pair convex construction in
\citep[\S4]{kramer2021stability} when $m=0$ or $\Lambda$ is fixed, since only the gauge $\mu$
remains. By contrast, the doubling rule in \citep[Alg.~2]{cai2024dissipative} finds a feasible
scalar gain but optimizes neither the gauge nor the certificate over $B$.

\begin{remark}[Multiple equilibria]\label{rem:multi}
To preserve dissipativity at several equilibria $x_1^*,\dots,x_\ell^*$ simultaneously, as in
\cite{cai2024dissipative}, impose $D_j(\Lambda)$ Hurwitz for every $j$, where
\[
D_j(\Lambda)=W_j-\diag(\Lambda)L_j,\qquad
W_j=J_y\mathbf q_2(x_j^*,\mathbf g(x_j^*))
-J_x\mathbf g(x_j^*)J_y\mathbf q_1(x_j^*,\mathbf g(x_j^*)),
\]
and $L_j=J_y\mathbf h(x_j^*,\mathbf g(x_j^*))$. The second term in $W_j$ vanishes at the standing
origin because $J_x\mathbf g(\mathbf0)=0$, but it must be retained at a shifted equilibrium. The
certificate itself remains anchored at one equilibrium.
\end{remark}

\subsection{The certified volume at leading order}
\label{sec:volume}

Problem~\ref{prob:main} maximizes the lifted level radius $\rho^\star(\Lambda)$, whereas the
geometric quantity of interest is the volume of its pullback $S(\Lambda,\rho)$. These objectives
need not agree a priori because the metric $P(\Lambda)$ varies with the gain and $\Phi$ is nonlinear.
They do agree to leading order near the origin. Partition $P$ conformally with
$\mathbf w=(\mathbf x,\mathbf y)$ as
\begin{equation}\label{eq:Pblocks}
P(\Lambda)=
\begin{bmatrix}
P_{xx}(\Lambda)&P_{xy}(\Lambda)\\[2pt]
P_{xy}(\Lambda)^{\!\top}&P_{yy}(\Lambda)
\end{bmatrix},
\qquad P_{xx}(\Lambda)\in\Sset^n.
\end{equation}
Since every auxiliary monomial has degree at least two,
$\Phi(\mathbf x)=(\mathbf x,O(\norm{\mathbf x}^2))$, and hence
\begin{equation}\label{eq:vtildegerm}
\widetilde V_\Lambda(\mathbf x)
=\mathbf x^{\!\top}P_{xx}(\Lambda)\mathbf x+O(\norm{\mathbf x}^3).
\end{equation}

\begin{proposition}[Leading-order certified volume]\label{prop:vol}
For every feasible $\Lambda$, as $\rho\downarrow0$,
\begin{equation}\label{eq:volexp}
\operatorname{vol}S(\Lambda,\rho)
=\kappa_n\frac{\rho^n}{\sqrt{\det P_{xx}(\Lambda)}}+o(\rho^n),
\qquad
\kappa_n:=\frac{\pi^{n/2}}{\Gamma(n/2+1)}.
\end{equation}
\end{proposition}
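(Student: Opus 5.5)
The plan is to rescale by $\rho$ and apply dominated convergence to indicator functions. First, $P_{xx}(\Lambda)\succ0$, since it is a principal block of $P(\Lambda)\succ0$; write $\lambda_0:=\lambda_{\min}(P_{xx})>0$. From \eqref{eq:vtildegerm}, fix $r_0>0$ and $C>0$ with
\[
\bigl|\widetilde V_\Lambda(\mathbf x)-\mathbf x^{\!\top}P_{xx}\mathbf x\bigr|\le C\norm{\mathbf x}^3
\qquad\text{for }\norm{\mathbf x}\le r_0 .
\]
Here $\Phi$ is polynomial with $\mathbf g=O(\norm{\mathbf x}^2)$, so the cross term $2\mathbf x^{\!\top}P_{xy}\mathbf g(\mathbf x)$ is $O(\norm{\mathbf x}^3)$ and $\mathbf g^{\!\top}P_{yy}\mathbf g$ is $O(\norm{\mathbf x}^4)$.

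The main obstacle is that $S(\Lambda,\rho)$ may have pieces far from the origin, since $\widetilde V_\Lambda$ need not be coercive in principle. This can be handled directly: $\widetilde V_\Lambda(\mathbf x)\ge\lambda_{\min}(P)\norm{\Phi(\mathbf x)}^2\ge\lambda_{\min}(P)\norm{\mathbf x}^2$, because $\Phi(\mathbf x)$ contains $\mathbf x$ as a subvector. Hence $S(\Lambda,\rho)\subseteq\{\norm{\mathbf x}\le\rho/\sqrt{\lambda_{\min}(P)}\}$, which is a ball of radius $O(\rho)$. For $\rho$ small this ball lies inside $\{\norm{\mathbf x}\le r_0\}$. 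Corollary~\ref{cor:conn} could alternatively be used for connectivity, but it is not needed.

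Next, substitute $\mathbf x=\rho\mathbf u$:
\[
\operatorname{vol}S(\Lambda,\rho)=\rho^n\int_{\R^n}\mathbf 1\{\rho^{-2}\widetilde V_\Lambda(\rho\mathbf u)\le1\}\,d\mathbf u .
\]
The integrand is supported in the fixed ball $\norm{\mathbf u}\le1/\sqrt{\lambda_{\min}(P)}$, which supplies the dominating function. On that ball,
\[
\rho^{-2}\widetilde V_\Lambda(\rho\mathbf u)=\mathbf u^{\!\top}P_{xx}\mathbf u+O(\rho)
\]
uniformly. Therefore the indicator converges pointwise to $\mathbf 1\{\mathbf u^{\!\top}P_{xx}\mathbf u\le1\}$ at every $\mathbf u$ off the ellipsoid boundary $\{\mathbf u^{\!\top}P_{xx}\mathbf u=1\}$, which has Lebesgue measure zero. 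Equivalently, one can sandwich
\[
\{\mathbf u^{\!\top}P_{xx}\mathbf u\le1-c\rho\}\subseteq\rho^{-1}S(\Lambda,\rho)\subseteq\{\mathbf u^{\!\top}P_{xx}\mathbf u\le1+c\rho\}.
\]
Dominated convergence then gives
\[
\rho^{-n}\operatorname{vol}S(\Lambda,\rho)\to\operatorname{vol}\{\mathbf u:\mathbf u^{\!\top}P_{xx}\mathbf u\le1\}.
\]

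Finally, the unit ellipsoid of $P_{xx}$ has volume $\kappa_n/\sqrt{\det P_{xx}(\Lambda)}$, by the change of variables $\mathbf v=P_{xx}^{1/2}\mathbf u$. This yields \eqref{eq:volexp}. The sandwich version shows the error is in fact $O(\rho^{n+1})$, which is stronger than the stated $o(\rho^n)$.
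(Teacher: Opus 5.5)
Your proof is correct and follows essentially the paper's argument: a cubic bound on the remainder $2\mathbf x^{\!\top}P_{xy}\mathbf g(\mathbf x)+\mathbf g(\mathbf x)^{\!\top}P_{yy}\mathbf g(\mathbf x)$, localization of $S(\Lambda,\rho)$ to a ball of radius $O(\rho)$ via $\widetilde V_\Lambda(\mathbf x)\ge\lambda_{\min}(P)\norm{\mathbf x}^2$, and a two-sided ellipsoidal sandwich. The only difference is that you tie the localization radius to $\rho$ by rescaling, which gives the explicit $O(\rho^{n+1})$ error; the paper instead uses a multiplicative sandwich on a fixed ball $\mathbb B_\delta$ and lets first $\rho\downarrow0$ and then $\delta\downarrow0$.
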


\begin{proof}
A two-sided quadratic sandwich gives the result; see the
\hyperref[proof:vol]{proof of Proposition~\ref*{prop:vol}}
in Appendix~\ref{app:volume} (p.~\pageref{proof:vol}).
\end{proof}

If $\phi_B>0$, all thresholds on $B$ are finite, and the logarithm of the
leading coefficient at the scaled level $\rho=\varepsilon\rho^\star(\Lambda)$ suggests the score
\begin{equation}\label{eq:volproxy}
n\log\rho^\star(\Lambda)-\tfrac12\log\det P_{xx}(\Lambda).
\end{equation}
The apparent correction is constant across gains.

\begin{proposition}[Gain invariance of the quadratic germ]\label{prop:pxx}
Fix $Q\in\Sset^N$, $Q\succ0$, and let $P_Q(\Lambda)\succ0$ solve
$\Aof^{\!\top}P_Q+P_Q\Aof=-Q$. Then, for every feasible $\Lambda$, its physical block satisfies
\[
P_{Q,xx}(\Lambda)=\bar P_Q,
\qquad
J_p^{\!\top}\bar P_Q+\bar P_QJ_p=-Q_{xx},
\]
where $Q_{xx}$ is the physical block of $Q$. Thus $\bar P_Q$ is the unique solution and is
independent of $\Lambda$; in particular, $P_{xx}$ in \eqref{eq:Pblocks} equals the gain-invariant
matrix $\bar P_I$ obtained for $Q=I_N$.
\end{proposition}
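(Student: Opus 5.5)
The plan is to substitute the block upper-triangular structure of Lemma~\ref{lem:split} into the Lyapunov equation and read off its $(1,1)$ block. Write $A(\Lambda)=\begin{bmatrix}J_p&R\\0&D(\Lambda)\end{bmatrix}$ and partition $P_Q(\Lambda)$ as in \eqref{eq:Pblocks}, with blocks $P_{Q,xx},P_{Q,xy},P_{Q,yy}$. Partition $Q$ conformally as $Q_{xx},Q_{xy},Q_{yy}$.

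Multiplying out $A^{\!\top}P_Q$ gives first block row $\bigl[J_p^{\!\top}P_{Q,xx},\ J_p^{\!\top}P_{Q,xy}\bigr]$. Its transpose $P_QA$ has $(1,1)$ block $P_{Q,xx}J_p$. The $(1,1)$ block of $A^{\!\top}P_Q+P_QA=-Q$ is therefore
\[
J_p^{\!\top}P_{Q,xx}+P_{Q,xx}J_p=-Q_{xx}.
\]
The key point is that the lower-left block of $A(\Lambda)$ is zero. Neither $R$ nor $D(\Lambda)$, and in particular no gain, enters this block equation.

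Next, I would argue uniqueness. Under the standing assumption $\alpha_p>0$, $J_p$ is Hurwitz. The operator $X\mapsto J_p^{\!\top}X+XJ_p$ has eigenvalues $\lambda_i+\lambda_j$ with $\lambda_i,\lambda_j\in\eig(J_p)$, all with negative real part, so it is invertible on $\R^{n\times n}$. Hence the reduced equation has a unique solution $\bar P_Q$. It depends only on $J_p$ and $Q_{xx}$, so it is independent of $\Lambda$.

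For every feasible $\Lambda$, $P_Q(\Lambda)$ exists and is unique because $A(\Lambda)$ is Hurwitz. Its block $P_{Q,xx}(\Lambda)$ solves the reduced equation, so $P_{Q,xx}(\Lambda)=\bar P_Q$. Symmetry and positive definiteness of $\bar P_Q$ follow for free: it is a principal block of $P_Q\succ0$, and it also equals $\int_0^\infty e^{J_p^{\!\top}t}Q_{xx}e^{J_pt}\,dt$ with $Q_{xx}\succ0$. Specializing to $Q=I_N$ gives $Q_{xx}=I_n$ and $P_{xx}=\bar P_I$.

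None of these steps is a real obstacle. The only points needing care are the block bookkeeping, confirming that the $(1,1)$ block involves no $R$ or $D$ term, and invoking $\alpha_p>0$ rather than Hurwitzness of $A(\Lambda)$ for uniqueness of the reduced solution.
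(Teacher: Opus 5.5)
Your proposal is correct and is essentially the paper's proof. Like the paper, you read off the $(1,1)$ block of the Lyapunov equation using the block upper-triangular form from Lemma~\ref{lem:split}, then use that $J_p$ is Hurwitz to conclude the reduced equation has a unique, gain-independent solution. Your added remarks on symmetry and positive definiteness of $\bar P_Q$ are correct but not needed.
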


\begin{proof}
See the \hyperref[proof:pxx]{proof of Proposition~\ref*{prop:pxx}}
in Appendix~\ref{app:quadratic-germ} (p.~\pageref{proof:pxx}).
\end{proof}

\begin{corollary}[Radius and leading-order volume]\label{cor:volopt}
If $\phi_B>0$, then, for every $\Lambda\in B$ and as $\varepsilon\downarrow0$,
\[
\operatorname{vol}S\big(\Lambda,\varepsilon\rho^\star(\Lambda)\big)
=\frac{\kappa_n\big(\rho^\star(\Lambda)\big)^n}{\sqrt{\det\bar P_I}}\,
\varepsilon^n+o(\varepsilon^n).
\]
Hence Problem~\ref{prob:main} ranks gains exactly by the leading coefficients of these scaled
certified volumes. If $\phi_B=0$, Proposition~\ref{prop:box-wellposed} instead gives an optimizing
gain with a global quadratic Lyapunov certificate.
\end{corollary}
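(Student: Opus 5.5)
The corollary follows by substituting the scaled level $\rho=\varepsilon\rho^\star(\Lambda)$ into Proposition~\ref{prop:vol} and replacing $P_{xx}(\Lambda)$ by the gain-invariant matrix of Proposition~\ref{prop:pxx}. First, since $\phi_B>0$ and $\phi(\Lambda)\ge\phi_B$ for every $\Lambda\in B$, each threshold $\rho^\star(\Lambda)=1/\phi(\Lambda)$ lies in $(0,1/\phi_B]$, so it is finite and positive. For fixed $\Lambda\in B$, the level $\rho=\varepsilon\rho^\star(\Lambda)$ therefore tends to $0$ as $\varepsilon\downarrow0$. Applying \eqref{eq:volexp} with this $\rho$ gives
\[
\operatorname{vol}S\big(\Lambda,\varepsilon\rho^\star(\Lambda)\big)
=\kappa_n\frac{(\rho^\star(\Lambda))^n\varepsilon^n}{\sqrt{\det P_{xx}(\Lambda)}}+o\big((\varepsilon\rho^\star(\Lambda))^n\big).
\]
Because $\rho^\star(\Lambda)$ is a fixed positive constant for this $\Lambda$, the remainder $o((\varepsilon\rho^\star)^n)$ is $o(\varepsilon^n)$. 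Since $B\subset\mathcal F$, Proposition~\ref{prop:pxx} with $Q=I_N$ gives $P_{xx}(\Lambda)=\bar P_I$, which yields the displayed expansion. The uniformity in $\Lambda$ is not needed, since the claim is pointwise in $\Lambda$ and $\varepsilon$ is the only asymptotic parameter.

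For the ranking statement, the leading coefficient $c(\Lambda)=\kappa_n(\rho^\star(\Lambda))^n/\sqrt{\det\bar P_I}$ has a positive prefactor independent of $\Lambda$. Because $t\mapsto t^n$ is strictly increasing on $(0,\infty)$, the ordering $c(\Lambda)\le c(\Lambda')$ holds if and only if $\rho^\star(\Lambda)\le\rho^\star(\Lambda')$, which in turn holds if and only if $\phi(\Lambda)\ge\phi(\Lambda')$. Minimizing $\phi$ over $B$ is therefore the same as maximizing $c$ over $B$, and by Proposition~\ref{prop:box-wellposed} the two problems have the same set of optimal gains.

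In the case $\phi_B=0$, I will simply invoke the last sentence of Proposition~\ref{prop:box-wellposed}: an optimizing pair satisfies $J=0$, so by \eqref{eq:vdotJ} the function $V$ decreases globally. The only delicate point in the whole argument is checking that $\rho^\star(\Lambda)$ is finite throughout $B$, so that the scaled level is well defined and the $o$-term is legitimate. The hypothesis $\phi_B>0$ supplies exactly this.
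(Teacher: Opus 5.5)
Your proposal is correct and matches the argument the paper intends (it gives no separate proof, only the surrounding text). You substitute $\rho=\varepsilon\rho^\star(\Lambda)$ into Proposition~\ref{prop:vol}, use $\phi_B>0$ to get $\rho^\star(\Lambda)\in(0,1/\phi_B]$ so that the remainder is $o(\varepsilon^n)$, and replace $P_{xx}(\Lambda)$ by $\bar P_I$ via Proposition~\ref{prop:pxx}; the ranking then follows from strict monotonicity of $\phi\mapsto\kappa_n\phi^{-n}/\sqrt{\det\bar P_I}$, and the $\phi_B=0$ case is exactly the last clause of Proposition~\ref{prop:box-wellposed}.
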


Thus, for the fixed weight $Q=I_N$, the gain affects the leading-order volume only through the achievable radius,
not through the local shape factor. This is a local statement: finite-radius volumes and set
inclusions can still depend on the higher-order lift and on $P_{xy}$ and $P_{yy}$. For $Q=I_N$,
$\bar P_I$ is precisely the Lyapunov matrix of the plant linearization, so all lifts share the plant's
quadratic germ. Within this family, changing that germ requires varying the normalized weight
$Q_{xx}$, as anticipated in Remark~\ref{rem:Q}.

\section{Solving the fixed-family certificate problem}
\label{sec:alg}

Problem~\ref{prob:main} has one convex and one nonconvex level. For fixed $\Lambda$, the Lyapunov
equation fixes $P(\Lambda)$ and the gauge is optimized by an exact \gls{sdp}; the remaining
box-constrained value function $\phi$ is locally Lipschitz but generally nonsmooth and nonconvex.
This section develops the corresponding value-and-derivative oracle and a gradient-sampling
L-BFGS method. The lift, stabilizer factorizations, and admissible box are fixed throughout.

\subsection{The exact reduced inner oracle}
\label{sec:alg-inner}

Fix $\Lambda\in\mathcal F$ and put $J_0:=J(\Lambda,0)$.
The linear certificate gauge map and its image are
$\mathcal T_\Lambda\mu:=J(\Lambda,\mu)-J_0$ and
$\mathcal W_\Lambda:=\operatorname{im}\mathcal T_\Lambda\subset\R^{N^2\times N}$.
Write $\langle U,V\rangle_F:=\operatorname{tr}(U^{\!\top}V)$ and let $\norm{Z}_*$ denote
the nuclear norm, the sum of the singular values of $Z$.

\begin{proposition}[Gauge-image distance and duality]\label{prop:inner-distance-dual}
For every fixed $\Lambda\in\mathcal F$,
\begin{equation}\label{eq:inner-distance}
\phi(\Lambda)
=\min_{W\in\mathcal W_\Lambda}\norm{J_0+W}
=\operatorname{dist}_{\norm{\cdot}}(J_0,\mathcal W_\Lambda),
\end{equation}
where the distance is induced by the spectral norm. Its dual is
\begin{equation}\label{eq:inner-distance-dual}
\phi(\Lambda)=
\max_{\substack{Z\in\R^{N^2\times N}\\
\norm{Z}_*\le1,\ Z\in\mathcal W_\Lambda^\perp}}
\langle Z,J_0\rangle_F.
\end{equation}
Here $Z\in\mathcal W_\Lambda^\perp$ is equivalent to
$\langle Z,\mathcal T_\Lambda\mathbf e_k\rangle_F=0$ for $k=1,\dots,d$.
Both optima are attained. A matrix $J^\star\in J_0+\mathcal W_\Lambda$ is optimal if and
only if there exists $Z^\star\in\mathcal W_\Lambda^\perp$ with
$\norm{Z^\star}_*\le1$ and $\langle Z^\star,J^\star\rangle_F=\norm{J^\star}$;
equivalently,
$\partial\norm{\cdot}(J^\star)\cap\mathcal W_\Lambda^\perp\ne\varnothing$.
For any gauge $\mu$ and any dual-feasible $Z$,
\begin{equation}\label{eq:inner-duality-gap}
0\le\norm{J(\Lambda,\mu)}-\phi(\Lambda)
\le\norm{J(\Lambda,\mu)}-\langle Z,J_0\rangle_F.
\end{equation}
Thus a zero primal--dual gap certifies global inner optimality.
\end{proposition}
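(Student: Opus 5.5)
The plan is to treat the inner problem as a best-approximation problem in a finite-dimensional normed space and then apply standard norm duality. First, use the affine structure already noted after \eqref{eq:rhostarL}: $J(\Lambda,\mu)=J_0+\mathcal T_\Lambda\mu$ with $\mathcal T_\Lambda$ linear. As $\mu$ ranges over $\R^d$, $\mathcal T_\Lambda\mu$ ranges exactly over $\mathcal W_\Lambda$. Hence $\phi(\Lambda)=\inf_{W\in\mathcal W_\Lambda}\norm{J_0+W}$. Since $\mathcal W_\Lambda$ is a subspace, $-\mathcal W_\Lambda=\mathcal W_\Lambda$, so this equals $\operatorname{dist}_{\norm{\cdot}}(J_0,\mathcal W_\Lambda)$.

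Attainment of the primal follows from coercivity. Restrict to the compact set $\{W\in\mathcal W_\Lambda:\norm{W}\le2\norm{J_0}\}$, which is closed because finite-dimensional subspaces are closed. Any minimizing sequence eventually lies in this set, since $\norm{J_0+W}\ge\norm{W}-\norm{J_0}$. The continuous norm then attains its minimum there.

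For the dual, use that the nuclear norm is the dual norm of the spectral norm under $\langle\cdot,\cdot\rangle_F$, so $\norm{X}=\max_{\norm{Z}_*\le1}\langle Z,X\rangle_F$. Weak duality is immediate. For $Z\in\mathcal W_\Lambda^\perp$ with $\norm{Z}_*\le1$ and any $W\in\mathcal W_\Lambda$,
\[
\langle Z,J_0\rangle_F=\langle Z,J_0+W\rangle_F\le\norm{J_0+W}.
\]
This inequality already yields the gap bound \eqref{eq:inner-duality-gap}.

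Strong duality is the main step. I would obtain it from the Hahn--Banach/best-approximation duality theorem, $\operatorname{dist}(x,S)=\max\{\langle z,x\rangle:\norm{z}_*\le1,\ z\perp S\}$, which holds for a subspace $S$ of a finite-dimensional space. Concretely, in the quotient space $\R^{N^2\times N}/\mathcal W_\Lambda$ the quotient norm of $[J_0]$ equals $\phi(\Lambda)$. The dual of this quotient space is isometric to $\mathcal W_\Lambda^\perp$ equipped with the nuclear norm. A norming functional for $[J_0]$ therefore exists, which gives both the equality and attainment of the dual maximum. The dual unit ball intersected with the subspace is compact, so existence is unproblematic. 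Alternatively, strong duality follows from Lagrangian/Sion minimax applied to $\min_W\max_{\norm{Z}_*\le1}\langle Z,J_0+W\rangle_F$. The $Z$-set is compact and the objective is bilinear, so min and max can be swapped. The inner minimum over $W$ is then $-\infty$ unless $Z\perp\mathcal W_\Lambda$. The characterization of $\mathcal W_\Lambda^\perp$ via $\langle Z,\mathcal T_\Lambda\mathbf e_k\rangle_F=0$ holds because the matrices $\mathcal T_\Lambda\mathbf e_k$ span $\mathcal W_\Lambda$.

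Finally, I would derive the optimality conditions from the zero-gap equality. If $J^\star=J_0+W^\star$ is optimal and $Z^\star$ is dual optimal, then
\[
\langle Z^\star,J^\star\rangle_F=\langle Z^\star,J_0\rangle_F=\phi(\Lambda)=\norm{J^\star}.
\]
Conversely, any such $Z^\star$ gives $\norm{J^\star}=\langle Z^\star,J_0\rangle_F\le\phi(\Lambda)$, so $J^\star$ is optimal. The subdifferential of the spectral norm at $J^\star$ is $\{Z:\norm{Z}_*\le1,\ \langle Z,J^\star\rangle_F=\norm{J^\star}\}$. This identifies the condition with $\partial\norm{\cdot}(J^\star)\cap\mathcal W_\Lambda^\perp\ne\varnothing$, and it is also the standard first-order condition for minimizing a convex function over an affine set.
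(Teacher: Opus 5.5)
Your proof is correct, but you reach strong duality by a different route from the paper.

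\textbf{The paper's route.} The paper first derives the dual formally from the Lagrangian $\norm{M}+\langle Z,J_0+\mathcal T_\Lambda\mu-M\rangle_F$. The infimum over $\mu$ forces $Z\in\mathcal W_\Lambda^\perp$, and the infimum over $M$ forces $\norm{Z}_*\le1$. It then proves strong duality from the primal side. At an attained minimizer $J^\star$, the convex Fermat rule on the affine set $J_0+\mathcal W_\Lambda$ gives $0\in\partial\norm{\cdot}(J^\star)+\mathcal W_\Lambda^\perp$. The characterization $Z\in\partial\norm{\cdot}(J)\iff\norm{Z}_*\le1,\ \langle Z,J\rangle_F=\norm{J}$, which the paper verifies by testing the subgradient inequality at $0$ and $2J$, turns that subgradient into a dual-feasible $Z^\star$ with $\langle Z^\star,J_0\rangle_F=\phi(\Lambda)$. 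Strong duality and the forward direction of the optimality criterion thus come out of a single step, with the certificate built from the primal optimum.

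\textbf{Your route.} You take strong duality as a black box, via quotient-space Hahn--Banach or Sion's minimax. You then read the optimality criterion off the zero gap, using dual attainment by compactness.

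\textbf{Comparison.} Your route avoids subdifferential calculus, in particular the sum rule $\partial(\norm{\cdot}+\iota_{J_0+\mathcal W_\Lambda})=\partial\norm{\cdot}+\mathcal W_\Lambda^\perp$, and it is cleanly basis-free. The paper's route is self-contained given the norm-subgradient formula. It also matches how a certificate in $\partial\norm{\cdot}(J^\star)\cap\mathcal W_\Lambda^\perp$ is later produced from conic multipliers in Proposition~\ref{prop:regular-sensitivity}.

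\textbf{A cosmetic point.} A minimizing sequence need not eventually satisfy $\norm{W}\le2\norm{J_0}$ when $\phi(\Lambda)=\norm{J_0}$. In that case, however, $W=0$ is already optimal. In general, boundedness of minimizing sequences, or comparison with $W=0$, gives attainment directly.
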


\begin{proof}
See the \hyperref[proof:inner-distance-dual]{proof of Proposition~\ref*{prop:inner-distance-dual}}
in Appendix~\ref{app:inner-distance-dual} (p.~\pageref{proof:inner-distance-dual}).
\end{proof}
This formulation depends on the gauge image, not its basis; it still optimizes a
spectral-norm upper bound on $\beta$. The distance dual and gap bound are standard convex
norm duality. The quadratization-specific step is identifying $\mathcal W_\Lambda$ and
removing its redundant gauge coordinates.

The proof of Proposition~\ref{prop:box-wellposed} identifies the $P$-independent space
$\mathcal C$ seen by the certificate. Fix a basis
$(B_1^{(\ell)},\dots,B_N^{(\ell)})$, $\ell=1,\dots,q$, of $\mathcal C$, where
$q=N(N^2-1)/3$, and define
\begin{equation}\label{eq:reduced-oracle}
\begin{aligned}
\mathcal B_\ell&:=\operatorname{col}_{r=1}^N B_r^{(\ell)},
&X(\Lambda)&:=P_f(\Lambda)^{\!\top}\kron I_N,\\
G_{\rm r}(\Lambda,\nu)&:=G(\Lambda,0)+\sum_{\ell=1}^q\nu_\ell\mathcal B_\ell,
&J_{\rm r}(\Lambda,\nu)&:=X(\Lambda)^{-1}G_{\rm r}(\Lambda,\nu).
\end{aligned}
\end{equation}
The basis matrices $\mathcal B_\ell$ are independent of $\Lambda$ and are computed once.
Their images $J_\ell(\Lambda):=X(\Lambda)^{-1}\mathcal B_\ell$
form a basis of $\mathcal W_\Lambda$, so the reduced coordinates describe the same distance
problem.

\begin{proposition}[Exact reduced inner oracle]\label{prop:reduced-inner}
For every $\Lambda\in\mathcal F$,
\begin{equation}\label{eq:inner-generalized-schur}
\boxed{
\begin{aligned}
\phi(\Lambda)^2=\tau^\star(\Lambda)
:={}&\min_{\nu\in\R^q,\,\tau\in\R}\ \tau\\
&\mathrm{s.t.}\quad
F_\Lambda(\nu,\tau):=
\begin{bmatrix}
P(\Lambda)\kron I_N&G_{\rm r}(\Lambda,\nu)\\
G_{\rm r}(\Lambda,\nu)^{\!\top}&\tau I_N
\end{bmatrix}\succeq0 .
\end{aligned}}
\end{equation}
The problem is strictly feasible and attains its optimum. If $\nu^\star$ is returned, a raw gauge
representative is recovered from the minimum-norm linear problem
\begin{equation}\label{eq:raw-gauge-reconstruction}
\mu^\star=\operatorname*{arg\,min}_{\mu\in\R^d}\ \tfrac12\norm{\mu}^2
\quad\mathrm{s.t.}\quad
\operatorname{col}_{r=1}^N\!\left(S_r(\mu)^{\!\top}P+PS_r(\mu)\right)
=\sum_{\ell=1}^q\nu_\ell^\star\mathcal B_\ell,
\end{equation}
where $S_r(\mu)$ are the slices of $K(\mu)$ in \eqref{eq:Hmu}. This reconstruction leaves the
quadratic vector field and the value $\phi(\Lambda)$ unchanged.
\end{proposition}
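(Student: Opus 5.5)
We prove Proposition~\ref{prop:reduced-inner} by rewriting the spectral-norm distance problem of Proposition~\ref{prop:inner-distance-dual} as a generalized Schur complement. The only quadratization-specific input is the identification of the gauge image $\mathcal W_\Lambda$ with $X(\Lambda)^{-1}\mathcal C$, where $\mathcal C$ is the space from the proof of Proposition~\ref{prop:box-wellposed}.

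The first step is to verify that $\{J_\ell(\Lambda)\}$ spans $\mathcal W_\Lambda$. By \eqref{eq:J}, $\mathcal T_\Lambda\mu=X(\Lambda)^{-1}\operatorname{col}_r(S_r(\mu)^{\!\top}P+PS_r(\mu))$. The proof of Proposition~\ref{prop:box-wellposed} shows that, as $\mu$ ranges over $\R^d$, the stacked matrices $\operatorname{col}_r(S_r^{\!\top}P+PS_r)$ fill the $P$-independent space $\mathcal C$, with basis $\mathcal B_\ell$. I will take this identification as given. Since $X(\Lambda)$ is invertible, the images $X^{-1}\mathcal B_\ell$ form a basis of $\mathcal W_\Lambda$. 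Proposition~\ref{prop:inner-distance-dual} then gives $\phi(\Lambda)=\min_\nu\norm{J_{\rm r}(\Lambda,\nu)}$.

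The second step is the LMI equivalence. For any $\nu$, we have $\norm{J_{\rm r}}^2\le\tau$ if and only if $J_{\rm r}^{\!\top}J_{\rm r}\preceq\tau I_N$. Writing $J_{\rm r}=X^{-1}G_{\rm r}$ and using $X^{\!\top}X=(P_f\kron I_N)(P_f^{\!\top}\kron I_N)$, the inverse needed is $(X^{\!\top}X)^{-1}$. This requires care: the convention $P=P_f^{\!\top}P_f$ gives $X X^{\!\top}=P\kron I_N$, not $X^{\!\top}X$. I will therefore compute
\[
J_{\rm r}^{\!\top}J_{\rm r}=G_{\rm r}^{\!\top}(XX^{\!\top})^{-1}G_{\rm r}=G_{\rm r}^{\!\top}(P\kron I_N)^{-1}G_{\rm r}.
\]
Because $P\kron I_N\succ0$, the Schur complement lemma shows that $F_\Lambda(\nu,\tau)\succeq0$ if and only if $\tau I_N-G_{\rm r}^{\!\top}(P\kron I_N)^{-1}G_{\rm r}\succeq0$, that is, $\norm{J_{\rm r}}^2\le\tau$. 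Minimizing $\tau$ over feasible pairs $(\nu,\tau)$ therefore yields $\min_\nu\norm{J_{\rm r}}^2=\phi(\Lambda)^2$.

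Strict feasibility follows by taking $\nu=0$ and $\tau>\norm{J_0}^2$: the upper-left block is positive definite and the Schur complement is then positive definite. For attainment, note that the minimum of the convex function $\nu\mapsto\norm{J_{\rm r}(\Lambda,\nu)}$ is attained. This holds because the map $\nu\mapsto J_{\rm r}$ is affine and injective, since the $\mathcal B_\ell$ are independent, so the norm is coercive in $\nu$. The pair $(\nu^\star,\norm{J_{\rm r}(\nu^\star)}^2)$ is then optimal for the SDP.

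The final step is reconstruction. Since $\sum_\ell\nu^\star_\ell\mathcal B_\ell\in\mathcal C$, the linear constraint in \eqref{eq:raw-gauge-reconstruction} is consistent, because $\mathcal C$ is by definition the image of the linear map $\mu\mapsto\operatorname{col}_r(S_r^{\!\top}P+PS_r)$. A minimum-norm solution of a consistent linear system exists and is unique (it is the pseudoinverse solution). Any such $\mu^\star$ satisfies $G(\Lambda,\mu^\star)=G_{\rm r}(\Lambda,\nu^\star)$, so $J(\Lambda,\mu^\star)=J_{\rm r}(\Lambda,\nu^\star)$ and the value $\phi(\Lambda)$ is reproduced. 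Because $K(\mu^\star)\in\mathcal K$, the quadratic vector field is unchanged.

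The main obstacle is the first step. The SDP itself is routine once the image identification is in hand, so the weight of the argument rests on that identification and on correctly tracking which Gram product $X X^{\!\top}$ versus $X^{\!\top}X$ equals $P\kron I_N$.
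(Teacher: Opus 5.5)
Your proposal is correct and follows the paper's proof essentially step for step. The matching steps are the Schur complement using $P\kron I_N=XX^{\!\top}$, strict feasibility by taking $\tau>\norm{J_{\rm r}}^2$, attainment by coercivity from injectivity of $\nu\mapsto X^{-1}\sum_\ell\nu_\ell\mathcal B_\ell$, and consistency of the reconstruction via $\operatorname{im}T_P=\mathcal C$ from Proposition~\ref{prop:box-wellposed}. Two small wording points: $\mathcal C$ is defined intrinsically by the cyclic-sum condition and is \emph{shown}, not defined, to equal that image, and the trivial case $q=0$ deserves the one-line mention the paper gives it.
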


\begin{proof}
Since $P\kron I_N=XX^{\!\top}$, the Schur complement of
\eqref{eq:inner-generalized-schur} is
$G_{\rm r}^{\!\top}(P\kron I_N)^{-1}G_{\rm r}\preceq\tau I_N$, equivalently
$J_{\rm r}^{\!\top}J_{\rm r}\preceq\tau I_N$. Thus the reduced generalized-Schur formulation is
exactly the fixed-$\Lambda$ spectral-norm problem, without placing $X^{-1}$ in the cone model.
For the full argument, including attainment and reconstruction, see the
\hyperref[proof:reduced-inner]{proof of Proposition~\ref*{prop:reduced-inner}}
in Appendix~\ref{app:reduced-inner} (p.~\pageref{proof:reduced-inner}).
\end{proof}

\subsection{Inner optimality and value-function sensitivities}
\label{sec:alg-sensitivity}

Let $(\nu^\star,\tau^\star)$ solve \eqref{eq:inner-generalized-schur} and put
$J^\star:=J_{\rm r}(\Lambda,\nu^\star)$.
If $J^\star\ne0$ and $U_1,V_1$ contain paired orthonormal leading singular vectors, so that
$J^\star V_1=\phi(\Lambda)U_1$, then \citep{watson1992subdifferential}
\begin{equation}\label{eq:spectral-subdifferential}
\partial\norm{\cdot}(J^\star)
=\left\{U_1\Theta V_1^{\!\top}:\Theta\succeq0,\ \operatorname{tr}\Theta=1\right\};
\end{equation}
at $J^\star=0$ it is the nuclear-norm unit ball. Convex optimality of the inner problem is therefore
equivalent to the existence of $Z^\star\in\partial\norm{\cdot}(J^\star)$ satisfying
\begin{equation}\label{eq:inner-spectral-kkt}
\left\langle Z^\star,J_\ell(\Lambda)\right\rangle_F=0,\qquad \ell=1,\dots,q,
\end{equation}
with the condition vacuous when $q=0$.

The gain derivatives can be evaluated without finite differences. Put
$E_i:=\mathbf e_{n+i}\mathbf e_{n+i}^{\!\top}$ and use a dot for
$\partial_{\lambda_i}$ while holding $\nu=\nu^\star$ fixed. From \eqref{eq:AH} and
\eqref{eq:lyapL},
\begin{equation}\label{eq:lambda-sensitivities}
\begin{aligned}
\dot A_i&=-E_i,
&\dot H_i&=\mathbf e_{n+i}s_i^{\!\top},\\
\Aof^{\!\top}\dot P_i+\dot P_i\Aof&=E_iP+PE_i,
&\dot P_i&=\dot P_{f,i}^{\!\top}P_f+P_f^{\!\top}\dot P_{f,i}.
\end{aligned}
\end{equation}
Let $M_r^0$ be the $r$th slice of $H(\Lambda)$ and $\dot M_{r,i}$ the corresponding slice of
$\dot H_i$. Differentiating \eqref{eq:reduced-oracle} gives
\begin{equation}\label{eq:GJ-sensitivities}
\begin{aligned}
\dot G_i^0
&=\operatorname{col}_{r=1}^N\!\left(
\dot M_{r,i}^{\!\top}P+(M_r^0)^{\!\top}\dot P_i
+\dot P_iM_r^0+P\dot M_{r,i}\right),\\
\dot J_i
&=X^{-1}\!\left[\dot G_i^0-
(\dot P_{f,i}^{\!\top}\kron I_N)J^\star\right].
\end{aligned}
\end{equation}
The superscript in $G_i^0$ emphasizes that the fixed reduced directions $\mathcal B_\ell$ have
zero gain derivative.

\begin{proposition}[Regularity and envelope sensitivities]\label{prop:regular-sensitivity}
Let $\mathcal U\subset\mathcal F$ be open and let $A,H$ be the affine maps in \eqref{eq:AH}.
For any fixed $Q\in\Sset^N$, $Q\succ0$, the solution $P_Q(\Lambda)\succ0$ of
$A^{\!\top}P_Q+P_QA=-Q$ is real analytic on $\mathcal U$, and
\begin{equation}\label{eq:lyapunov-general-sensitivity}
A^{\!\top}P_{Q,i}+P_{Q,i}A
=-\left(A_i^{\!\top}P_Q+P_QA_i\right),\qquad
P_{Q,i}:=\partial_{\lambda_i}P_Q,\quad A_i:=\partial_{\lambda_i}A.
\end{equation}
For the fixed weight $Q=I_N$ in Problem~\ref{prob:main}, $P=P_{I_N}$, its positive-diagonal
Cholesky factor $P_f$, and $J_{\rm r}$ depend real analytically on the gain.
The value $\phi$ is locally Lipschitz, hence continuous, on $\mathcal U$, and its reduced
inner minimizers are locally uniformly bounded.

Fix $\Lambda\in\mathcal U$, let $(\nu^\star,\tau^\star)$ be a primal optimum and
$Y^\star\succeq0$ a dual optimum of \eqref{eq:inner-generalized-schur}, and put
$J^\star=J_{\rm r}(\Lambda,\nu^\star)$.
Partition $Y^\star=(Y_{ab}^\star)_{a,b=1}^2$ conformally with $F_\Lambda$.
The KKT conditions include
\begin{equation}\label{eq:inner-conic-kkt}
\operatorname{tr}Y_{22}^\star=1,\qquad
\left\langle Y_{12}^\star,\mathcal B_\ell\right\rangle_F=0\ (\ell=1,\dots,q),
\qquad
\left\langle Y^\star,F_\Lambda(\nu^\star,\tau^\star)\right\rangle_F=0.
\end{equation}
If $\phi(\Lambda)>0$, then
\begin{equation}\label{eq:normalized-inner-dual}
Z^\star:=-\frac{X^{\!\top}Y_{12}^\star}{\phi(\Lambda)}
\in\partial\norm{\cdot}(J^\star),\qquad \norm{Z^\star}_*=1,
\end{equation}
and $Z^\star$ solves \eqref{eq:inner-distance-dual} and satisfies
\eqref{eq:inner-spectral-kkt}. The vector $g=(g_1,\dots,g_m)$ with
\begin{equation}\label{eq:dual-envelope-subgradient}
g_i=\left\langle Z^\star,\dot J_i\right\rangle_F
=-\frac{
\left\langle Y_{11}^\star,\dot P_i\kron I_N\right\rangle_F
+2\left\langle Y_{12}^\star,\dot G_i^0\right\rangle_F}{2\phi(\Lambda)}
\end{equation}
belongs to $\partial_C\phi(\Lambda)$ and equals $\nabla\phi(\Lambda)$ wherever $\phi$ is
differentiable. If, in addition, the reduced minimizer $\nu^\star$ is unique and the largest
singular value of $J^\star$ is simple, with paired unit singular vectors $u,v$, then $\phi$ is differentiable at $\Lambda$
and
\begin{equation}\label{eq:regular-envelope}
\partial_{\lambda_i}\phi(\Lambda)=g_i=u^{\!\top}\dot J_i v,
\qquad i=1,\dots,m.
\end{equation}
\end{proposition}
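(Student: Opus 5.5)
We argue in four stages: regularity of the Lyapunov data, Lipschitz continuity of the value, conic duality at a fixed gain, and the envelope (Danskin/Clarke) argument for the gain derivatives.

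\emph{Regularity of $P$ and $J_{\rm r}$.} The Lyapunov equation is linear in $P_Q$. Its operator $\mathcal L_A(P)=A^{\!\top}P+PA$, written $I\kron A^{\!\top}+A^{\!\top}\kron I$ after vectorization, is invertible whenever $A$ is Hurwitz, because its eigenvalues are the sums $\lambda_j+\lambda_k$, all of which have negative real part.

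\begin{itemize}
\item Since $A$ is affine in $\Lambda$, Cramer's rule makes $P_Q=\mathcal L_{A(\Lambda)}^{-1}(-Q)$ a rational function with nonvanishing denominator on $\mathcal F$, hence real analytic there.
\item Differentiating the identity $A^{\!\top}P_Q+P_QA=-Q$ in $\lambda_i$ gives \eqref{eq:lyapunov-general-sensitivity} directly.
\item The positive-diagonal Cholesky factor is an analytic function of positive definite matrices, as the recursive formulas involve only square roots of positive quantities. Thus $P_f$, $X^{-1}=(P_f^{\!\top})^{-1}\kron I_N$, and $J_{\rm r}=X^{-1}G_{\rm r}$ are analytic, with $G_{\rm r}$ affine in $(\Lambda,\nu)$.
\end{itemize}

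\emph{Local Lipschitz continuity.} By Proposition~\ref{prop:reduced-inner}, $\phi(\Lambda)=\min_\nu\norm{J_{\rm r}(\Lambda,\nu)}$, and the columns $J_\ell(\Lambda)=X^{-1}\mathcal B_\ell$ form a basis of $\mathcal W_\Lambda$.

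\begin{itemize}
\item On a compact neighbourhood $\overline{\mathcal U'}\subset\mathcal U$ the linear map $\nu\mapsto\sum\nu_\ell J_\ell(\Lambda)$ is injective and continuous in $\Lambda$. Hence it has a uniform lower bound $c\norm{\nu}$.
\item Any minimizer satisfies $c\norm{\nu^\star}\le 2\norm{J_0(\Lambda)}$, which gives the local uniform boundedness of minimizers.
\item $\phi$ is therefore a minimum over a fixed compact set $\Lambda$-uniformly of functions that are Lipschitz in $\Lambda$ with a common constant, since the $C^1$ data are bounded on compacts. A min of uniformly Lipschitz functions is Lipschitz.
\end{itemize}

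\emph{KKT conditions at a fixed gain.} The SDP \eqref{eq:inner-generalized-schur} is strictly feasible (take $\nu=0$ and $\tau$ large), so strong duality holds with a dual optimum $Y^\star$. The Lagrangian $\tau-\langle Y,F_\Lambda\rangle$ yields the three conditions:

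\begin{itemize}
\item stationarity in $\tau$ gives $\operatorname{tr}Y_{22}=1$;
\item stationarity in $\nu_\ell$ gives $\langle Y_{12},\mathcal B_\ell\rangle_F=0$;
\item complementary slackness gives $\langle Y^\star,F_\Lambda\rangle_F=0$.
\end{itemize}

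For $\phi>0$, the normalized matrix $Z^\star$ in \eqref{eq:normalized-inner-dual} is identified as follows.

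\begin{itemize}
\item Orthogonality of $Z^\star$ to each $J_\ell=X^{-1}\mathcal B_\ell$ follows from $\langle X^{\!\top}Y_{12},X^{-1}\mathcal B_\ell\rangle=\langle Y_{12},\mathcal B_\ell\rangle=0$.
\item Complementarity with $Y\succeq0$ gives $Y_{11}\succeq Y_{12}Y_{22}^{\dagger}Y_{12}^{\!\top}$. Combined with $\langle Y,F\rangle=0$ and $\tau^\star=\phi^2$, and after the congruence by $\operatorname{diag}(X^{-1},I)$, this yields $\langle Z^\star,J^\star\rangle=\phi$ and $\norm{Z^\star}_*\le1$.
\item The bound $\norm{Z^\star}_*\le1$ follows from the AM--GM inequality $2\norm{X^{\!\top}Y_{12}}_*\le \operatorname{tr}(X^{\!\top}Y_{11}X)/\phi+\phi\operatorname{tr}Y_{22}$, which is forced to be an equality.
\item Equality in the duality pairing then places $Z^\star$ in $\partial\norm{\cdot}(J^\star)$ and makes it optimal in \eqref{eq:inner-distance-dual} by Proposition~\ref{prop:inner-distance-dual}.
\end{itemize}

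I expect this conic-to-spectral normalization step to be the main obstacle. The delicate points are the careful block algebra and the handling of a possibly singular $Y_{22}$.

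\emph{Envelope sensitivities.} Write $\phi(\Lambda)=\min_{\nu\in K}\max_{\norm{Z}_*\le1}\langle Z,J_{\rm r}(\Lambda,\nu)\rangle$ with $K$ compact. This is a saddle value of a function that is convex--concave in $(\nu,Z)$ and $C^1$ in $\Lambda$.

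\begin{itemize}
\item A Danskin-type argument for the upper directional derivative gives $\phi'(\Lambda;d)\le\langle Z^\star,\partial_d J_{\rm r}(\Lambda,\nu^\star)\rangle$ along any direction $d$, holding $\nu=\nu^\star$ fixed. This shows $-g$ lies in the appropriate one-sided bound.
\item To upgrade this to $g\in\partial_C\phi$, I would use the min-max (saddle) envelope theorem of Rockafellar, or equivalently Clarke's Theorem~2.8.2 applied to the marginal function. Evaluating $\partial_{\lambda_i}J_{\rm r}$ with $\nu$ fixed reproduces $\dot J_i$ in \eqref{eq:GJ-sensitivities} by the product rule on $X^{-1}G$.
\item The second expression in \eqref{eq:dual-envelope-subgradient} comes from substituting $Z^\star=-X^{\!\top}Y_{12}/\phi$ and using $\langle Y_{11},X\dot X^{\!\top}+\dot XX^{\!\top}\rangle=\langle Y_{11},\dot P\kron I\rangle$. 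The term $\dot X^{\!\top}$ paired with $J^\star$ is absorbed via the complementarity relation $Y_{11}X=-Y_{12}J^{\star\top}$, up to the factor $\phi$.
\end{itemize}

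Finally, under uniqueness of $\nu^\star$ and simplicity of $\sigma_1(J^\star)$, the saddle set is the singleton $\{(\nu^\star,uv^{\!\top})\}$. The directional derivative is then linear in $d$, so $\phi$ is (strictly) differentiable with $\partial_{\lambda_i}\phi=u^{\!\top}\dot J_iv$. Wherever $\phi$ is differentiable, the gradient lies in $\partial_C\phi$, and the one-sided estimates force it to coincide with $g$.
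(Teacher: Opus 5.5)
Your first three stages are sound. The regularity and Lipschitz arguments are the paper's, and the KKT conditions are right. Your normalization also closes. Put $A=X^{\!\top}Y_{11}^\star X$ and $B=X^{\!\top}Y_{12}^\star$. Complementarity together with $\langle Z^\star,J^\star\rangle_F\le\phi\norm{Z^\star}_*$ gives $\operatorname{tr}A/\phi+\phi\le2\norm{B}_*$, while $Y^\star\succeq0$ gives $2\norm{B}_*\le2\sqrt{\operatorname{tr}A}\le\operatorname{tr}A/\phi+\phi$. So the AM--GM step is tight and $\operatorname{tr}A=\phi^2$. The paper gets the same from $Y^\star F_\Lambda=0$, by factoring $Y^\star$ through $\ker F_\Lambda(\nu^\star,\phi^2)$. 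Your check that the two expressions for $g_i$ agree is also correct.

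The gap is the claim $g\in\partial_C\phi(\Lambda)$. Holding $\nu=\nu^\star$ fixed yields only $\phi'(\Lambda;d)\le\max_{Z\in\partial\norm{\cdot}(J^\star)}\langle Z,\partial_dJ_{\rm r}(\Lambda,\nu^\star)\rangle_F$, and a particular multiplier does not bound this maximum from above. Take the no-gauge case $\lambda\mapsto\norm{\diag(1+\lambda,1-\lambda)}=1+|\lambda|$ at $\lambda=0$. There $\Theta=\diag(1,0)$ produces the legitimate multiplier $Z^\star=\diag(1,0)$ and $g=1$, and your inequality would read $1\le-1$ in the direction $d=-1$.

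What holds for an arbitrary dual selection is the \emph{lower} estimate $\phi(\Lambda+\Delta)\ge\phi(\Lambda)+g^{\!\top}\Delta+o(\norm{\Delta})$, and the tools you name do not supply it. Clarke's Theorem~2.8.2 is an outer estimate (an inclusion for $\partial_C$), so it cannot certify that a given selection lies in $\partial_C\phi$. Nor can you get the lower bound by freezing $Z^\star$ in your saddle representation. Distance-dual feasibility, $\langle Z,X(\Lambda')^{-1}\mathcal B_\ell\rangle_F=0$, moves with $\Lambda'$. Hence $\min_\nu\langle Z^\star,J_{\rm r}(\Lambda',\nu)\rangle_F$ is generally $-\infty$, or has a concave kink at $\Lambda$ if $\nu$ is confined to a compact ball.

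The missing idea is to use the conic dual \eqref{eq:inner-dual} of the squared problem. Its feasible set $\{Y\succeq0:\operatorname{tr}Y_{22}=1,\ \langle Y_{12},\mathcal B_\ell\rangle_F=0\}$ is independent of $\Lambda$, precisely because the reduced basis $\mathcal B_\ell$ is fixed. For the fixed optimum $Y^\star$, weak duality gives $d_{Y^\star}(\Lambda'):=-\langle Y_{11}^\star,P(\Lambda')\kron I_N\rangle_F-2\langle Y_{12}^\star,G(\Lambda',0)\rangle_F\le\phi(\Lambda')^2$, with equality at $\Lambda$. Hence $h=\sqrt{d_{Y^\star}}$ is a smooth minorant of $\phi$ touching it at $\Lambda$, and the identity in \eqref{eq:dual-envelope-subgradient} that you verified says $\nabla h(\Lambda)=g$. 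So $g$ is a Fr\'echet subgradient and lies in $\partial_C\phi(\Lambda)$ by local Lipschitz continuity. At points of differentiability, applying the inequality along $\pm\Delta$ forces $g=\nabla\phi(\Lambda)$.

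In the regular case your fixed-$\nu^\star$ upper bound is legitimate, because $\partial\norm{\cdot}(J^\star)=\{uv^{\!\top}\}$. You still need a matching lower bound, either from this minorant or from the paper's sandwich with minimizers $\nu_\Delta\to\nu^\star$. The bare assertion that a singleton saddle set makes $\phi'(\Lambda;\cdot)$ linear rests on a saddle-value directional-derivative theorem that you neither state nor verify.
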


\begin{proof}
See the \hyperref[proof:regular-sensitivity]{proof of Proposition~\ref*{prop:regular-sensitivity}}
in Appendix~\ref{app:inner-sensitivity} (p.~\pageref{proof:regular-sensitivity}).
\end{proof}
The differentiability criterion concerns
uniqueness of the reduced minimizer $\nu$; raw gauge representatives can remain nonunique.
The partial derivative in \eqref{eq:regular-envelope} holds $\nu=\nu^\star$ fixed.
The Clarke inclusion uses the fixed
reduced basis $\mathcal B_\ell$, which makes the conic dual feasible set independent of
$\Lambda$; it requires neither a unique inner minimizer nor a simple leading singular value.
At $\phi(\Lambda)=0$, $Z=0$ is a distance-dual optimum, but the normalization
\eqref{eq:normalized-inner-dual} is undefined. Approximate numerical multipliers remain
sensitivity candidates unless the stated optimality conditions are established.

A single Clarke subgradient does not generally certify descent,
so the outer method samples nearby ordinary gradients. Let $\mathcal R\subset\mathcal U$ be the set where
$\phi$ is differentiable. For $B\Subset\mathcal U\Subset\mathcal F$, define
\begin{equation}\label{eq:clarke-epsilon-hull}
\partial_\epsilon\phi(\Lambda):=\operatorname{cl\,co}
\left\{\nabla\phi(z):z\in\mathbb B_\epsilon(\Lambda)\cap\mathcal U\cap\mathcal R\right\},
\qquad
\partial_C\phi(\Lambda)=\bigcap_{\epsilon>0}\partial_\epsilon\phi(\Lambda).
\end{equation}
The equality follows from local Lipschitz continuity and is the target approximated by a finite
bundle of nearby ordinary gradients.

\subsection{Box-constrained GS--L-BFGS-B}
\label{sec:alg-outer}

Fixed gain coordinates can be removed, so we assume here that $B$ is nondegenerate. For
conditioning, the implementation uses
$D_B:=\diag(\overline\Lambda-\underline\Lambda)$,
$\theta:=D_B^{-1}(\Lambda-\underline\Lambda)\in C:=[0,1]^m$, and
$\psi(\theta):=\phi(\underline\Lambda+D_B\theta)$, so
$\nabla\psi(\theta)=D_B^{\!\top}\nabla\phi(\Lambda)$. Let
$\mathcal U_B:=D_B^{-1}(\mathcal U-\underline\Lambda)$. At $\theta^k$, continuously sample
\begin{equation}\label{eq:gradient-bundle}
z_{kj}=\theta^k+\epsilon_k\xi_{kj},\qquad
\xi_{kj}\sim\operatorname{Unif}(\mathbb B_1),\qquad
\mathcal G_k:=\left\{g_{kj}=\nabla\psi(z_{kj})\right\}_{j=1}^{p_k},\qquad p_k\ge m+1,
\end{equation}
retaining only samples in $\mathcal U_B$ for which the derivative oracle is accepted. In the ideal
algorithm below these are ordinary gradients; the finite implementation uses dual-envelope
candidates that pass its numerical consistency checks. The points $z_{kj}$ need not lie in $C$,
and the deterministic center $\theta^k$ is not a bundle atom.

The L-BFGS memory supplies a Hessian metric. For a bundle-derived secant pair $(s,r)$ and
$M\succ0$, Powell damping with $\kappa\in(0,1)$ uses
\begin{equation}\label{eq:damped-bfgs}
\begin{aligned}
\vartheta&=
\begin{cases}
1,&s^{\!\top}r\ge\kappa s^{\!\top}Ms,\\
\displaystyle\frac{(1-\kappa)s^{\!\top}Ms}{s^{\!\top}Ms-s^{\!\top}r},&
s^{\!\top}r<\kappa s^{\!\top}Ms,
\end{cases}\\
\widetilde r&=\vartheta r+(1-\vartheta)Ms,\\
M^+&=M-\frac{Mss^{\!\top}M}{s^{\!\top}Ms}
+\frac{\widetilde r\widetilde r^{\!\top}}{s^{\!\top}\widetilde r}.
\end{aligned}
\end{equation}
Here $s$ is the most recent accepted displacement and $r$ is the change from the preceding
QP-dual aggregate to the next bundle mean. Only the most recent pairs are retained. Spectral
clipping of the rebuilt matrix enforces
$0<\underline bI\preceq M_k\preceq\overline bI$.

For $x\in C$, use the convex-analysis convention
$N_C(x):=\{n:n^{\!\top}(y-x)\le0\ \text{for all }y\in C\}$. Given $\mathcal G_k$ and $M_k$,
the finite-displacement model is
\begin{equation}\label{eq:bundleqp}
\begin{aligned}
\min_{d\in\R^m,\,\zeta\in\R}\quad&\zeta+\tfrac12d^{\!\top}M_kd\\
\mathrm{s.t.}\quad&g_{kj}^{\!\top}d\le\zeta,\quad j=1,\dots,p_k,\\
&-\theta^k\le d\le\mathbf1-\theta^k.
\end{aligned}
\end{equation}

\begin{lemma}[Bundle model and residual]\label{lem:bundle-model}
Problem~\eqref{eq:bundleqp} has a unique displacement $d_k$. There exist
$\alpha_k\in\R_+^{p_k}$, $\mathbf1^{\!\top}\alpha_k=1$,
$\bar g_k:=\sum_j\alpha_{kj}g_{kj}$, and
$n_k\in N_C(\theta^k+d_k)$ such that
\begin{equation}\label{eq:bundle-kkt}
M_kd_k+\bar g_k+n_k=0,\qquad
\zeta_k=\max_jg_{kj}^{\!\top}d_k=\bar g_k^{\!\top}d_k
=-d_k^{\!\top}M_kd_k-n_k^{\!\top}d_k
\le-d_k^{\!\top}M_kd_k.
\end{equation}
Consequently, with
\begin{equation}\label{eq:bundle-residual}
\chi_k:=\norm{d_k}_{M_k}:=(d_k^{\!\top}M_kd_k)^{1/2},
\end{equation}
one has $d_k=0$ if and only if
$0\in\operatorname{co}\mathcal G_k+N_C(\theta^k)$; if $d_k\ne0$, every bundle atom satisfies
$g_{kj}^{\!\top}d_k\le-\chi_k^2<0$.
\end{lemma}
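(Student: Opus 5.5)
This lemma is a statement about a strictly convex QP with polyhedral constraints, so the plan is to rely on existence, uniqueness, and KKT theory for such problems. First, existence and uniqueness. The feasible set in $(d,\zeta)$ is nonempty: take $d=0$ and $\zeta=\max_j g_{kj}^{\!\top}0=0$, which is valid because $\theta^k\in C$. For fixed $d$, the optimal $\zeta$ is $\max_j g_{kj}^{\!\top}d$. So the problem is equivalent to minimizing
\[
f(d):=\max_j g_{kj}^{\!\top}d+\tfrac12 d^{\!\top}M_kd
\]
over the compact box $C-\theta^k$. Since $M_k\succeq\underline bI\succ0$, the function $f$ is continuous and strongly convex, which gives a unique minimizer $d_k$. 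Setting $\zeta_k=\max_j g_{kj}^{\!\top}d_k$ recovers the QP solution.

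Next, the KKT system. All constraints are affine, so Slater-type qualification is automatic and KKT multipliers exist. Equivalently, I will use the convex optimality condition $0\in\partial f(d_k)+N_{C-\theta^k}(d_k)$. The subdifferential of the pointwise maximum of linear functions is the convex hull of the active gradients. This gives weights $\alpha_k\ge0$ summing to one, supported on the active indices, with
\[
M_kd_k+\bar g_k+n_k=0,\qquad n_k\in N_C(\theta^k+d_k).
\]
Here I use the translation identity $N_{C-\theta^k}(d)=N_C(\theta^k+d)$. Complementary slackness, i.e.\ support on active indices, gives $\bar g_k^{\!\top}d_k=\max_j g_{kj}^{\!\top}d_k=\zeta_k$. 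Taking the inner product of the stationarity equation with $d_k$ yields $\zeta_k=-d_k^{\!\top}M_kd_k-n_k^{\!\top}d_k$. For the final inequality, the definition of the normal cone with $x=\theta^k+d_k$ and test point $y=\theta^k\in C$ gives $n_k^{\!\top}(-d_k)\le0$, hence $-n_k^{\!\top}d_k\le0$.

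For the characterization of $d_k=0$: if $d_k=0$, the KKT relation reads $0=\bar g_k+n_k$ with $\bar g_k\in\operatorname{co}\mathcal G_k$ and $n_k\in N_C(\theta^k)$. Conversely, suppose $0=\bar g+n$ for some $\bar g\in\operatorname{co}\mathcal G_k$ and $n\in N_C(\theta^k)$. Then $d=0$ satisfies the optimality condition, because $\partial f(0)=\operatorname{co}\mathcal G_k$ (every index is active at $d=0$). Uniqueness of the minimizer then forces $d_k=0$.

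Finally, if $d_k\neq0$, then for every $j$,
\[
g_{kj}^{\!\top}d_k\le\zeta_k\le-\chi_k^2,
\]
and $\chi_k^2>0$ because $M_k\succ0$. There is no real obstacle here. The only points needing care are the sign convention for the normal cone, in the inequality $-n_k^{\!\top}d_k\le0$, and the claim that all indices are active at $d=0$, which is what gives the converse direction.
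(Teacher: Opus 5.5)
Your proof is correct and follows essentially the same route as the paper: strong convexity for uniqueness, multipliers in the simplex with complementarity giving $\zeta_k=\bar g_k^{\top}d_k$, box multipliers forming $n_k$, and the normal-cone inequality tested at the feasible zero displacement to obtain $n_k^{\top}d_k\ge0$. Your reduction to $f(d)=\max_j g_{kj}^{\top}d+\tfrac12 d^{\top}M_kd$ also spells out the converse direction of the $d_k=0$ characterization, namely that every index is active at $d=0$, so the optimality condition is sufficient there and uniqueness forces $d_k=0$; the paper only indicates this step with ``setting $d_k=0$.''
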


\begin{proof}
Strong convexity gives uniqueness of $d_k$. The KKT multiplier of the epigraph inequalities lies
in the simplex, complementarity gives $\zeta_k=\bar g_k^{\!\top}d_k$, and the box multipliers form
$n_k$. Since the zero displacement is feasible, the normal-cone inequality gives
$n_k^{\!\top}d_k\ge0$, proving
\eqref{eq:bundle-kkt}; setting $d_k=0$ gives the stated equivalence.
\end{proof}

For $\chi_k>\omega_k$, define the first acceptable backtracking index and the next iterate by
\begin{equation}\label{eq:armijo-transition}
\begin{aligned}
\ell_k&:=\min\Bigl\{\ell\in\{0,\dots,L_k-1\}:
\psi(\theta^k+\beta^\ell d_k)
\le\psi(\theta^k)-c\beta^\ell\chi_k^2\Bigr\},\\
\theta^{k+1}&=
\begin{cases}
\theta^k+\beta^{\ell_k}d_k,&\ell_k\text{ exists},\\
\theta^k,&\text{otherwise}.
\end{cases}
\end{aligned}
\end{equation}
The endpoint bounds in \eqref{eq:bundleqp} make every trial feasible. A missing $\ell_k$ is a null
step, not a stationarity test.

\begin{algorithm}[H]
\caption{Box-constrained GS--L-BFGS-B}\label{alg:outer}
\begin{algorithmic}[1]
\REQUIRE $\theta^0\in C$, $c,\beta,\kappa\in(0,1)$, $p_k\ge m+1$, radii $\epsilon_k$,
tolerances $\omega_k$, and caps $L_k$
\REQUIRE $0<\underline b\le\overline b$ and a finite L-BFGS memory cap
\STATE Evaluate the inner oracle at $\theta^0$ and initialize the set of screened records
$\mathcal E$.
\FOR{$k=0,1,\ldots$}
  \IF{$\psi(\theta^k)=0$ is verified exactly}
    \STATE Return the corresponding global Lyapunov certificate.
  \ENDIF
  \STATE Sample \eqref{eq:gradient-bundle} until $p_k\ge m+1$ accepted derivatives are obtained;
  retain every residual-valid in-box value record in $\mathcal E$.
  \STATE Update and safeguard $M_k$ from the most recent serious-step secant pair; solve
  \eqref{eq:bundleqp} and compute $\chi_k$.
  \IF{$\chi_k\le\omega_k$}
    \STATE Set $\theta^{k+1}\gets\theta^k$, reduce $(\epsilon_k,\omega_k)$, and reset the metric.
  \ELSE
    \STATE Apply \eqref{eq:armijo-transition}; after a serious step retain its secant data, and
    after a null step reduce $\epsilon_k$ and reset the metric.
  \ENDIF
  \STATE Add every residual-valid in-box line-search record to $\mathcal E$.
\ENDFOR
\STATE \textbf{return} the element of $\mathcal E$ with smallest recomputed spectral norm.
\end{algorithmic}
\end{algorithm}

To make the anytime return rule precise, let $\mathcal A_K$ contain every completed value-oracle
record through iteration $K$, including sampled points and rejected Armijo trials, and define
\begin{equation}\label{eq:anytime-return}
\mathcal E_K:=\left\{a\in\mathcal A_K:\Lambda(a)\in B,
\ a\text{ passes Corollary~\ref{cor:resid}}\right\},\quad
\widehat a_K\in\operatorname*{arg\,min}_{a\in\mathcal E_K}\widehat j(a),
\end{equation}
where $\widehat j(a)$ is the independently recomputed norm of $J$ stored with the record. Writing
$(\widehat\Lambda,\widehat P):=(\Lambda(\widehat a_K),P(\widehat a_K))$, the reported set is
\begin{equation}\label{eq:anytime-safe-set}
S_{\rm safe}:=\left\{\mathbf x:
\Phi(\mathbf x)^{\!\top}\widehat P\Phi(\mathbf x)
\le\rho_{\rm safe}(\widehat a_K)^2\right\}.
\end{equation}
The record $\widehat a_K$ may differ from the iterate at which $\chi_k$ is reported; a sample with
an unusable derivative can still enter $\mathcal E_K$ when its value and certificate pass their
independent checks. If $\mathcal E_K$ is empty, the finite run reports failure rather than a
certificate.

\subsection{Cost and conditional guarantee}
\label{sec:alg-guarantee}

An uncached generic value-oracle call requires one $N\times N$ Lyapunov solve, one Cholesky
factorization, and the \gls{sdp} \eqref{eq:inner-generalized-schur}, which has $q+1=O(N^3)$ scalar
variables and a positive-semidefinite block of order $N^2+N$. A direct full sensitivity evaluation
adds $m$ Lyapunov solves with the same coefficient operator. The bundle problem has $m+1$ variables
and $p_k+2m$ linear constraints. If an iteration dispatches $n_k^{\rm samp}$ gradient-sample
attempts and performs $n_k^{\rm ls}$ line-search evaluations, it uses
$n_k^{\rm samp}+n_k^{\rm ls}$ inner-oracle calls before caching, with
$p_k\le n_k^{\rm samp}$ because rejected derivatives still consume an evaluation.

\begin{theorem}[Conditional box-Clarke stationarity]\label{thm:outer-clarke}
Consider the ideal sequence generated by Algorithm~\ref{alg:outer}. Assume exact inner values and
ordinary gradients, $\epsilon_k,\omega_k\downarrow0$, $L_k\to\infty$, and
$0<\underline bI\preceq M_k\preceq\overline bI$. Suppose the finite hulls
$\widehat\partial_k:=\operatorname{co}\mathcal G_k$ are gradient-consistent: along every
subsequence $\theta^{k_r}\to\bar\theta$,
\begin{equation}\label{eq:gradient-consistency}
\operatorname{dist}_{\rm H}\!\left(\widehat\partial_{k_r},
\partial_C\psi(\bar\theta)\right)\longrightarrow0.
\end{equation}
Here $\operatorname{dist}_{\rm H}$ is the Hausdorff distance.
Then $\theta^k\in C$, $\psi(\theta^k)$ is nonincreasing, and every accumulation point satisfies
\begin{equation}\label{eq:box-clarke-stationarity}
0\in\partial_C\psi(\bar\theta)+N_C(\bar\theta).
\end{equation}
Equivalently, for $\bar\Lambda=\underline\Lambda+D_B\bar\theta$,
$0\in\partial_C\phi(\bar\Lambda)+N_B(\bar\Lambda)$. If $\phi_B>0$, this is also Clarke
stationarity for maximizing $\rho^\star=1/\phi$ on $B$.
\end{theorem}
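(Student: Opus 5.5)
Feasibility and monotonicity come first. By \eqref{eq:bundleqp}, $\theta^k+d_k\in C$. Since $C$ is convex, $\theta^k+\beta^\ell d_k\in C$ for every $\beta^\ell\in(0,1]$. Hence every iterate stays in $C$ by induction. In addition, \eqref{eq:armijo-transition} either leaves $\theta^k$ unchanged or decreases $\psi$ by at least $c\beta^{\ell_k}\chi_k^2>0$, so $\psi(\theta^k)$ is nonincreasing. Because $B\Subset\mathcal U$ and $\phi$ is continuous there (Proposition~\ref{prop:box-wellposed}), $\psi$ is bounded below on $C$. It follows that $\psi(\theta^k)\downarrow\psi^\ast$ and that the serious-step decreases $\sum_k c\beta^{\ell_k}\chi_k^2$ are summable.

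Next I fix an accumulation point $\bar\theta=\lim_r\theta^{k_r}$ and split into two cases.

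\emph{Case (a): infinitely many $k_r$ with $\chi_{k_r}\le\omega_{k_r}$, or more generally $\chi_{k_r}\to0$.} Lemma~\ref{lem:bundle-model} gives $M_kd_k+\bar g_k+n_k=0$ with $\bar g_k\in\widehat\partial_k$ and $n_k\in N_C(\theta^k+d_k)$. Since $\underline bI\preceq M_k$, we have $\norm{d_k}\le\chi_k/\sqrt{\underline b}\to0$, and since $M_k\preceq\overline bI$, also $M_kd_k\to0$. Gradient consistency \eqref{eq:gradient-consistency} makes the $\bar g_{k_r}$ bounded, with every limit point in $\partial_C\psi(\bar\theta)$. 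Consequently $n_{k_r}=-\bar g_{k_r}-M_{k_r}d_{k_r}$ is bounded. Passing to a subsequence, $n_{k_r}\to\bar n$, and outer semicontinuity of the normal-cone map of the closed convex set $C$, together with $\theta^{k_r}+d_{k_r}\to\bar\theta$, gives $\bar n\in N_C(\bar\theta)$. Since $\partial_C\psi(\bar\theta)$ is closed, this yields \eqref{eq:box-clarke-stationarity}.

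\emph{Case (b): $\chi_{k_r}\ge\bar\chi>0$ along a subsequence.} I argue by contradiction. Here the serious steps must eventually have $\beta^{\ell_k}\to0$ or be null steps, since otherwise the summable decrease is violated. I would show that near a non-stationary $\bar\theta$ the Armijo test succeeds at a uniformly bounded index, which contradicts this. The plan is as follows. If $0\notin\partial_C\psi(\bar\theta)+N_C(\bar\theta)$, then by gradient consistency the QP directions $d_{k_r}$ stay bounded away from zero and satisfy $g^\top d_{k_r}\le-\chi_{k_r}^2$ for all $g\in\widehat\partial_{k_r}$. Approximating $\partial_C\psi(\bar\theta)$ by $\widehat\partial_{k_r}$ in Hausdorff distance then gives $\max_{g\in\partial_C\psi(\bar\theta)}g^\top d_{k_r}\le-\chi_{k_r}^2+o(1)$. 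The Lebourg mean value theorem expresses $\psi(\theta+td)-\psi(\theta)=t\,g_t^\top d$ with $g_t\in\partial_C\psi$ at an intermediate point. Upper semicontinuity of $\partial_C\psi$ then keeps these intermediate subgradients within $o(1)$ of $\partial_C\psi(\bar\theta)$ for small $t$ and $\theta$ near $\bar\theta$. The resulting decrease exceeds $c\,t\chi^2$ once $c<1$ and $t\le\beta^{\bar\ell}$ for a fixed $\bar\ell$. Since $L_k\to\infty$, this index is eventually available, so $\psi$ decreases by at least $c\beta^{\bar\ell}\bar\chi^2$ infinitely often, which contradicts boundedness below. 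Null steps, which shrink $\epsilon_k$, are treated the same way. This uniform-Armijo argument is the main obstacle: it needs care with the mixing of the iterate-dependent hulls $\widehat\partial_k$ and the limit subdifferential, and with the fact that the box constraint is handled through $n_k^\top d_k\ge0$. In case (b) I would first reduce to the situation where $\chi_{k_r}\to0$ along a further subsequence and then invoke case (a).

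The translation to $\Lambda$ is routine. The map $\theta\mapsto\underline\Lambda+D_B\theta$ is an invertible diagonal affine map, so $\partial_C\psi(\bar\theta)=D_B\partial_C\phi(\bar\Lambda)$ and $N_C(\bar\theta)=D_BN_B(\bar\Lambda)$, with $D_B$ positive diagonal. Applying $D_B^{-1}$ gives $0\in\partial_C\phi(\bar\Lambda)+N_B(\bar\Lambda)$. If $\phi_B>0$, then $\rho^\star=1/\phi$ is locally Lipschitz near $\bar\Lambda$. The Clarke chain rule for the smooth outer map $t\mapsto 1/t$ gives $\partial_C(-\rho^\star)(\bar\Lambda)=\phi(\bar\Lambda)^{-2}\partial_C\phi(\bar\Lambda)$. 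Because $N_B$ is a cone, scaling by the positive factor $\phi(\bar\Lambda)^{-2}$ preserves the inclusion, which is Clarke stationarity for maximizing $\rho^\star$ on $B$.
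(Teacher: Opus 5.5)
Your proposal is correct and follows essentially the same route as the paper. Along a subsequence with vanishing residual, the bundle KKT system, gradient consistency, and closedness of the normal-cone graph give $0\in\partial_C\psi(\bar\theta)+N_C(\bar\theta)$; a residual bounded away from zero is ruled out by a uniform Armijo step contradicting monotone decrease of $\psi\ge0$; and the transfer to $\Lambda$ and to $\rho^\star$ uses the same affine change of variables and chain rule for $s\mapsto-1/s$. The only technical difference is how you get the uniform step length: you use Lebourg's mean value theorem with upper semicontinuity of $\partial_C\psi$ at $\bar\theta$, which covers all bounded directions at once, whereas the paper passes to a limit direction $\bar d$ and combines joint upper semicontinuity of $\psi^\circ$ with a finite-cover argument.
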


\begin{proof}
See the \hyperref[proof:outer-clarke]{proof of Theorem~\ref*{thm:outer-clarke}}
in Appendix~\ref{app:outer-clarke} (p.~\pageref{proof:outer-clarke}).
\end{proof}

Condition
\eqref{eq:gradient-consistency} is an explicit asymptotic assumption; a fixed bundle of $m+1$
samples does not imply it \citep{burke2005gradient}. Hence the theorem gives neither global
optimality nor a convergence rate. With finite bundles, solver tolerances, and line-search caps,
$\chi_k$ is only a sampled-bundle diagnostic; validity of the returned closed set follows instead
from the independent screen in Corollary~\ref{cor:resid}.

\section{Numerical experiments}
\label{sec:num}

\paragraph{Computational environment}
All experiments were run in CPython~3.11.12 under macOS~26.6.1 (arm64) on an Apple M2 Pro with
ten CPU cores and 16\,GB of RAM.  The implementation used \textsc{DQbee}~0.3.0
\citep{cai2024dissipative}, NumPy~1.26.4, SciPy~1.17.1, and SymPy~1.14.0.  The conic programs in our
optimal dissipative quadratization (\textsc{ODQ}) method were modeled in \textsc{CVXPY}~1.7.5 and
solved numerically by \textsc{Mosek}~11.2.1
\citep{cvxpy2016,mosek}.  The \gls{sos} baselines, applied directly in the original coordinates, used
\textsc{SumOfSquares.py}~1.3.1 \citep{yuan2024sumofsquares} through \textsc{PICOS}~2.6.2 and the
same \textsc{Mosek} backend. 

\paragraph{Compared methods and implementation}
For each system, \textsc{DQbee} generated one monomial lift and its inner-quadratic factorizations,
which were fixed for all lifted methods with $Q=I_N$.  \textsc{ODQ} solves
Problem~\ref{prob:main} using Algorithm~\ref{alg:outer}, searching over
the stabilizer gain $\Lambda$ with a convex gauge solve at each gain.  In the benchmark suite, \emph{Base} uses
the \textsc{DQbee} feasibility gain $\Lambda_{\mathrm{feas}}$ with $\mu=0$, whereas \emph{Gauge}
fixes $\Lambda_{\mathrm{feas}}$ and optimizes only $\mu$; the first experiment instead uses a
matched \emph{Gauge} arm at $\Lambda=0$.  Thus all lifted comparisons share the same lift and $Q$,
with no search over either.

The direct \gls{sos} baselines act on the original system.  \textsc{sos}-2 fixes the quadratic
Lyapunov function from the linearization and maximizes only its certified level.
\textsc{sos}-4($r$) searches a quartic Lyapunov function using at most $r$ alternations between
$V$ and the SOS multiplier $s$, followed by a final level search.  Hence $2$ and $4$ denote
$\deg V$, while $r$ is a round budget, not a radius or ratio; a larger $r$ need not give a nested
or larger set.  We compare the audited closed sets in the original coordinates, not the raw values
of $\rho_{\mathrm{safe}}$ or the SOS level $c$, whose scalings are method dependent.

Each \textsc{ODQ} oracle call used the reduced generalized-Schur program
\eqref{eq:inner-generalized-schur}.  Gain-independent sparse operators were cached, and a
parameterized \textsc{CVXPY}--\textsc{Mosek} model was reused and warm-started across gains.  The
common outer settings were $m+1$ full-ball samples, five L-BFGS pairs, metric eigenvalues in
$[10^{-3},10^3]$, and halving updates of the sampling radius $\epsilon$ and stationarity tolerance
$\omega$ from $(\epsilon_0,\omega_0)=(5\times10^{-2},10^{-2})$ to
$(\epsilon_{\min},\omega_{\min})=(10^{-4},10^{-4})$.  Armijo backtracking used
$(c,\beta)=(10^{-4},1/2)$ and at most $20$ trials; the scaled bundle-QP feasibility and KKT
tolerance was $2\times10^{-5}$.  Gain boxes and computational budgets are specified with each
experiment.

\paragraph{Verification and timing}
For $m>0$, the gain box and its initially $5\%$-padded sampling neighborhood passed a
common-Lyapunov vertex test with requested margin $10^{-7}$; the padding was halved if necessary.
Lifted certificates used Corollary~\ref{cor:resid} with $\eta=10^{-6}$, and the nominal open
threshold was diagnostic only.  Each certificate was independently reassembled, with the
floating-point estimates of $\norm{J}$ and the normalized Lyapunov residual moved upward by one
unit in the last place.  These are residual-aware floating-point checks, not interval or rational
proofs.

The direct \gls{sos} programs used $\varepsilon_{\mathrm{SOS}}=10^{-4}$ and the smallest even
multiplier degree at least $\max\{2,\deg f-1\}$.  A reported set $\{V\le\alpha^2c\}$ had to pass an
independent audit of solver status, coefficient residuals, and Gram-matrix semidefiniteness at
tolerance $10^{-7}$; we used the largest passing
$\alpha\in\{1-10^{-k}:k=3,\ldots,6\}\cup\{0.995,0.99,0.98\}$.
Construction time includes method-specific construction and auditing, \textsc{DQbee} for lifted
methods, and the stable-box test for \textsc{ODQ}.  The planar experiment uses one untimed warm-up
per solver path; in the heterogeneous suite, each fresh-process execution group performs the
corresponding untimed warm-up.  Reported construction times exclude process startup, warm-up,
geometry, and serialization, whereas the 600-second group deadline includes them.  The relay
protocol instead reports launch-to-serialization wall time.  Monte Carlo checks, box widening, and
plotting are excluded throughout.

\paragraph{Experimental questions}
The planar-quintic experiment isolates gain selection by comparing \textsc{ODQ} with the matched
$\Lambda=0$ \emph{Gauge} arm, and then compares both with \textsc{sos}-2 and
\textsc{sos}-4($r$) in certified area and construction time.  The second experiment
compares Base, Gauge, and \textsc{ODQ} on a heterogeneous 19-system suite and tests the
resulting structural hypothesis on a separately locked 12-system relay family.  It includes
$m=0$ controls and direct-\gls{sos} references.  The table retains separate cohort blocks because
the timing boundaries differ, while the discussion considers the evidence jointly.

\subsection{The pipeline end to end: a planar quintic}
\label{sec:exp-pipeline}

Consider
\begin{equation}\label{eq:quintic}
\dot x_1=-x_1+x_2,\qquad \dot x_2=-2x_2-x_1^5 .
\end{equation}
Its linearization has eigenvalues $-1$ and $-2$, while
$V_0=x_1^6+3x_2^2$ satisfies $\dot V_0=-6x_1^6-12x_2^2$.  Thus the origin is globally
asymptotically stable, and the finite areas below measure certificate conservatism rather than the
true basin.

\textsc{DQbee} selects $y_1=x_1^2$ and $y_2=x_1^3$, giving, before stabilizer injection,
\begin{equation}\label{eq:quintic-lift}
\dot x_1=-x_1+x_2,\quad
\dot x_2=-2x_2-y_1y_2,\quad
\dot y_1=-2y_1+2x_1x_2,\quad
\dot y_2=-3y_2+3x_2y_1 .
\end{equation}
The stabilizers are $h_1=y_1-x_1^2$ and $h_2=y_2-x_1y_1$; hence $N=4$, $m=2$, and
$q=20$.  We solve Problem~\ref{prob:main} on $B=[0,12]^2$, with caps of $60$ outer iterations and
$300$ oracle calls, and compare with the matched $\Lambda=0$ Gauge arm, \textsc{sos}-2, and
\textsc{sos}-4($r$) for $r\in\{2,4,6,8,10\}$.  The padded box passed the common stability screen.
Areas are those of the full origin-connected pullbacks; an $8001$-slice integration agreed with
$10^6$-sample Monte Carlo estimates to within $0.081\%$.

\begin{figure*}[t]
\centering
\includegraphics[width=\textwidth]{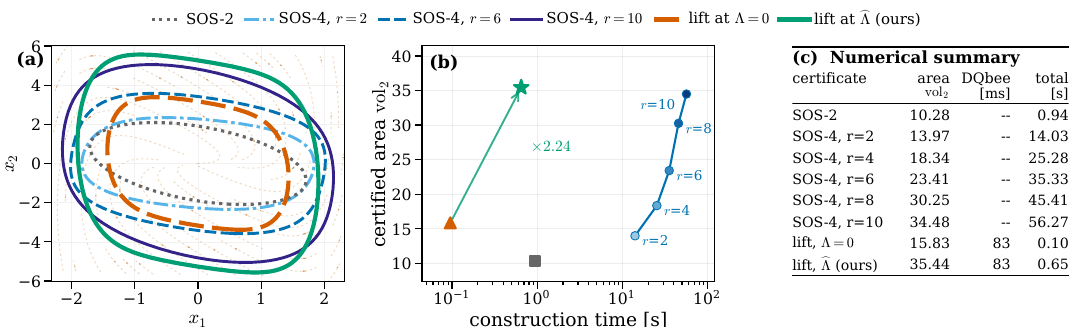}
\caption{Planar-quintic comparison.  (a) Certified boundaries in the original coordinates;
representative \gls{sos}-4 budgets are shown.  (b) Certified area versus construction time for
all eight runs.  (c) Numerical summary.  Panel~(a) uses independent axis scales for readability;
all areas are for the full origin-connected components, and all times follow the common convention
above.}
\label{fig:quintic}
\end{figure*}

Figure~\ref{fig:quintic} summarizes the results.  \textsc{ODQ} terminates after $21$ outer
iterations and $135$ oracle calls at
$\widehat\Lambda=(0.7337,1.5003)$, with $\rho_{\mathrm{safe}}=3.040679$, area $35.44$, and total
time $0.65$\,s.  At $\Lambda=0$, separate gauge optimization gives
$\rho_{\mathrm{safe}}=1.832534$, area $15.83$, and time $0.10$\,s.  Gain optimization therefore
increases the certified area by a factor of $2.238$ with the lift and $Q$ fixed.  The largest direct
certificate tested, \textsc{sos}-4($10$), has area $34.48$ and takes $56.27$\,s.  In this single
post-warm-up execution, \textsc{ODQ} is thus $2.8\%$ larger and about $87$ times faster.

Repeating the search on $[0,24]^2$ and $[0,48]^2$ again returns an interior gain and changes
$\rho_{\mathrm{safe}}$ by at most $6.8\times10^{-8}$.  This indicates insensitivity to the chosen
upper endpoint for this instance, without asserting an unrestricted optimum.

Figure~\ref{fig:quintic-landscape} compares the objective with area at 145 fixed gains:
an $11\times11$ grid on $[0,12]^2$ and a $5\times5$ grid on $[0,3]^2$, sharing the origin.
The rank correlation between $1/\phi$ and area is $0.981$.
The sampled gain $(0.75,1.5)$ has area $35.4478$, slightly above the saved ODQ area $35.4405$,
although its objective is $0.010\%$ larger. This finite-radius difference is consistent with
the leading-order result in Section~\ref{sec:volume}: minimizing $\phi$ need not maximize area
at the reported level.
The grid establishes neither optimum over the box.
\begin{figure}[t]
\centering
\includegraphics[width=\textwidth]{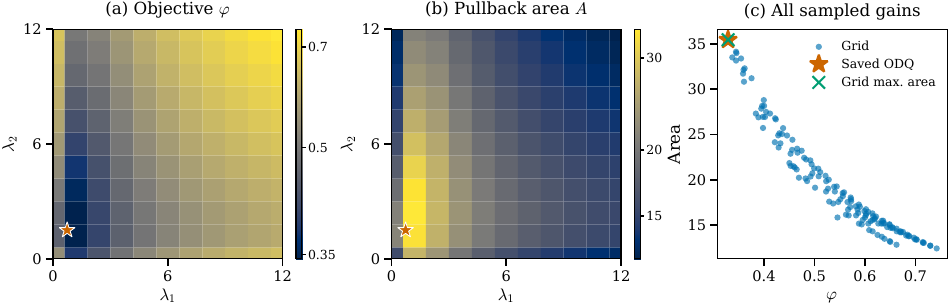}
\caption{Quintic objective and original-coordinate area. Panels (a)--(b) show values on the
$11\times11$ full-box grid; panel (c) includes the 25 local-grid points, with the shared origin
counted once. Stars mark the saved ODQ gain, and the cross marks the largest sampled area.
Each point uses the residual-aware safe radius; colors in (a)--(b) use logarithmic scales.
These are sampled values, not continuous bounds between grid points.}
\label{fig:quintic-landscape}
\end{figure}

\subsection{Systematic comparison across a benchmark suite}
\label{sec:exp-compare}

\paragraph{Benchmark design}
We use two complementary, source-locked cohorts with three repeats each: a heterogeneous suite
tests dependence on system structure, while a controlled relay grid probes dimensional scaling.
The first contains 19 polynomial systems with $1\le n\le10$, $2\le\deg f\le7$, and
$2\le N\le10$: standard and literature-derived models, synthetic degree and coupling probes,
three already-quadratic controls with $m=0$, and one system with known basin area.  Exact vector
fields and provenance appear in Appendix~\ref{app:benchmarks}.

We use an origin-connected physical measure only when every complete arm occupies multiple grid
cells without boundary clipping, giving three length, seven area, and four volume rows.  The
remaining five use the same fixed-direction first-exit proxy $R$ for all methods in the row,
avoiding comparisons of unresolved or truncated measures.  The quantity $R$ is a numerical size
score, not a certified inradius.

To prevent a slow direct-\gls{sos}
solve from suppressing completed methods, each system is split into one lifted group (Base, Gauge,
and \textsc{ODQ}) and three direct groups, one for each \textsc{sos} arm.  Every group runs in a
fresh process with an independent 600-second deadline; at most four groups run concurrently, with
each child restricted to one numerical-solver thread.  This common cap bounds the high-dimensional
attempts while making every missing value an observed computational or audit outcome.
A cell is displayed only when all three repeats return an audited certificate.

For $m>0$, both cohorts stop \textsc{ODQ} when the sampled residual in
\eqref{eq:bundle-residual} satisfies $\chi_k\le\omega_k$ at
$(\epsilon_k,\omega_k)=(\epsilon_{\min},\omega_{\min})$, or at a cap of $40$ outer iterations
or $200$ oracle calls.  The caps bound repeated \gls{sdp} work, whereas the residual test measures
finite-sample stationarity at the prescribed resolution; neither a budget exit nor a failed line
search certifies stationarity.  The best screened candidate is returned as in
\eqref{eq:anytime-return}.  All lifted and \textsc{sos}-2 groups complete.  The two quartic arms
time out in every repeat on \texttt{duffing-chain-3} and \texttt{liu-10d}; \textsc{sos}-4(8)
additionally passes the audit in only $1/3$ repeats on \texttt{cubic-ring-3} and $2/3$ on
\texttt{shear-cubic-a1}.  Appendix~\ref{app:benchmarks} records the execution and completeness
checks.

The relay grid is $(p,n)\in\{3,5,7\}\times\{2,4,6,8\}$, with $m=(p-1)/2$ and $N=n+m$.
Thus increasing $n$ at fixed $p$ leaves the number of stabilizer gains unchanged.  All rows use
$R$ to avoid dimension-dependent volume grids.  Each method runs in an isolated process with a
predeclared 600-second deadline, bounding its cost independently of other methods.  A timeout
or completed budget crossing retires that method only at larger $n$ within the same degree and
repeat; numerical failures alone do not trigger retirement.  These resource-limited outcomes are
distinguished from completed comparisons in Appendix~\ref{app:relay-benchmark}.

\begin{table}[!tp]
\centering
\setlength{\abovecaptionskip}{0pt}
\setlength{\belowcaptionskip}{4pt}
\caption{Relative results (three-repeat medians, two significant digits). For ODQ, the table reports the absolute pair $(M/T)$, where $M\in\{L,A,V,R\}$ denotes length, area, volume, or the fixed-direction proxy and $T$ is the construction time in seconds. Each complete comparator is reported as $(R_M/R_T)$, with $R_M=M_{\rm method}/M_{\rm ODQ}$ and $R_T=T_{\rm method}/T_{\rm ODQ}$; $R_M<1$ and $R_T>1$ favor ODQ. Bold marks strict joint ODQ dominance over every complete direct-\gls{sos} arm. Missing cells: $--^{R}$ retired by the relay protocol, $--^{A}_{k/3}$ attempted but not repeat-complete ($k$ of three repeats), and TO three hard timeouts. Absolute pair results appear in Table~\ref{tab:bench-absolute}.}\label{tab:bench-quality}
\scriptsize
\setlength{\tabcolsep}{1pt}
\renewcommand{\arraystretch}{0.92}
\begin{tabular*}{\linewidth}{@{\extracolsep{\fill}}lccrrrrrr@{}}
\toprule
& & & \multicolumn{1}{c}{Proposed} & \multicolumn{2}{c}{Lifted baselines} & \multicolumn{3}{c}{Direct \gls{sos}} \\
\cmidrule(lr){4-4}\cmidrule(lr){5-6}\cmidrule(l){7-9}
System or $(p,n)$ & $n/N/m$ & $M$ & ODQ & Base & Gauge & SOS--2 & SOS--4(2) & SOS--4(8) \\
\midrule
\addlinespace[2pt]
\multicolumn{9}{@{}l}{\emph{Reference systems and scalar degree probes}} \\[-1pt]
\addlinespace[2pt]
\texttt{vanderpol} & $2/3/1$ & A & $(2.7/0.3)$ & $(0.33/0.38)$ & $(0.73/0.41)$ & $(2.4/1.9)$ & $(2.5/19)$ & $(2.5/51)$ \\
\texttt{planar-quintic} & $2/4/2$ & A & $\boldsymbol{(35/0.72)}$ & $(0.21/0.18)$ & $(0.45/0.19)$ & $(0.29/1.7)$ & $(0.39/21)$ & $(0.85/65)$ \\
\texttt{cubic} & $1/2/1$ & L & $(1.9/0.2)$ & $(0.67/0.59)$ & $(0.67/0.62)$ & $(1/1.2)$ & $(1/8.3)$ & $(1/21)$ \\
\texttt{quintic} & $1/3/2$ & L & $(1.3/0.74)$ & $(0.7/0.17)$ & $(0.72/0.19)$ & $(1.5/0.46)$ & $(1.5/3.8)$ & $(1.5/4)$ \\
\texttt{bistable} & $1/2/1$ & R & $(0.051/0.4)$ & $(0.017/0.27)$ & $(0.052/0.29)$ & $(2/0.67)$ & $(2/3.5)$ & $(2/4.2)$ \\
\texttt{duffing} & $2/3/1$ & A & $(3.2/0.19)$ & $(0.36/0.51)$ & $(0.64/0.55)$ & $(1.1/2.8)$ & $(1.2/31)$ & $(1.2/36)$ \\
\hline
\addlinespace[2pt]
\multicolumn{9}{@{}l}{\emph{Quadratic controls ($m=0$)}} \\[-1pt]
\addlinespace[2pt]
\texttt{quad-cross} & $2/2/0$ & A & $(8.9/0.0098)$ & $(0.67/0.35)$ & $(1/0.95)$ & $(1.2/49)$ & $(2.1/570)$ & $(1.8/1500)$ \\
\texttt{quad-2d} & $2/2/0$ & A & $(3.9/0.0096)$ & $(0.99/0.4)$ & $(1/0.99)$ & $(1.3/49)$ & $(2.4/580)$ & $(2.1/1100)$ \\
\hline
\addlinespace[2pt]
\multicolumn{9}{@{}l}{\emph{Synthetic degree and cascade probes}} \\[-1pt]
\addlinespace[2pt]
\texttt{cubic-3d} & $3/7/4$ & V & $(36/4.3)$ & $(0.24/0.036)$ & $(0.53/0.049)$ & $(0.73/0.21)$ & $(0.92/5.7)$ & $(0.8/18)$ \\
\texttt{cascade-3d} & $3/6/3$ & V & $(32/2.3)$ & $(0.3/0.056)$ & $(0.54/0.071)$ & $(0.69/0.39)$ & $(0.76/10)$ & $(0.94/33)$ \\
\texttt{septic} & $1/4/3$ & L & $(1.1/1.1)$ & $(0.69/0.12)$ & $(0.71/0.13)$ & $(1.8/0.38)$ & $(1.8/3)$ & $(1.8/3.3)$ \\
\texttt{septic-2d} & $2/5/3$ & A & $\boldsymbol{(18/1.6)}$ & $(0.32/0.093)$ & $(0.57/0.11)$ & $(0.42/1.8)$ & $(0.57/18)$ & $(0.9/59)$ \\
\hline
\addlinespace[2pt]
\multicolumn{9}{@{}l}{\emph{Semidiscrete, chain, and ring models}} \\[-1pt]
\addlinespace[2pt]
\texttt{chafee-3} & $3/6/3$ & V & $(6.4/1.8)$ & $(0.44/0.11)$ & $(0.45/0.12)$ & $(1.1/0.67)$ & $(0.93/13)$ & $(0.93/46)$ \\
\texttt{duffing-chain-2} & $4/6/2$ & R & $(0.98/1.1)$ & $(0.59/0.13)$ & $(0.72/0.17)$ & $(1.1/3.4)$ & $(1.2/73)$ & $(1.3/260)$ \\
\texttt{duffing-chain-3} & $6/9/3$ & R & $(0.95/6.4)$ & $(0.58/0.023)$ & $(0.73/0.043)$ & $(1/2.1)$ & TO & TO \\
\texttt{cubic-ring-3} & $3/9/6$ & V & $(33/11)$ & $(0.24/0.021)$ & $(0.58/0.033)$ & $(0.81/0.088)$ & $(1.4/2)$ & $--^{A}_{1/3}$ \\
\hline
\addlinespace[2pt]
\multicolumn{9}{@{}l}{\emph{Literature-derived network models}} \\[-1pt]
\addlinespace[2pt]
\texttt{networked-vdp-2} & $4/6/2$ & R & $(0.48/1.1)$ & $(0.62/0.12)$ & $(0.93/0.15)$ & $(1.8/1.8)$ & $(2/71)$ & $(2/110)$ \\
\texttt{liu-10d} & $10/10/0$ & R & $\boldsymbol{(19/0.29)}$ & $(0.74/0.26)$ & $(1/1.1)$ & $(0.53/160)$ & TO & TO \\
\hline
\addlinespace[2pt]
\multicolumn{9}{@{}l}{\emph{Prescribed-basin geometry test}} \\[-1pt]
\addlinespace[2pt]
\texttt{shear-cubic-a1} & $2/6/4$ & A & $(0.63/3.4)$ & $(0.11/0.12)$ & $(0.25/0.13)$ & $(1.7/0.89)$ & $(3.1/9.2)$ & $--^{A}_{2/3}$ \\
\hline
\addlinespace[2pt]
\multicolumn{9}{@{}l}{\emph{Single-link relay family \eqref{eq:app-relay-family}; instances are indexed by $(p,n)$}} \\[-1pt]
\addlinespace[2pt]
$(3,2)$ & $2/3/1$ & R & $(1.2/1.7)$ & $(0.75/1)$ & $(0.98/0.94)$ & $(0.87/1.1)$ & $--^{A}_{2/3}$ & $(1/10)$ \\
$(3,4)$ & $4/5/1$ & R & $\boldsymbol{(0.89/2.3)}$ & $(0.51/0.72)$ & $(0.98/0.77)$ & $(0.77/2)$ & $--^{A}_{0/3}$ & $--^{A}_{0/3}$ \\
$(3,6)$ & $6/7/1$ & R & $\boldsymbol{(0.7/3.5)}$ & $(0.49/0.43)$ & $(0.99/0.47)$ & $(0.79/3.6)$ & $--^{A}_{0/3}$ & $--^{A}_{0/3}$ \\
$(3,8)$ & $8/9/1$ & R & $\boldsymbol{(0.53/5.7)}$ & $(0.42/0.3)$ & $(1/0.28)$ & $(0.85/5.5)$ & TO & $--^{A}_{0/3}$ \\
$(5,2)$ & $2/4/2$ & R & $(1/1.9)$ & $(0.7/0.92)$ & $(0.91/0.93)$ & $(0.94/1.3)$ & $(1.1/7.4)$ & $--^{A}_{0/3}$ \\
$(5,4)$ & $4/6/2$ & R & $(0.8/2.8)$ & $(0.45/0.63)$ & $(0.96/0.68)$ & $(0.82/6.2)$ & $(1.7/110)$ & TO \\
$(5,6)$ & $6/8/2$ & R & $\boldsymbol{(0.62/2.9)}$ & $(0.45/0.62)$ & $(0.99/0.58)$ & $(0.89/51)$ & TO & $--^{R}$ \\
$(5,8)$ & $8/10/2$ & R & $(0.48/7.7)$ & $(0.37/0.21)$ & $(0.99/0.24)$ & TO & $--^{R}$ & $--^{R}$ \\
$(7,2)$ & $2/5/3$ & R & $(0.94/2.2)$ & $(0.64/0.71)$ & $(0.91/0.64)$ & $(0.99/1.6)$ & $(1.2/11)$ & $(1.3/26)$ \\
$(7,4)$ & $4/7/3$ & R & $\boldsymbol{(0.74/3.8)}$ & $(0.41/0.51)$ & $(0.96/0.42)$ & $(0.85/20)$ & TO & TO \\
$(7,6)$ & $6/9/3$ & R & $(0.57/5.6)$ & $(0.41/0.29)$ & $(0.98/0.31)$ & TO & $--^{R}$ & $--^{R}$ \\
$(7,8)$ & $8/11/3$ & R & $(0.45/13)$ & $(0.33/0.13)$ & $(0.98/0.15)$ & $--^{R}$ & $--^{R}$ & $--^{R}$ \\
\bottomrule
\end{tabular*}
\end{table}

\paragraph{Results and analysis}
Table~\ref{tab:bench-quality} gives the full comparison, while Figure~\ref{fig:bench-tradeoff}
isolates the complete direct-\gls{sos} pairs.  Across the 16 heterogeneous systems with $m>0$,
ODQ improves the reported metric over both Base and Gauge in every row, by median factors of
$2.892$ and $1.652$, respectively; the median ODQ/Gauge time factor is $7.172$.  All returned sets
pass the independent audit.  Six outer searches reach the oracle cap, so those entries are best
audited box candidates rather than stationary points.  Against \textsc{sos}-2, ODQ is larger on
$6/19$ systems, faster on $11/19$, and both on three, with median size ratio
$M_{\rm ODQ}/M_{\textsc{sos}-2}=0.951$.  It is faster on all 17 and 15 repeat-complete quartic
cells for two and eight rounds, respectively; the corresponding median \textsc{sos}/ODQ time
factors are $12.794$ and $45.814$, and ODQ is larger on five cells at each budget.  On the relay
grid, all 36 ODQ runs complete, and the
12 cell-median times remain between $1.69$ and $13.08$ seconds for $N\le11$.  All 27 repeat-level
comparisons in the nine fully paired \textsc{sos}-2 cells favor ODQ in both proxy and time, with
geometric-mean ODQ/\textsc{sos}-2 proxy and \textsc{sos}-2/ODQ time factors of $1.162$ and $4.305$.
In the five repeat-complete relay quartic cells, the proxy is $1.03$--$1.69$ times larger, but
construction is $7.41$--$109$ times slower; the missing high-dimensional endpoints rule out a
family-wide quartic comparison.

The favorable relay scaling is consistent with its one-link structure.  For fixed odd degree $p$,
\textsc{DQbee} introduces only $m=(p-1)/2$ auxiliary powers, so $N=n+m$, whereas the gain dimension
and the $m+1$-sample bundle do not grow with $n$.  Increasing $n$ lengthens only the stable linear
chain without adding nonlinear monomials.  The cached reduced-Schur oracle therefore uses a
sparse quadratic lift and a cone of order $N^2+N$, while the largest direct-\gls{sos} decrease Gram matrices
have orders $\binom{n+(p+1)/2}{(p+1)/2}$ for \textsc{sos}-2 and
$\binom{n+(p+3)/2}{(p+3)/2}$ for \textsc{sos}-4.  At $(p,n)=(7,8)$, these orders are $495$ and
$1287$, compared with $132$ for the corresponding ODQ cone.  The lifted
comparison shows that Gauge
closes most of the gap from Base to ODQ: $M_{\rm Gauge}/M_{\rm ODQ}$ ranges from $0.907$ to $0.995$,
whereas $M_{\rm Base}/M_{\rm ODQ}$ ranges from $0.331$ to $0.751$.  Gain optimization adds a smaller
but consistent improvement.  The relay results suggest that ODQ may be especially useful for
higher-degree, higher-dimensional nonlinear systems whose sparse structure admits a compact
quadratic lift.  This conclusion is empirical rather than asymptotic, and the advantage is not
universal:
\textsc{sos}-2 is both cheaper and larger on five low-dimensional heterogeneous systems.

Numerical diagnostics, failures, and the timing breakdown appear in
Appendix~\ref{app:numerical-diagnostics}.

\begin{figure}[h!]
\centering
\includegraphics[width=\textwidth]{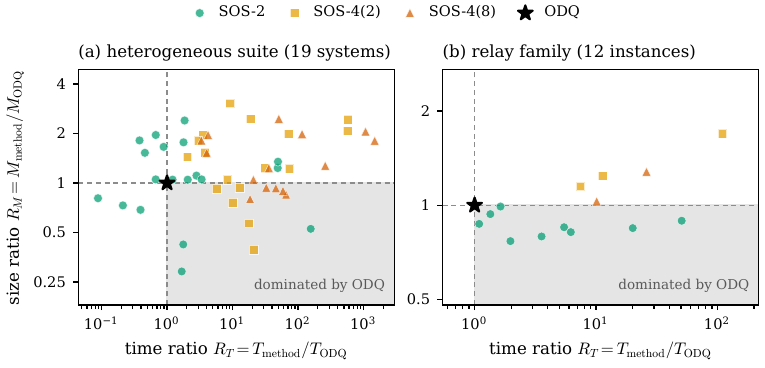}
\caption{Certified size versus construction time for the direct-\gls{sos} comparators, relative
to ODQ.  Each colored marker represents one complete direct-\gls{sos} cell of
Table~\ref{tab:bench-quality}, placed at
$(R_T,R_M)=(T_{\mathrm{method}}/T_{\mathrm{ODQ}},\,M_{\mathrm{method}}/M_{\mathrm{ODQ}})$; the star
marks ODQ at $(1,1)$.  Points in the shaded quadrant are dominated by ODQ; axes are logarithmic
and differ between panels.}
\label{fig:bench-tradeoff}
\end{figure}

\section{Conclusion and Future Work}
\label{sec:conclusion}

This paper separates the choices that enter dissipative quadratization and its \gls{roa}
analysis. These are the lifting map, the off-manifold extension, the representation of its
quadratic term, and the Lyapunov certificate itself. For a fixed embedding, we characterize the
full affine space of admissible quadratic extensions and identify the stabilizer family within
it as a structured affine subspace. Extension choices can change transverse stability and the
resulting Lyapunov metric, while representation gauges preserve the vector field but can change
the spectral-norm bound used in the certificate. Fixing a monomial lift, a reference extension,
stabilizer factorizations, the weight $Q=I_N$, and an admissible gain box, we optimize the
diagonal stabilizer gains together with the Kronecker gauge.

The tangent--transverse splitting reduces feasibility to the auxiliary block, and the
fixed-gain gauge problem is an exact reduced \gls{sdp} whose global optimum minimizes the stated
spectral-norm bound, though equality with the intrinsic cubic growth is not guaranteed. We prove
attainment on the prescribed box, give a residual-aware closed certificate, and show that the
level threshold ranks the leading-order volumes of scaled pullbacks when $\phi_B>0$. An explicit
unbounded-ray example shows why the box must be part of the problem data. The resulting
algorithm combines the reduced oracle with a box-constrained nonsmooth search, and under the
hypotheses of Theorem~\ref{thm:outer-clarke}, every accumulation point of the idealized sequence
is box-Clarke stationary. Its finite implementation instead returns the best independently
screened candidate encountered, and we do not claim global optimality.

The experiments show both the benefit and the limits of this design. On the planar quintic,
gain optimization increases the certified area from $15.83$ to $35.44$ with the lift and $Q$
fixed, and \textsc{ODQ} improves on both fixed-gain lifted baselines for every heterogeneous
system with $m>0$. Direct \gls{sos} nonetheless remains competitive, since \textsc{sos}-2 is
both larger and cheaper on five low-dimensional heterogeneous systems. In all nine fully paired
single-link relay cells, \textsc{ODQ} has a larger fixed-direction proxy and is faster than
\textsc{sos}-2, and the relay's sparse nonlinear structure admits a compact quadratic lift as
the physical dimension grows. We expect the method to extend to similarly structured
higher-degree, higher-dimensional systems, but these finite experiments do not establish an
asymptotic advantage or uniform dominance over direct \gls{sos}.

Three directions would relax the present formulation. Optimizing over the full admissible
extension space, allowing normalized metric design, and bounding the intrinsic cubic growth
more tightly would each remove one of its current restrictions. Global gain optimization and
interval or rational residual bounds remain separate questions, of search quality and of
certificate validity respectively. Beyond \gls{roa} estimation, extension design could also be
used to seek tighter truncation-error bounds and reachable-set enclosures in Carleman
linearization \citep{forets2021reachability}, which would require finite-horizon error estimates
and consistent lifting of initial sets that the present Lyapunov results alone do not provide.
Barrier certificates and feedback synthesis offer further settings in which to study how
extension choices affect safety and controlled invariance. The off-manifold extension of an
exact lift should therefore be chosen together with the certificate used for the original
system.

\section*{Acknowledgments}
We thank Weihan Zhang for contributions during the early stages of this work.
We also thank Dr.~Runyu Zhang, Prof.~Gleb Pogudin, and Prof.~Georgina Hall for helpful discussions.

\section*{Declarations}
The authors acknowledge the use of AI-assisted tools for improving grammar, clarity, and overall writing quality.

\bibliographystyle{siamplain}
\bibliography{references}

\clearpage
\appendix
\startcontents[appendices]
\phantomsection
\pdfbookmark[1]{Appendix contents}{odq-appendix-contents}
\noindent\textbf{Appendix contents}\par\smallskip
\printcontents[appendices]{app}{1}[2]{}
\par\medskip

\section{Proof of the ellipsoidal Lyapunov certificate}
\label{app:ellipsoidal-certificate}

\begin{proof}[Proof of Proposition~\ref{prop:ellipsoidal-certificate}]
\phantomsection\label{proof:ellipsoidal-certificate}
For any symmetric positive definite $P,Q$, the quotient in \eqref{eq:intrinsic-growth} is
unchanged under positive rescaling of $\mathbf w$. It can therefore be restricted to the
compact ellipsoid $\{\mathbf u:\mathbf u^{\!\top}P\mathbf u=1\}$. Its denominator is positive
there, so the supremum is finite and attained. Since $\mathbf q$ is homogeneous quadratic,
$\mathbf q(-\mathbf u)=\mathbf q(\mathbf u)$, whereas
$\mathbf u^{\!\top}P\mathbf q(\mathbf u)$ changes sign. Consequently,
\begin{equation}\label{eq:app-beta-unit}
\beta(P,Q,\mathbf q)
=\max_{\mathbf u^{\!\top}P\mathbf u=1}
\frac{2|\mathbf u^{\!\top}P\mathbf q(\mathbf u)|}{\mathbf u^{\!\top}Q\mathbf u}
=\max_{\mathbf u^{\!\top}P\mathbf u=1}
\frac{2\mathbf u^{\!\top}P\mathbf q(\mathbf u)}{\mathbf u^{\!\top}Q\mathbf u}.
\end{equation}

For the exact Lyapunov solution, write each nonzero $\mathbf w=t\mathbf u$, where
$t=\sqrt{V(\mathbf w)}>0$ and $\mathbf u^{\!\top}P\mathbf u=1$. Homogeneity gives
\begin{equation}\label{eq:app-radial-decay}
\dot V(t\mathbf u)
=t^2\mathbf u^{\!\top}Q\mathbf u
\left[-1+t\frac{2\mathbf u^{\!\top}P\mathbf q(\mathbf u)}
{\mathbf u^{\!\top}Q\mathbf u}\right].
\end{equation}
If $\beta>0$, this is negative for $0<t<1/\beta$. At a maximizing direction in
\eqref{eq:app-beta-unit}, it equals zero at $t=1/\beta$; hence no larger radius has strict
decrease throughout its punctured interior. Taking reciprocals over the positive directions
in \eqref{eq:app-beta-unit} gives \eqref{eq:rhorad} and $\rho_{\mathrm{rad}}=1/\beta$.
If $\beta=0$, the cubic contribution vanishes identically and
$\dot V=-\mathbf w^{\!\top}Q\mathbf w<0$ for every nonzero $\mathbf w$, so
$\rho_{\mathrm{rad}}=+\infty$.

For any representation $\mathbf q(\mathbf w)=H(\mathbf w\kron\mathbf w)$,
Cauchy--Schwarz in the $P$ inner product and
$\norm{\mathbf w\kron\mathbf w}=\norm{\mathbf w}^2$ yield
\[
2|\mathbf w^{\!\top}P\mathbf q(\mathbf w)|
\le2\sqrt{V(\mathbf w)}\sqrt{\norm{P}}\,\norm{H}\,\norm{\mathbf w}^2.
\]
Using $\mathbf w^{\!\top}Q\mathbf w\ge\lambda_{\min}(Q)\norm{\mathbf w}^2$ gives
\begin{equation}\label{eq:app-simple-growth-bound}
\beta(P,Q,\mathbf q)
\le\frac{2\sqrt{\norm{P}}\,\norm{H}}{\lambda_{\min}(Q)}.
\end{equation}
Taking reciprocals proves \eqref{eq:rhoan}, including $H=0$ under the stated convention.

Now let $\widehat P$ be any symmetric positive definite matrix. The symmetric normalized
residual satisfies
$-\delta I_N\preceq Q^{-1/2}RQ^{-1/2}\preceq\delta I_N$.
Thus $\mathbf w^{\!\top}R\mathbf w\le\delta\mathbf w^{\!\top}Q\mathbf w$, and
\begin{align*}
\dot{\widehat V}(\mathbf w)
&=\mathbf w^{\!\top}(-Q+R)\mathbf w
  +2\mathbf w^{\!\top}\widehat P\mathbf q(\mathbf w)\\
&\le-(1-\delta)\mathbf w^{\!\top}Q\mathbf w
  +\gamma\sqrt{\widehat V(\mathbf w)}\,\mathbf w^{\!\top}Q\mathbf w,
\end{align*}
which proves \eqref{eq:metric-residual-decay}. On \eqref{eq:metric-safe-ellipsoid},
$\gamma\sqrt{\widehat V}\le\theta(1-\delta)$. With
$\kappa:=\lambda_{\min}(\widehat P^{-1/2}Q\widehat P^{-1/2})>0$, it follows that
\begin{equation}\label{eq:app-safe-exponential}
\dot{\widehat V}
\le-(1-\theta)(1-\delta)\mathbf w^{\!\top}Q\mathbf w
\le-(1-\theta)(1-\delta)\kappa\widehat V.
\end{equation}
The derivative is strictly negative on the boundary, so a trajectory cannot leave the
ellipsoid. The polynomial vector field is locally Lipschitz, and the invariant ellipsoid is
compact; hence every trajectory starting there exists for all $t\ge0$. Integration of
\eqref{eq:app-safe-exponential} gives
$\widehat V(\mathbf w(t))\le\widehat V(\mathbf w(0))
e^{-(1-\theta)(1-\delta)\kappa t}$ and convergence to the origin.
This also proves the exact-solution closed-ellipsoid claims: when $\beta>0$, take
$\widehat P=P$, $\delta=0$, $\gamma=\beta$, and $\theta=\rho\beta<1$.
If $\gamma=0$, the bound
$\dot{\widehat V}\le-(1-\delta)\kappa\widehat V$ holds globally. Every finite sublevel set
is compact and invariant, so the same argument gives global existence and exponential
convergence. This covers the exact case $\beta=0$ as well. Replacing $\delta$ and $\gamma$
by upper bounds with the residual bound below one preserves all the inequalities.
\end{proof}

\section{Problem formulation: overview and proofs}
\label{app:problem-structure}

\begin{figure}[!htbp]
\centering
\begingroup
\begingroup
\definecolor{odqFlowInk}{HTML}{243247}
\definecolor{odqFlowSlate}{HTML}{617287}
\definecolor{odqFlowLine}{HTML}{758397}
\definecolor{odqFlowBlue}{HTML}{2D6196}
\definecolor{odqFlowTeal}{HTML}{24786F}
\definecolor{odqFlowViolet}{HTML}{68528F}
\definecolor{odqFlowNeutralFill}{HTML}{F2F5F8}
\definecolor{odqFlowBlueFill}{HTML}{E9F1FB}
\definecolor{odqFlowTealFill}{HTML}{E8F4F0}
\definecolor{odqFlowVioletFill}{HTML}{F0ECF8}
\begin{tikzpicture}[
  font=\small,
  stage/.style={
    draw=odqFlowSlate!55, text=odqFlowInk, line width=.65pt,
    rounded corners=3pt, align=center, inner xsep=7pt, inner ysep=7pt,
    fill=odqFlowNeutralFill, outer sep=0pt
  },
  wide/.style={stage,text width=.90\linewidth},
  half/.style={stage,text width=.405\linewidth},
  gain/.style={draw=odqFlowBlue!75,fill=odqFlowBlueFill},
  gauge/.style={draw=odqFlowTeal!75,fill=odqFlowTealFill},
  certificate/.style={draw=odqFlowViolet!75,fill=odqFlowVioletFill},
  choice/.style={line width=.9pt},
  flow/.style={-{Latex[length=2mm,width=1.3mm]},
    draw=odqFlowLine,line width=.7pt,rounded corners=2pt},
  stem/.style={draw=odqFlowLine,line width=.7pt},
  linklabel/.style={font=\footnotesize,text=odqFlowSlate,
    fill=white,inner xsep=4pt,inner ysep=2pt}
]
\node[wide] (data) {
  {\sffamily\bfseries\color{odqFlowSlate}Fixed data}\\[3pt]
  Original system $\mathbf p$; embedding $\Phi$ and manifold $\M$; dimension $N$\\
  Reference $\mathbf F_0$; stabilizers $\{h_i\}$; weight $Q=I_N$; Hurwitz gain box $B$
};

\node[wide,fill=white,below=6mm of data] (space) {
  {\sffamily\bfseries\color{odqFlowSlate}All admissible quadratic extensions}\\[3pt]
  $\mathcal E_\Phi(\mathbf p)=\mathbf F_0+\mathcal V_\Phi,
  \qquad \mathcal V_\Phi=I(\M)^N\cap\mathcal P_2^N$
};

\node[wide,gain,choice,below=9mm of space] (extension) {
  {\sffamily\bfseries\color{odqFlowBlue}Extension variable: $\Lambda\in B$}\\[3pt]
  $\displaystyle
  \mathbf F_\Lambda=\mathbf F_0-\sum_{i=1}^m\lambda_i\mathbf e_{n+i}h_i
  =A(\Lambda)\mathbf w+\mathbf q_\Lambda(\mathbf w)$\\[2pt]
  Original dynamics on $\M$ are preserved
};

\node[half,minimum height=27mm,anchor=north]
  (metric) at ([xshift=-.2475\linewidth,yshift=-7mm]extension.south) {
  {\sffamily\bfseries\color{odqFlowSlate}Metric determined by $A(\Lambda)$}\\[4pt]
  $A^\top P+PA=-I_N,\quad P=P(\Lambda)\succ0$\\[3pt]
  Independent of the gauge $\mu$
};

\node[half,gauge,choice,minimum height=27mm,anchor=north]
  (gauge) at ([xshift=.2475\linewidth,yshift=-7mm]extension.south) {
  {\sffamily\bfseries\color{odqFlowTeal}Representation variable: $\mu\in\R^d$}\\[4pt]
  $H(\Lambda,\mu)=H(\Lambda)+K(\mu)$\\[2pt]
  $K(\mu)(\mathbf w\kron\mathbf w)\equiv0$\\[2pt]
  The quadratic map $\mathbf q_\Lambda$ is unchanged
};

\node[wide,certificate,minimum height=23mm,anchor=north] (bound)
  at ($(metric.south)!.5!(gauge.south)+(0,-.7)$) {
  {\sffamily\bfseries\color{odqFlowViolet}Spectral-norm Lyapunov bound}\\[4pt]
  $\gamma(\Lambda,\mu):=\norm{J(\Lambda,\mu)}
  \ge\beta\bigl(P(\Lambda),I_N,\mathbf q_\Lambda\bigr)$\\[3pt]
  $0<\rho<1/\gamma(\Lambda,\mu)$ certifies
  $\mathcal D(P(\Lambda),\rho)$ and its pullback under $\Phi$
};

\node[half,gauge,minimum height=23mm,anchor=north]
  (inner) at ([xshift=-.2475\linewidth,yshift=-7mm]bound.south) {
  {\sffamily\bfseries\color{odqFlowTeal}Inner problem: fix $\Lambda$}\\[4pt]
  $\displaystyle\phi(\Lambda)=\min_{\mu\in\R^d}\gamma(\Lambda,\mu)$\\[3pt]
  Convex semidefinite program
};

\node[half,gain,minimum height=23mm,anchor=north]
  (outer) at ([xshift=.2475\linewidth,yshift=-7mm]bound.south) {
  {\sffamily\bfseries\color{odqFlowBlue}Outer problem: choose $\Lambda$}\\[4pt]
  $\displaystyle\phi_B=\min_{\Lambda\in B}\phi(\Lambda),
  \qquad \rho_B=\phi_B^{-1}$\\[3pt]
  Generally nonconvex
};

\draw[flow] (data) -- (space);
\draw[flow] (space) -- node[linklabel]
  {restrict to $\mathcal S_h=\operatorname{span}\{\mathbf e_{n+i}h_i\}\subseteq\mathcal V_\Phi$}
  (extension);
\coordinate (fork) at ([yshift=-3.5mm]extension.south);
\draw[stem] (extension.south) -- (fork);
\draw[flow] (fork) -| (metric.north);
\draw[flow] (fork) -| (gauge.north);
\draw[flow] (metric.south) -- ([xshift=-.2475\linewidth]bound.north);
\draw[flow] (gauge.south) -- ([xshift=.2475\linewidth]bound.north);
\draw[flow] ([xshift=-.2475\linewidth]bound.south) -- (inner.north);
\draw[flow] (inner.east) -- (outer.west);
\end{tikzpicture}
\endgroup
\caption{Construction of Problem~\ref{prob:main}.
The gains select the extension; the gauge changes only its matrix representation.
The diagram states the optimization objective; certificates from numerical candidates
require the verification in Corollary~\ref{cor:resid}.}
\label{fig:problem-flow}
\endgroup
\end{figure}

\subsection{Extension space}
\label{app:extension-gauge}

\phantomsection\label{proof:extension-space}
For the fixed polynomial embedding, substitution defines the linear map
\[
T_\Phi:\mathcal P_2\longrightarrow\R[\mathbf x],
\qquad T_\Phi r=r\circ\Phi,
\qquad \ker T_\Phi=I(\M)\cap\mathcal P_2.
\]
Two degree-at-most-two vector fields agree on $\M$ if and only if their difference belongs
componentwise to $\ker T_\Phi$. Since $\mathbf F_0$ is exact, this gives
$\mathcal E_\Phi(\mathbf p)=\mathbf F_0+(\ker T_\Phi)^N$.
To compute the kernel, write
\[
r(\mathbf w)=c_0+\sum_{i=1}^N c_iw_i+
\sum_{1\le i\le j\le N}c_{ij}w_iw_j.
\]
Substitution and coefficient matching in $\mathbf x$ give a finite homogeneous linear system
in the $1+N+N(N+1)/2$ coefficients. Its null space gives a basis
$r_1,\ldots,r_s$ of $\ker T_\Phi$, and
$\{\mathbf e_k r_\ell:1\le k\le N,\ 1\le\ell\le s\}$ is a basis of
$\mathcal V_\Phi$; in particular, $\dim\mathcal V_\Phi=Ns$.

Each $h_i$ belongs to $\ker T_\Phi$, so every direction
$\mathbf e_{n+i}h_i$ in \eqref{eq:stabilizer-subspace} belongs to $\mathcal V_\Phi$.
Its linear part is $\mathbf e_{n+i}y_i$. These linear parts are linearly independent,
which proves $\dim\mathcal S_h=m$. Hence $\Lambda\mapsto\mathbf F_\Lambda$ parametrizes
$\mathbf F_0+\mathcal S_h$ injectively. The restriction can exclude both auxiliary and
physical equation directions. For example, if $m>0$, then
$\mathbf e_1h_1\in\mathcal V_\Phi\setminus\mathcal S_h$ because $h_1$ is nonzero and all
fields in $\mathcal S_h$ have zero physical components. Thus the inclusion is strict whenever
$m>0$. Exactness also gives $\mathbf F(\mathbf 0)=J_x\Phi(\mathbf 0)\mathbf p(\mathbf 0)=0$
for every admissible extension, so its constant term vanishes.

\subsection{Transverse feasibility}
\label{app:transverse-feasibility}

\begin{proof}[Proof of Lemma~\ref{lem:split}]
\phantomsection\label{proof:split}
Let $\Phi(\mathbf x)=(\mathbf x,\mathbf g(\mathbf x))$ and let $\mathbf F_\Lambda$ denote the
right-hand side of \eqref{eq:gainsys}. Exactness of the lift gives
\[
\mathbf F_\Lambda(\Phi(\mathbf x))=J_x(\Phi)(\mathbf x)\,\mathbf p(\mathbf x).
\]
Because $\mathbf p(\mathbf 0)=\mathbf 0$, $\mathbf g(\mathbf 0)=\mathbf 0$, and
$J_x(\mathbf g)|_{\mathbf 0}=0$, differentiation at the origin yields
\[
\Aof
\begin{bmatrix}I_n\\0\end{bmatrix}
=
\begin{bmatrix}I_n\\0\end{bmatrix}J_p;
\]
the quadratic part of $\mathbf F_\Lambda$ has zero derivative there. Hence the first $n$ columns
of $\Aof$ are $[J_p^{\!\top},0]^{\!\top}$. Moreover, both factors $a_i$ and $b_i$ vanish at the
origin, so
\[
J_y(\mathbf h)|_{(\mathbf 0,\mathbf 0)}=I_m,
\qquad \mathbf h:=(h_1,\dots,h_m).
\]
The remaining blocks are therefore the gain-independent matrix $R$ and
$J_y(\mathbf q_2-\diag(\Lambda)\mathbf h)|_{(\mathbf 0,\mathbf 0)}
=W-\diag(\Lambda)$, proving \eqref{eq:block}. Its characteristic polynomial factors over the two
diagonal blocks, which gives \eqref{eq:split}. The Hurwitz equivalence and \eqref{eq:margin} follow
by taking spectral abscissae and using $\alpha_p>0$.
\end{proof}

\begin{proof}[Proof of Corollary~\ref{cor:sat}]
\leavevmode\phantomsection\label{proof:sat}
By \eqref{eq:block}, adding $c\mathbf 1$ to the gains subtracts $cI_m$ from the transverse
matrix and shifts every transverse eigenvalue by $-c$. Taking negative spectral abscissae
gives \eqref{eq:gain-shift}, and setting $\Lambda=\mathbf 0$ gives
\eqref{eq:uniform-feasibility}. The full-margin claims follow from
$\alpha(\Lambda)=\min\{\alpha_p,\alpha_{\mathrm{aux}}(\Lambda)\}$ in Lemma~\ref{lem:split}.
Finally, \eqref{eq:AH} gives the affine formula for $H$, and continuity of the spectral norm
gives its asymptotic behavior.
\end{proof}

\subsection{Representation space}
\label{app:representation-space}

\phantomsection\label{proof:representation-space}
For the representation freedom, reshape the $r$-th row of
$K\in\R^{N\times N^2}$ as $K_r\in\R^{N\times N}$. Its contribution to component $r$ is
$\mathbf w^{\!\top}K_r\mathbf w$, which vanishes identically if and only if $K_r$ is
skew-symmetric: evaluation at $\mathbf e_i$ gives $(K_r)_{ii}=0$, and evaluation at
$\mathbf e_i+\mathbf e_j$ gives $(K_r)_{ij}+(K_r)_{ji}=0$.
There are $N(N-1)/2$ independent entries per component, so
$\dim\mathcal K=N^2(N-1)/2$. Finally, two matrices $H_1,H_2$ represent the same homogeneous
quadratic map if and only if $(H_1-H_2)(\mathbf w\kron\mathbf w)\equiv0$.
This proves that \eqref{eq:Hmu} parametrizes all representations of the fixed quadratic map.

\subsection{Intrinsic growth and spectral-norm relaxation}
\label{app:intrinsic-bound}

\begin{proof}[Proof of Proposition~\ref{prop:intrinsic-bound}]
\phantomsection\label{proof:intrinsic-bound}
Factor $P=P_f^{\!\top}P_f$ and $Q=Q_f^{\!\top}Q_f$. For any exact representation, write
$\mathbf q(\mathbf w)=\sum_i w_iM_i\mathbf w$ and
$G=\operatorname{col}_i(M_i^{\!\top}P+PM_i)$. Then
\begin{align*}
2\mathbf w^{\!\top}P\mathbf q(\mathbf w)
&=(\mathbf w\kron\mathbf w)^{\!\top}G\mathbf w\\
&=(P_f\mathbf w\kron Q_f\mathbf w)^{\!\top}
  J_Q(Q_f\mathbf w),
\qquad
J_Q=(P_f^{\!\top}\kron Q_f^{\!\top})^{-1}GQ_f^{-1}.
\end{align*}
Taking absolute values and applying the spectral-norm inequality gives
\[
2|\mathbf w^{\!\top}P\mathbf q(\mathbf w)|
\le\norm{J_Q}\,\norm{P_f\mathbf w}\,\norm{Q_f\mathbf w}^2
=\norm{J_Q}\sqrt{V(\mathbf w)}\,\mathbf w^{\!\top}Q\mathbf w.
\]
Division by the positive denominator for $\mathbf w\ne0$ proves $\beta\le\norm{J_Q}$.
The gauge identity leaves $\mathbf q$, and hence $\beta$, unchanged. Since $J_Q$ is affine
in $\mu$, its image is a closed affine subspace of a finite-dimensional matrix space.
Intersecting this image with the closed norm ball of radius $\norm{J_Q(H)}$ gives a nonempty
compact set on which the norm attains its minimum. This proves
\eqref{eq:intrinsic-spectral-bound} and attainment.
\end{proof}

\subsection{Pullback connectedness}
\label{app:pullback-box-proofs}

\begin{proof}[Proof of Corollary~\ref{cor:conn}]
\phantomsection\label{proof:conn}
Fix $\mathbf x_0\in S(\Lambda,\rho)$, set $P=P(\Lambda)$ and
$\mathbf w_0=\Phi(\mathbf x_0)$, and let $\mu^\star$ attain the minimum in
\eqref{eq:rhostarL}. Put $j_\star:=\norm{J(\Lambda,\mu^\star)}$ and
$\delta:=1-\rho j_\star>0$. For $\mathbf w\in\mathcal D(P,\rho)$,
\eqref{eq:vdotJ} gives $\dot V(\mathbf w)\le-\delta\norm{\mathbf w}^2\le-cV(\mathbf w)$, where
$c:=\delta/\lambda_{\max}(P)>0$. A first-exit argument makes $\mathcal D(P,\rho)$ positively
invariant; its compactness and the polynomiality of the lifted field imply that the solution
$\mathbf w(t)$ from $\mathbf w_0$ is global. Gronwall's inequality gives
$V(\mathbf w(t))\le e^{-ct}V(\mathbf w_0)$, hence $\mathbf w(t)\to\mathbf 0$.
Exactness, $\mathbf F_\Lambda(\Phi(\mathbf x))=J_x\Phi(\mathbf x)\mathbf p(\mathbf x)$, and
uniqueness yield $\mathbf w(t)=\Phi(\mathbf x(t))$; thus $\mathbf x(t)$ remains in
$S(\Lambda,\rho)$ and converges to $\mathbf 0$. Consequently,
$s\mapsto\mathbf x(s/(1-s))$ for $0\le s<1$, extended by $\mathbf 0$ at $s=1$, is a continuous
path in $S(\Lambda,\rho)$ from $\mathbf x_0$ to the origin. Every point admits such a path, so any
two points can be joined through the origin.
\end{proof}

\subsection{Derivation of the box-constrained formulation}
\label{app:problem-derivation}

\phantomsection\label{proof:problem-derivation}
Equation~\eqref{eq:rhostarL} already establishes the fixed-gain identity
$\rho^\star(\Lambda)=\phi(\Lambda)^{-1}$, and
Proposition~\ref{prop:box-wellposed} gives attainment on $B$. Hence, if $\phi_B>0$, order reversal
under inversion gives
\begin{equation}\label{eq:app-box-reciprocal}
\max_{\Lambda\in B}\rho^\star(\Lambda)
=\left(\min_{\Lambda\in B}\phi(\Lambda)\right)^{-1}
=\rho_B.
\end{equation}
If $\phi_B=0$, the same proposition supplies an optimizing pair with $J=0$, so both sides of
\eqref{eq:app-box-reciprocal} are $+\infty$ under the convention in \eqref{eq:problem}.

It remains to introduce the epigraph variable. For $J\in\R^{N^2\times N}$,
\begin{equation}\label{eq:app-spectral-epigraph}
\norm{J}\le t
\quad\Longleftrightarrow\quad
\begin{bmatrix}
tI_{N^2}&J\\
J^{\!\top}&tI_N
\end{bmatrix}\succeq0.
\end{equation}
Indeed, the block inequality forces $t\ge0$. For $t>0$, the Schur complement gives
\[
tI_N-t^{-1}J^{\!\top}J\succeq0
\quad\Longleftrightarrow\quad
J^{\!\top}J\preceq t^2I_N
\quad\Longleftrightarrow\quad
\norm{J}\le t.
\]
For $t=0$, its $2\times2$ principal minors show that the block inequality holds if and only if
$J=0$, so
\eqref{eq:app-spectral-epigraph} also covers the boundary case. Consequently,
\[
\phi_B
=\min_{\substack{\Lambda\in B,\ \mu\in\R^d,\ t\in\R}}
\left\{t:
\begin{bmatrix}
tI_{N^2}&J(\Lambda,\mu)\\
J(\Lambda,\mu)^{\!\top}&tI_N
\end{bmatrix}\succeq0\right\}.
\]
Substituting \eqref{eq:AH}, \eqref{eq:Hmu}, \eqref{eq:lyapL}, and \eqref{eq:J} gives precisely
\eqref{eq:problemfull}. As noted in Remark~\ref{rem:forms}, the Lyapunov equation uniquely fixes
$P=P(\Lambda)\succ0$; $P_f$ is its upper Cholesky factor with positive diagonal, not an independent
decision variable. Thus the substitution introduces no additional freedom and preserves the
optimal value $t_B=\phi_B$.

\subsection{Well-posedness of the box value}
\label{app:box-wellposed}

\begin{proof}[Proof of Proposition~\ref{prop:box-wellposed}]
\phantomsection\label{proof:box-wellposed}
Write a gauge direction as $S=(S_i)_{i=1}^N$, so $K(\mathbf w\kron\mathbf w)=0$ is equivalent to
$(S_i)_{rj}=-(S_j)_{ri}$. For $P=P^{\!\top}\succ0$, let
$T_P(S)_i:=S_i^{\!\top}P+PS_i$. The bijection $U_i=PS_i$ gives
$T_P(S)_i=U_i^{\!\top}+U_i$. Define
\[
\mathcal C:=\left\{(B_i)_{i=1}^N\in(\Sset^N)^N:
(B_i)_{jk}+(B_j)_{ki}+(B_k)_{ij}=0\ \text{for all }i,j,k\right\},
\]
which is independent of $P$. Gauge antisymmetry gives $\operatorname{im}T_P\subseteq\mathcal C$, while
$\ker T_P$ corresponds under $U_i=PS_i$ to $\wedge^3\R^N$. Thus
$\dim\operatorname{im}T_P=d-\binom N3=:q=N(N^2-1)/3$. The cyclic-sum map has rank
$\binom{N+2}3$ because its restriction to fully symmetric tensors is multiplication by $3$;
hence $\dim\mathcal C=q$ and $\operatorname{im}T_P=\mathcal C$.

Fixing a basis of $\mathcal C$ gives a nonredundant parametrization
$J(\Lambda,\nu)=J_0(\Lambda)+\mathcal L_\Lambda\nu$, $\nu\in\R^q$, with
$\mathcal L_\Lambda$ injective by \eqref{eq:J}. On $\mathcal F$, the Lyapunov solution and its
positive-diagonal Cholesky factor depend analytically on $\Lambda$; hence $J_0$ and
$\mathcal L_\Lambda$ are locally Lipschitz. The case $q=0$ is immediate. Otherwise, fix a closed
ball $K\Subset\mathcal F$ and set
$c_K:=\min_{\Lambda\in K,\,\norm{\nu}=1}\norm{\mathcal L_\Lambda\nu}>0$ and
$M_K:=\max_{\Lambda\in K}\norm{J_0(\Lambda)}$. The bound
$\norm{J(\Lambda,\nu)}\ge c_K\norm{\nu}-M_K$ makes every inner problem coercive, hence attained;
comparison with $\nu=0$ places every minimizer in the common ball
$\norm{\nu}\le R_K:=2M_K/c_K$. Uniform Lipschitz continuity of $J$ on this set now gives
\[
\left|\phi(\Lambda_1)-\phi(\Lambda_2)\right|
\le\sup_{\norm{\nu}\le R_K}
\norm{J(\Lambda_1,\nu)-J(\Lambda_2,\nu)}
\le L_K\norm{\Lambda_1-\Lambda_2}.
\]
Thus $\phi$ is locally Lipschitz and attains its minimum on the compact box $B$. If $\phi_B>0$,
strict monotonicity of $x\mapsto1/x$ gives the same optimizing gains; if $\phi_B=0$, attainment
gives an optimizing pair with $J=0$, and \eqref{eq:vdotJ} makes the corresponding quadratic
Lyapunov function global.
\end{proof}

\subsection{Gauge-defect correction}
\label{app:verified-certificate}

For a returned representation $\widetilde H$ that has a nonzero coefficient defect, choose
$\Delta H$ such that
$\mathbf q(\mathbf w)-\widetilde H(\mathbf w\kron\mathbf w)
=\Delta H(\mathbf w\kron\mathbf w)$ identically. Applying
\eqref{eq:app-simple-growth-bound} to the defect gives the valid bound
\begin{equation}\label{eq:app-gauge-defect-bound}
\beta(\widehat P,Q,\mathbf q)
\le\norm{J_Q(\widetilde H)}
+\frac{2\sqrt{\norm{\widehat P}}\,\norm{\Delta H}}{\lambda_{\min}(Q)},
\end{equation}
where $J_Q$ is assembled using $\widehat P$. Thus a nonzero gauge residual requires a
correction to the growth bound. If the gauge identity holds exactly, one may take
$\Delta H=0$. In numerical use, both terms in \eqref{eq:app-gauge-defect-bound} and the
normalized Lyapunov residual require upper bounds that include rounding errors.

\subsection{Leading-order certified volume}
\label{app:volume}

\begin{proof}[Proof of Proposition~\ref{prop:vol}]
\phantomsection\label{proof:vol}
Fix $\Lambda$ and write $P=P(\Lambda)$. With the blocks in \eqref{eq:Pblocks}, set
$q(\mathbf x):=\mathbf x^{\!\top}P_{xx}\mathbf x$ and
\[
r(\mathbf x):=\widetilde V_\Lambda(\mathbf x)-q(\mathbf x)
=2\mathbf x^{\!\top}P_{xy}\mathbf g(\mathbf x)
+\mathbf g(\mathbf x)^{\!\top}P_{yy}\mathbf g(\mathbf x).
\]
Since $\mathbf g(\mathbf x)=O(\norm{\mathbf x}^2)$, there is $c>0$ such that
$|r(\mathbf x)|\le c\norm{\mathbf x}^3$ near the origin. Let
$\lambda_x:=\lambda_{\min}(P_{xx})>0$. On
$\mathbb B_\delta:=\{\norm{\mathbf x}\le\delta\}$, for all sufficiently small $\delta$,
\begin{equation}\label{eq:app-volume-sandwich}
(1-\varepsilon_\delta)q(\mathbf x)
\le\widetilde V_\Lambda(\mathbf x)
\le(1+\varepsilon_\delta)q(\mathbf x),
\qquad
\varepsilon_\delta:=\frac{c\delta}{\lambda_x}<1.
\end{equation}
Let $E(s):=\{\mathbf x:q(\mathbf x)\le s^2\}$. Since $P\succ0$ and $\Phi$ contains $\mathbf x$,
$\widetilde V_\Lambda(\mathbf x)\ge\lambda_{\min}(P)\norm{\mathbf x}^2$; hence, for fixed
$\delta$ and all sufficiently small $\rho$, both $S(\Lambda,\rho)$ and the inner ellipsoid below
lie in $\mathbb B_\delta$. Equation~\eqref{eq:app-volume-sandwich} then gives
\[
E\!\left(\frac{\rho}{\sqrt{1+\varepsilon_\delta}}\right)
\subseteq S(\Lambda,\rho)
\subseteq
E\!\left(\frac{\rho}{\sqrt{1-\varepsilon_\delta}}\right).
\]
Because $\operatorname{vol}E(s)=\kappa_ns^n/\sqrt{\det P_{xx}}$, division by the proposed leading
term yields bounds $(1+\varepsilon_\delta)^{-n/2}$ and
$(1-\varepsilon_\delta)^{-n/2}$. Letting first $\rho\downarrow0$ and then $\delta\downarrow0$
squeezes the ratio to one, proving \eqref{eq:volexp}.
\end{proof}

\subsection{Gain invariance of the quadratic germ}
\label{app:quadratic-germ}

\begin{proof}[Proof of Proposition~\ref{prop:pxx}]
\phantomsection\label{proof:pxx}
By Lemma~\ref{lem:split},
$\Aof=\left[\begin{smallmatrix}J_p&R\\0&\Dof\end{smallmatrix}\right]$, and only $\Dof$ depends on
$\Lambda$. The $(1,1)$ block of the Lyapunov equation is
$J_p^{\!\top}P_{Q,xx}+P_{Q,xx}J_p=-Q_{xx}$. Since $J_p$ is Hurwitz, this equation has a unique
solution, independent of $\Lambda$.
\end{proof}

\clearpage
\section{Inner-oracle and outer-convergence details}
\label{app:outer-analysis}

\subsection{The gauge-image distance and its dual}
\label{app:inner-distance-dual}

\begin{proof}[Proof of Proposition~\ref{prop:inner-distance-dual}]
\phantomsection\label{proof:inner-distance-dual}
Fix $\Lambda$ and abbreviate $\mathcal T=\mathcal T_\Lambda$ and
$\mathcal W=\mathcal W_\Lambda$. The attainable matrices are exactly
$J_0+\mathcal W$. Since $\mathcal W=-\mathcal W$, minimizing their spectral norm gives
\eqref{eq:inner-distance}. The subspace $\mathcal W$ is closed and finite-dimensional.
The intersection of $J_0+\mathcal W$ with any nonempty bounded norm sublevel set is compact,
so a minimizing matrix $J^\star$ exists; any preimage of $J^\star-J_0$ under $\mathcal T$
is a minimizing gauge.

Introduce a matrix variable $M$ and impose $M=J_0+\mathcal T\mu$. The Lagrangian is
\[
L(M,\mu,Z)=\norm{M}+\langle Z,J_0+\mathcal T\mu-M\rangle_F.
\]
Its infimum over $\mu$ is finite exactly when
$\langle Z,\mathcal T\mu\rangle_F=0$ for all $\mu$, or $Z\in\mathcal W^\perp$.
Spectral--nuclear norm duality gives
$\langle Z,M\rangle_F\le\norm{Z}_*\norm{M}$, with
$\sup_{\norm{M}\le1}\langle Z,M\rangle_F=\norm{Z}_*$: equality follows by taking
$M=UV^{\!\top}$ in a compact singular value decomposition $Z=U\Sigma V^{\!\top}$.
Consequently,
\[
\inf_M\{\norm{M}-\langle Z,M\rangle_F\}
=\begin{cases}0,&\norm{Z}_*\le1,\\-\infty,&\norm{Z}_*>1.\end{cases}
\]
This yields the dual in \eqref{eq:inner-distance-dual}. Its feasible set is nonempty and
compact, so its maximum is attained.

For completeness, strong duality follows directly from convex optimality. The spectral norm
is continuous everywhere, so the convex Fermat rule on $J_0+\mathcal W$ gives
\[
0\in\partial\norm{\cdot}(J^\star)+\mathcal W^\perp.
\]
Hence some $Z^\star\in\partial\norm{\cdot}(J^\star)$ lies in $\mathcal W^\perp$.
The subgradient inequality for a norm is equivalent to
\begin{equation}\label{eq:app-norm-subgradient}
Z\in\partial\norm{\cdot}(J)
\quad\Longleftrightarrow\quad
\norm{Z}_*\le1,\qquad \langle Z,J\rangle_F=\norm{J}.
\end{equation}
Indeed, applying that inequality at $0$ and $2J$ forces equality at $J$, after which the
inequality for arbitrary matrices is precisely the dual-norm bound.
Thus $Z^\star$ is dual feasible and
\[
\langle Z^\star,J_0\rangle_F
=\langle Z^\star,J^\star\rangle_F=\norm{J^\star}=\phi(\Lambda).
\]
Conversely, such a $Z^\star$ bounds the norm of every matrix in $J_0+\mathcal W$ from below
by $\norm{J^\star}$, proving the stated optimality criterion.
Finally, weak duality gives
$\langle Z,J_0\rangle_F\le\phi(\Lambda)\le\norm{J(\Lambda,\mu)}$ for every primal--dual
feasible pair, which proves \eqref{eq:inner-duality-gap}.
\end{proof}

\subsection{The reduced oracle and its dual}
\label{app:reduced-inner}

\begin{proof}[Proof of Proposition~\ref{prop:reduced-inner}]
\phantomsection\label{proof:reduced-inner}
Let $X=P_f^{\!\top}\kron I_N$. Since $P=P_f^{\!\top}P_f$,
$P\kron I_N=XX^{\!\top}$. Congruence of $F_\Lambda(\nu,\tau)$ by
$\diag(X^{-1},I_N)$, or equivalently its Schur complement, gives
\begin{equation}\label{eq:app-inner-congruence}
F_\Lambda(\nu,\tau)\succeq0
\quad\Longleftrightarrow\quad
\begin{bmatrix}I_{N^2}&J_{\rm r}(\Lambda,\nu)\\
J_{\rm r}(\Lambda,\nu)^{\!\top}&\tau I_N\end{bmatrix}\succeq0
\quad\Longleftrightarrow\quad
J_{\rm r}^{\!\top}J_{\rm r}\preceq\tau I_N.
\end{equation}
Minimizing $\tau$ therefore gives
$\tau^\star=\min_\nu\norm{J_{\rm r}(\Lambda,\nu)}^2=\phi(\Lambda)^2$.
Strict feasibility follows by fixing any $\nu$ and taking
$\tau>\norm{J_{\rm r}(\Lambda,\nu)}^2$. For $q>0$, linear independence of the
$\mathcal B_\ell$ and invertibility of $X$ make
$\nu\mapsto X^{-1}\sum_\ell\nu_\ell\mathcal B_\ell$ injective, hence bounded below on the unit
sphere. The objective is therefore coercive in $\nu$; for $q=0$ attainment is immediate.

For the reconstruction, Proposition~\ref{prop:box-wellposed} proves that the image of the raw gauge
under $S\mapsto\operatorname{col}_r(S_r^{\!\top}P+PS_r)$ is exactly $\mathcal C$. Thus the
constraint in \eqref{eq:raw-gauge-reconstruction} is feasible. Its strictly convex quadratic
objective selects a unique representative, and substitution in \eqref{eq:J} adds precisely
$\sum_\ell\nu_\ell^\star\mathcal B_\ell$ to $G(\Lambda,0)$. Hence it realizes the same $J$, value,
and vector field.
\end{proof}

For later use, the dual of \eqref{eq:inner-generalized-schur} is
\begin{equation}\label{eq:inner-dual}
\begin{aligned}
\max_{Y\succeq0}\quad&
-\left\langle Y_{11},P\kron I_N\right\rangle_F
-2\left\langle Y_{12},G(\Lambda,0)\right\rangle_F\\
\mathrm{s.t.}\quad&
\operatorname{tr}Y_{22}=1,\qquad
\left\langle Y_{12},\mathcal B_\ell\right\rangle_F=0,\quad \ell=1,\dots,q,
\end{aligned}
\end{equation}
where $Y$ is partitioned conformally with $F_\Lambda$. This follows from the Lagrangian
$\tau-\langle Y,F_\Lambda\rangle_F$ by stationarity in $(\tau,\nu)$. Strict feasibility gives
strong duality; primal--dual optimality adds the complementarity condition in
\eqref{eq:inner-conic-kkt}.

\subsection{Proof of the sensitivity formula}
\label{app:inner-sensitivity}

The spectral subdifferential in \eqref{eq:spectral-subdifferential} and the affine chain rule give
\begin{equation}\label{eq:app-inner-fermat}
0\in\partial_\nu\norm{J_{\rm r}(\Lambda,\nu^\star)}
\quad\Longleftrightarrow\quad
\exists Z^\star\in\partial\norm{\cdot}(J^\star):
\left\langle Z^\star,J_\ell(\Lambda)\right\rangle_F=0
\quad(\ell=1,\dots,q),
\end{equation}
which proves \eqref{eq:inner-spectral-kkt} by the convex Fermat rule.

We next make the Cholesky derivative in \eqref{eq:lambda-sensitivities} explicit. Set
$C_i:=P_f^{-\top}\dot P_iP_f^{-1}$ and let $\operatorname{up}_{1/2}(C_i)$ retain the strict upper
triangle of $C_i$, take half of its diagonal, and set its strict lower triangle to zero. Then
\begin{equation}\label{eq:app-cholesky-differential}
\dot P_{f,i}=\operatorname{up}_{1/2}(C_i)P_f.
\end{equation}
Indeed, with $R_i:=\dot P_{f,i}P_f^{-1}$, differentiation of
$P=P_f^{\!\top}P_f$ gives $C_i=R_i^{\!\top}+R_i$; the upper-triangularity of $R_i$ determines it
uniquely. Differentiating the block definition of $G(\Lambda,0)$ gives the first line of
\eqref{eq:GJ-sensitivities}, and differentiating
$XJ_{\rm r}=G_{\rm r}$ gives its second line. The sign in the Lyapunov sensitivity follows from
$\dot A_i=-E_i$:
\[
\dot A_i^{\!\top}P+\Aof^{\!\top}\dot P_i+\dot P_i\Aof+P\dot A_i=0
\quad\Longleftrightarrow\quad
\Aof^{\!\top}\dot P_i+\dot P_i\Aof=E_iP+PE_i.
\]

\begin{proof}[Proof of Proposition~\ref{prop:regular-sensitivity}]
\phantomsection\label{proof:regular-sensitivity}
For $\Lambda\in\mathcal U$, define the Lyapunov operator on $\Sset^N$ by
\[
\mathcal A_\Lambda(S):=A(\Lambda)^{\!\top}S+SA(\Lambda).
\]
Since $A(\Lambda)$ is Hurwitz, this operator is invertible, and
\[
P_Q(\Lambda)=-\mathcal A_\Lambda^{-1}(Q)
=\int_0^\infty e^{A(\Lambda)^{\!\top}t}Qe^{A(\Lambda)t}\,dt\succ0.
\]
Indeed, the integral converges and solves the equation by integration of its time
derivative. If $\mathcal A_\Lambda(S)=0$, then
$e^{A^{\!\top}t}Se^{At}$ is constant in $t$ and tends to zero, so $S=0$; in finite
dimensions this proves invertibility. The coefficients of $\mathcal A_\Lambda$ are affine
in $\Lambda$, and matrix inversion is real analytic on the invertible matrices.
Thus $P_Q$ is real analytic on $\mathcal U$. Differentiating its defining equation, with
$Q$ fixed, gives
\[
A_i^{\!\top}P_Q+A^{\!\top}P_{Q,i}+P_{Q,i}A+P_QA_i=0,
\]
which proves \eqref{eq:lyapunov-general-sensitivity}; the derivative is unique by the same
invertibility argument. These are local assertions and require only that $A(\Lambda)$ be
Hurwitz at each point of $\mathcal U$.

The positive-diagonal Cholesky factor is real analytic on the positive definite cone,
as follows from its recursive formulas using positive square roots and division by
positive pivots. Hence $P_f$, $X^{-1}$, and $J_{\rm r}$ are real analytic for $Q=I_N$.
The fixed linearly independent matrices $\mathcal B_\ell$ make the reduced gauge map
injective throughout $\mathcal U$. For every closed ball $K\Subset\mathcal U$, the proof of
Proposition~\ref{prop:box-wellposed} therefore supplies constants $c_K>0$ and $M_K\ge0$ such that
\[
\norm{J_{\rm r}(\Lambda,\nu)}\ge c_K\norm{\nu}-M_K,
\qquad
\norm{\nu^\star(\Lambda)}\le 2M_K/c_K
\quad(\Lambda\in K)
\]
when $q>0$. Minimization is consequently restricted to a common compact ball in $\nu$,
on which $J_{\rm r}$ is uniformly Lipschitz in the gain. Comparing the two objectives at
each other's minimizers proves local Lipschitz continuity of $\phi$, as detailed in that
proof. When $q=0$, there is no inner variable and the conclusion follows directly from
the smoothness of $J_{\rm r}$ and the norm inequality.

The conic KKT conditions follow from \eqref{eq:inner-dual} and strong duality. Write
$\phi=\phi(\Lambda)>0$ and choose paired orthonormal bases $U_1,V_1$ of the leading singular
subspaces of $J^\star$. Solving the block equations for the kernel of
$F_\Lambda(\nu^\star,\phi^2)$ shows that it is spanned by the columns of
\[
T:=\begin{bmatrix}-\phi X^{-\top}U_1\\V_1\end{bmatrix}.
\]
Indeed, the first block equation determines the upper block as $-X^{-\top}J^\star v$,
and the second becomes $((J^\star)^{\!\top}J^\star-\phi^2I_N)v=0$.
For two positive semidefinite matrices, zero Frobenius inner product implies zero product.
Complementarity therefore places the range of $Y^\star$ in this kernel, so
\begin{equation}\label{eq:app-dual-factorization}
Y^\star=T\Theta T^{\!\top},\qquad \Theta\succeq0,\qquad
\operatorname{tr}\Theta=\operatorname{tr}Y_{22}^\star=1.
\end{equation}
It follows that
\[
-\phi^{-1}X^{\!\top}Y_{12}^\star=U_1\Theta V_1^{\!\top}=:Z^\star,
\qquad \norm{Z^\star}_*=\operatorname{tr}\Theta=1.
\]
Equation~\eqref{eq:spectral-subdifferential} gives
$Z^\star\in\partial\norm{\cdot}(J^\star)$, and conic stationarity gives
\[
\langle Z^\star,J_\ell(\Lambda)\rangle_F
=-\phi^{-1}\langle Y_{12}^\star,\mathcal B_\ell\rangle_F=0.
\]
Proposition~\ref{prop:inner-distance-dual} now proves that $Z^\star$ is a distance-dual optimum.

To relate the two sensitivity formulas, put $\dot X_i=\dot P_{f,i}^{\!\top}\kron I_N$.
The blocks in \eqref{eq:app-dual-factorization} are
\[
Y_{11}^\star=\phi^2X^{-\top}U_1\Theta U_1^{\!\top}X^{-1},
\qquad Y_{12}^\star=-\phi X^{-\top}Z^\star.
\]
Using $\dot P_i\kron I_N=\dot X_iX^{\!\top}+X\dot X_i^{\!\top}$ and
$J^\star V_1=\phi U_1$ gives
\[
\langle Y_{11}^\star,\dot P_i\kron I_N\rangle_F
=2\phi\langle Z^\star,X^{-1}\dot X_iJ^\star\rangle_F,
\qquad
\langle Y_{12}^\star,\dot G_i^0\rangle_F
=-\phi\langle Z^\star,X^{-1}\dot G_i^0\rangle_F.
\]
Together with \eqref{eq:GJ-sensitivities}, these identities prove
\begin{equation}\label{eq:app-dual-envelope}
2\phi\langle Z^\star,\dot J_i\rangle_F
=-\langle Y_{11}^\star,\dot P_i\kron I_N\rangle_F
-2\langle Y_{12}^\star,\dot G_i^0\rangle_F.
\end{equation}

It remains to establish the outer subgradient claim. The dual feasible set
\[
\mathcal Y:=\{Y\succeq0:\operatorname{tr}Y_{22}=1,\quad
\langle Y_{12},\mathcal B_\ell\rangle_F=0\ (\ell=1,\dots,q)\}
\]
is independent of the gain. Fix the optimum $Y^\star$ at $\Lambda$ and define
\[
d_{Y^\star}(\Lambda'):=
-\langle Y_{11}^\star,P(\Lambda')\kron I_N\rangle_F
-2\langle Y_{12}^\star,G(\Lambda',0)\rangle_F.
\]
Weak duality gives $d_{Y^\star}(\Lambda')\le\phi(\Lambda')^2$ for every
$\Lambda'\in\mathcal F$, with equality at $\Lambda$. Since $d_{Y^\star}$ is smooth and
$d_{Y^\star}(\Lambda)=\phi^2>0$, it is positive in a neighborhood of $\Lambda$.
There $h:=\sqrt{d_{Y^\star}}$ is a smooth lower bound for $\phi$, with
$h(\Lambda)=\phi(\Lambda)$ and $\partial_{\lambda_i}h(\Lambda)=g_i$ by
\eqref{eq:app-dual-envelope}. Taylor expansion gives
\[
\phi(\Lambda+\Delta)\ge\phi(\Lambda)+g^{\!\top}\Delta+o(\norm{\Delta}).
\]
Thus $g$ is a Fr\'echet subgradient and, by local Lipschitz continuity, belongs to
$\partial_C\phi(\Lambda)$. If $\phi$ is differentiable at $\Lambda$, the same inequality
applied in opposite directions gives $g=\nabla\phi(\Lambda)$.

Finally, suppose that $\nu^\star$ is unique and the leading singular value of $J^\star$ is
positive and simple. Set $f(\Lambda',\nu):=\norm{J_{\rm r}(\Lambda',\nu)}$.
Then $f$ is continuously differentiable near $(\Lambda,\nu^\star)$, and
$a:=\nabla_\Lambda f(\Lambda,\nu^\star)$ has components
$a_i=\langle uv^{\!\top},\dot J_i\rangle_F$.
For each sufficiently small $\Delta$, choose a minimizer $\nu_\Delta$ at $\Lambda+\Delta$.
Local uniform boundedness and continuity imply that every cluster point of $\nu_\Delta$
as $\Delta\to0$ minimizes $f(\Lambda,\cdot)$; uniqueness therefore gives
$\nu_\Delta\to\nu^\star$. Optimality at the two gains yields
\[
\begin{aligned}
f(\Lambda+\Delta,\nu_\Delta)-f(\Lambda,\nu_\Delta)
&\le\phi(\Lambda+\Delta)-\phi(\Lambda)\\
&\le f(\Lambda+\Delta,\nu^\star)-f(\Lambda,\nu^\star).
\end{aligned}
\]
Both bounds equal $a^{\!\top}\Delta+o(\norm{\Delta})$ by continuity of
$\nabla_\Lambda f$ and $\nu_\Delta\to\nu^\star$. Thus $\phi$ is differentiable and
\begin{equation}\label{eq:app-danskin}
\partial_{\lambda_i}\phi(\Lambda)
=\langle uv^{\!\top},\dot J_i\rangle_F=u^{\!\top}\dot J_i v,
\end{equation}
which proves \eqref{eq:regular-envelope}.
\end{proof}

At a nonsmooth point with $\phi(\Lambda)>0$, different exact optimal multipliers can select
different Clarke subgradients. An individual selection need not determine the
nearby-gradient hull in \eqref{eq:clarke-epsilon-hull}. The outer convergence theorem therefore
retains its assumption that the sampled hulls converge to the Clarke subdifferential
\citep{clarke1983optimization}.

Finally, the damping in \eqref{eq:damped-bfgs} gives
$s^{\!\top}\widetilde r\ge\kappa s^{\!\top}Ms>0$. Thus the BFGS update is positive definite whenever
$M\succ0$ and $s\ne0$; rebuilding from finitely many such pairs and clipping its eigenvalues gives
the metric safeguards assumed in Theorem~\ref{thm:outer-clarke}.

\subsection{Proof of conditional Clarke stationarity}
\label{app:outer-clarke}

Recall that
$\psi^\circ(x;d):=\limsup_{z\to x,\,t\downarrow0}
[\psi(z+td)-\psi(z)]/t$ denotes the Clarke directional derivative.

\begin{proof}[Proof of Theorem~\ref{thm:outer-clarke}]
\phantomsection\label{proof:outer-clarke}
The bounds in \eqref{eq:bundleqp} give $\theta^k+d_k\in C$. Convexity of $C$ then makes every
backtracked point $\theta^k+a d_k$, $a\in[0,1]$, feasible. A serious step satisfies
\eqref{eq:armijo-transition}, whereas a null step leaves the iterate unchanged; hence
$\psi(\theta^k)$ is nonincreasing. Compactness of $C$ gives accumulation points.

Fix a subsequence $\theta^{k_r}\to\bar\theta$ and suppose that
$0\notin\partial_C\psi(\bar\theta)+N_C(\bar\theta)$. We first show that the corresponding
residuals are bounded away from zero. Otherwise, after passing to a subsequence,
$\chi_{k_r}\to0$; the metric safeguards imply $d_{k_r}\to0$. By
\eqref{eq:gradient-consistency}, the aggregates $\bar g_{k_r}\in\widehat\partial_{k_r}$ in
Lemma~\ref{lem:bundle-model} are bounded, and a further subsequence converges to some
$\bar g\in\partial_C\psi(\bar\theta)$. Equation~\eqref{eq:bundle-kkt} makes
$n_{k_r}=-M_{k_r}d_{k_r}-\bar g_{k_r}$ bounded. Since the convex normal-cone mapping has a closed
graph and $n_{k_r}\in N_C(\theta^{k_r}+d_{k_r})$, its limit satisfies
$n\in N_C(\bar\theta)$ and $\bar g+n=0$, a contradiction. Consequently, for some $\delta>0$,
\begin{equation}\label{eq:app-residual-gap}
\chi_{k_r}^2=d_{k_r}^{\!\top}M_{k_r}d_{k_r}\ge\delta
\end{equation}
for all sufficiently large $r$.

The displacements lie in the compact set $C-C$, and the safeguarded metrics lie in a compact set.
Along any further subsequence of the chosen indices, take
$d_{k_r}\to\bar d\ne0$ and $M_{k_r}\to\bar M$. Hausdorff convergence in
\eqref{eq:gradient-consistency} implies uniform convergence of the support functions on bounded
sets. Lemma~\ref{lem:bundle-model} therefore yields
\begin{equation}\label{eq:uniform-clarke-descent}
\psi^\circ(\bar\theta;\bar d)
=\max_{g\in\partial_C\psi(\bar\theta)}g^{\!\top}\bar d
=\lim_{r\to\infty}\max_{g\in\widehat\partial_{k_r}}g^{\!\top}d_{k_r}
\le-\bar d^{\!\top}\bar M\bar d<0.
\end{equation}
Because $c<1$, joint upper semicontinuity of the Clarke directional derivative and continuity of
$d^{\!\top}Md$ imply that sufficiently small steps satisfy
\begin{equation}\label{eq:app-uniform-armijo}
\psi(\theta+a d)-\psi(\theta)\le-ca\,d^{\!\top}Md.
\end{equation}
The cluster set of $(d_{k_r},M_{k_r})$ is compact; a finite-cover argument makes the upper step
bound $a_0>0$ uniform for all sufficiently large $r$. Equation~\eqref{eq:app-residual-gap} and
$\omega_{k_r}\downarrow0$ exclude the low-residual branch. Since $L_{k_r}\to\infty$, the grid in
\eqref{eq:armijo-transition} eventually accepts a step satisfying
$\beta^{\ell_{k_r}}\ge\beta\min\{1,a_0\}$. Hence every sufficiently large $r$ decreases $\psi$ by
at least $c\beta\min\{1,a_0\}\delta>0$, contradicting nonnegativity and monotone convergence of
the values.
Thus every accumulation point satisfies \eqref{eq:box-clarke-stationarity}.

The affine change of variables gives
$\partial_C\psi(\theta)=D_B^{\!\top}\partial_C\phi(\Lambda)$ and
$N_C(\theta)=D_B^{\!\top}N_B(\Lambda)$, so stationarity is equivalent in the two coordinates. If
$\phi_B>0$, the gradient-limit characterization applied to the $C^1$ map $s\mapsto-1/s$ gives
$\partial_C(-1/\phi)=\phi^{-2}\partial_C\phi$ on $B$. Since $N_B$ is a cone, this is precisely the
Clarke condition for maximizing $\rho^\star=1/\phi$.
\end{proof}

\section{Benchmark definitions and supplementary results}
\label{app:benchmarks}

\subsection{Heterogeneous suite}

The suite was assembled to vary polynomial degree, state and lift dimension, and nonlinear
coupling, rather than sampled from a statistical population.  It contains standard models and
examples used in the stability literature (reversed Van der Pol, Duffing, a Chafee--Infante
semidiscretization, and systems drawn or adapted from
\citep{cai2024dissipative,liu2025physics,liu2024compositional}); scalar degree probes and synthetic
cascades, chains, and rings; and the already-quadratic controls \texttt{quad-cross},
\texttt{quad-2d}, and \texttt{liu-10d}.  The networked Van der Pol system is the deterministic local
variant motivated by the interconnected model in \citep{liu2024compositional}.
Table~\ref{tab:bench-systems} records the exact executed vector fields, dimensions, and
construction or literature source.

The retained geometry test, \texttt{shear-cubic-a1}, is obtained by pushing forward the base field
\begin{equation}\label{eq:app-shear-base}
 g(z)=\begin{bmatrix}-z_1-z_2+z_1\norm{z}^2\\ z_1-z_2+z_2\norm{z}^2\end{bmatrix}
\end{equation}
under $T_1(z)=(z_1,z_2+z_1^2)$.  Since
$\frac{d}{dt}\norm{z}^2=2\norm{z}^2(\norm{z}^2-1)$, the base field has the open unit disk as its
basin.  The identity
$\det\!\bigl(DT_1(z)\bigr)=1$ shows that $T_1$ preserves area.  Its basin is the sheared unit disk
\begin{equation}\label{eq:app-shear-basin}
T_1\bigl(\{z:\norm{z}<1\}\bigr)
=\bigl\{x:x_1^2+(x_2-x_1^2)^2<1\bigr\},
\end{equation}
which has area $\pi$ and a quartic boundary.  This gives a coordinate-shape benchmark with known
basin area.

\begingroup
\scriptsize
\setlength{\LTpre}{3pt}
\setlength{\LTpost}{3pt}
\setlength{\tabcolsep}{1.2pt}
\renewcommand{\arraystretch}{0.94}
\begin{longtable}{@{}lc>{\raggedright\arraybackslash}p{6.10cm}>{\raggedright\arraybackslash}p{2.65cm}@{}}
\caption{Executed vector fields in the heterogeneous suite.  The dimension tuple is $(n,\deg f,N,m)$.}
\label{tab:bench-systems}\\
\toprule
System & $(n,\deg f,N,m)$ & Dynamics & Provenance \\
\midrule
\endfirsthead
\toprule
System & $(n,\deg f,N,m)$ & Dynamics & Provenance \\
\midrule
\endhead
\bottomrule
\endfoot
\addlinespace[2pt]
\multicolumn{4}{@{}l}{\emph{Reference systems and scalar degree probes}} \\*[-1pt]
\addlinespace[2pt]
\nopagebreak
\texttt{vanderpol} & $(2,3,3,1)$ & \(\dot{x_{1}}=- x_{2}\);\allowbreak\ \(\dot{x_{2}}=x_{1} +\allowbreak x_{2} \left(x_{1}^{2} -\allowbreak 1\right)\) & reversed Van der Pol \citep[Ex.~6]{liu2025physics} \\
\texttt{planar-quintic} & $(2,5,4,2)$ & \(\dot{x_{1}}=- x_{1} +\allowbreak x_{2}\);\allowbreak\ \(\dot{x_{2}}=- x_{1}^{5} -\allowbreak 2 x_{2}\) & Section~\ref{sec:exp-pipeline} \\
\texttt{cubic} & $(1,3,2,1)$ & \(\dot{x_{1}}=x_{1}^{3} -\allowbreak x_{1}\) & \citep[Ex.~2]{cai2024dissipative} \\
\texttt{quintic} & $(1,5,3,2)$ & \(\dot{x_{1}}=x_{1}^{5} -\allowbreak x_{1}\) & scalar degree probe \\
\texttt{bistable} & $(1,3,2,1)$ & \(\dot{x_{1}}=- x_{1}^{3} +\allowbreak 0.4 x_{1}^{2} -\allowbreak 0.03 x_{1}\) & bistable reaction \citep[\S5.2]{cai2024dissipative} \\
\texttt{duffing} & $(2,3,3,1)$ & \(\dot{x_{1}}=x_{2}\);\allowbreak\ \(\dot{x_{2}}=x_{1}^{3} -\allowbreak x_{1} -\allowbreak x_{2}\) & Duffing oscillator \citep[\S5.1]{cai2024dissipative} \\
\hline
\addlinespace[2pt]
\multicolumn{4}{@{}l}{\emph{Quadratic controls ($m=0$)}} \\*[-1pt]
\addlinespace[2pt]
\nopagebreak
\texttt{quad-cross} & $(2,2,2,0)$ & \(\dot{x_{1}}=x_{1} x_{2} -\allowbreak x_{1}\);\allowbreak\ \(\dot{x_{2}}=x_{1}^{2} -\allowbreak 2 x_{2}\) & synthetic quadratic control \\
\texttt{quad-2d} & $(2,2,2,0)$ & \(\dot{x_{1}}=- x_{1} +\allowbreak x_{2}^{2}\);\allowbreak\ \(\dot{x_{2}}=x_{1} x_{2} -\allowbreak x_{2}\) & synthetic quadratic control \\
\hline
\addlinespace[2pt]
\multicolumn{4}{@{}l}{\emph{Synthetic degree and cascade probes}} \\*[-1pt]
\addlinespace[2pt]
\nopagebreak
\texttt{cubic-3d} & $(3,3,7,4)$ & \(\dot{x_{1}}=- x_{1} +\allowbreak x_{2}^{3}\);\allowbreak\ \(\dot{x_{2}}=- 2 x_{2} +\allowbreak x_{3}^{2}\);\allowbreak\ \(\dot{x_{3}}=x_{1}^{2} -\allowbreak 3 x_{3}\) & mixed-degree coupling probe \\
\texttt{cascade-3d} & $(3,3,6,3)$ & \(\dot{x_{1}}=- x_{1} +\allowbreak x_{2}^{3}\);\allowbreak\ \(\dot{x_{2}}=- x_{2} +\allowbreak x_{3}^{3}\);\allowbreak\ \(\dot{x_{3}}=- x_{3}\) & triangular cascade \\
\texttt{septic} & $(1,7,4,3)$ & \(\dot{x_{1}}=x_{1}^{7} -\allowbreak x_{1}\) & scalar degree probe \\
\texttt{septic-2d} & $(2,7,5,3)$ & \(\dot{x_{1}}=- x_{1} +\allowbreak x_{2}\);\allowbreak\ \(\dot{x_{2}}=- x_{1}^{7} -\allowbreak 2 x_{2}\) & degree-seven cascade \\
\addlinespace[2pt]
\multicolumn{4}{@{}l}{\emph{Semidiscrete, chain, and ring models}} \\*[-1pt]
\addlinespace[2pt]
\nopagebreak
\texttt{chafee-3} & $(3,3,6,3)$ & \(\dot{x_{1}}=x_{1}^{3} -\allowbreak 1.4 x_{1} +\allowbreak 0.2 x_{2}\);\allowbreak\ \(\dot{x_{2}}=0.2 x_{1} +\allowbreak x_{2}^{3} -\allowbreak 1.4 x_{2} +\allowbreak 0.2 x_{3}\);\allowbreak\ \(\dot{x_{3}}=0.2 x_{2} +\allowbreak x_{3}^{3} -\allowbreak 1.4 x_{3}\) & Chafee--Infante-type semidiscretization, \(u_t=\nu u_{xx}-u+u^3\) (three nodes) \\
\texttt{duffing-chain-2} & $(4,3,6,2)$ & \(\dot{q_{1}}=v_{1}\);\allowbreak\ \(\dot{v_{1}}=q_{1}^{3} -\allowbreak 2.0 q_{1} +\allowbreak 0.5 q_{2} -\allowbreak 1.0 v_{1}\);\allowbreak\ \(\dot{q_{2}}=v_{2}\);\allowbreak\ \(\dot{v_{2}}=0.5 q_{1} +\allowbreak q_{2}^{3} -\allowbreak 2.0 q_{2} -\allowbreak 1.0 v_{2}\) & Duffing-chain variant; cf.\ \citep[\S5.3]{cai2024dissipative} \\
\texttt{duffing-chain-3} & $(6,3,9,3)$ & \(\dot{q_{1}}=v_{1}\);\allowbreak\ \(\dot{v_{1}}=q_{1}^{3} -\allowbreak 2.0 q_{1} +\allowbreak 0.5 q_{2} -\allowbreak 1.0 v_{1}\);\allowbreak\ \(\dot{q_{2}}=v_{2}\);\allowbreak\ \(\dot{v_{2}}=0.5 q_{1} +\allowbreak q_{2}^{3} -\allowbreak 2.0 q_{2} +\allowbreak 0.5 q_{3} -\allowbreak 1.0 v_{2}\);\allowbreak\ \(\dot{q_{3}}=v_{3}\);\allowbreak\ \(\dot{v_{3}}=0.5 q_{2} +\allowbreak q_{3}^{3} -\allowbreak 2.0 q_{3} -\allowbreak 1.0 v_{3}\) & Duffing-chain variant; cf.\ \citep[\S5.3]{cai2024dissipative} \\
\texttt{cubic-ring-3} & $(3,3,9,6)$ & \(\dot{x_{1}}=- x_{1} +\allowbreak x_{2}^{3}\);\allowbreak\ \(\dot{x_{2}}=- 2 x_{2} +\allowbreak x_{3}^{3}\);\allowbreak\ \(\dot{x_{3}}=x_{1}^{3} -\allowbreak 3 x_{3}\) & directed cubic ring \\
\hline
\addlinespace[2pt]
\multicolumn{4}{@{}l}{\emph{Literature-derived network models}} \\*[-1pt]
\addlinespace[2pt]
\nopagebreak
\texttt{networked-vdp-2} & $(4,3,6,2)$ & \(\dot{x_{1}}=- x_{2}\);\allowbreak\ \(\dot{x_{2}}=- \frac{x_{1} x_{4}}{10} +\allowbreak x_{1} +\allowbreak x_{2} \left(\frac{4 x_{1}^{2}}{5} -\allowbreak \frac{4}{5}\right)\);\allowbreak\ \(\dot{x_{3}}=- x_{4}\);\allowbreak\ \(\dot{x_{4}}=- \frac{x_{2} x_{3}}{10} +\allowbreak x_{3} +\allowbreak x_{4} \left(\frac{11 x_{3}^{2}}{5} -\allowbreak \frac{11}{5}\right)\) & deterministic network variant \citep{liu2024compositional} \\
\texttt{liu-10d} & $(10,2,10,0)$ & \(\dot{x_{1}}=- x_{1} +\allowbreak 0.5 x_{2} -\allowbreak 0.1 x_{9}^{2}\);\allowbreak\ \(\dot{x_{2}}=- 0.5 x_{1} -\allowbreak x_{2}\);\allowbreak\ \(\dot{x_{3}}=- 0.1 x_{1}^{2} -\allowbreak x_{3} +\allowbreak 0.5 x_{4}\);\allowbreak\ \(\dot{x_{4}}=- 0.5 x_{3} -\allowbreak x_{4}\);\allowbreak\ \(\dot{x_{5}}=- x_{5} +\allowbreak 0.5 x_{6} +\allowbreak 0.1 x_{7}^{2}\);\allowbreak\ \(\dot{x_{6}}=- 0.5 x_{5} -\allowbreak x_{6}\);\allowbreak\ \(\dot{x_{7}}=- x_{7} +\allowbreak 0.5 x_{8}\);\allowbreak\ \(\dot{x_{8}}=- 0.5 x_{7} -\allowbreak x_{8}\);\allowbreak\ \(\dot{x_{9}}=0.5 x_{10} -\allowbreak x_{9}\);\allowbreak\ \(\dot{x_{10}}=- x_{10} +\allowbreak 0.1 x_{2}^{2} -\allowbreak 0.5 x_{9}\) & \citep[Ex.~8]{liu2025physics} \\
\hline
\addlinespace[2pt]
\multicolumn{4}{@{}l}{\emph{Prescribed-basin geometry test}} \\*[-1pt]
\addlinespace[2pt]
\nopagebreak
\texttt{shear-cubic-a1} & $(2,6,6,4)$ & \(\dot{x_{1}}=x_{1}^{5} -\allowbreak 2 x_{1}^{3} x_{2} +\allowbreak x_{1}^{3} +\allowbreak x_{1}^{2} +\allowbreak x_{1} x_{2}^{2} -\allowbreak x_{1} -\allowbreak x_{2}\);\allowbreak\ \(\dot{x_{2}}=x_{1}^{6} -\allowbreak x_{1}^{4} x_{2} +\allowbreak x_{1}^{4} +\allowbreak 2 x_{1}^{3} -\allowbreak x_{1}^{2} x_{2}^{2} +\allowbreak x_{1}^{2} x_{2} -\allowbreak x_{1}^{2} -\allowbreak 2 x_{1} x_{2} +\allowbreak x_{1} +\allowbreak x_{2}^{3} -\allowbreak x_{2}\) & area-preserving shear of \eqref{eq:app-shear-base} \\
\end{longtable}
\endgroup

\subsection{Single-link relay family}
\label{app:relay-benchmark}

For odd $p\ge3$ and $n\ge2$, the relay family is
\begin{equation}\label{eq:app-relay-family}
\begin{aligned}
 \dot x_i&=a_i(-x_i+x_{i+1}), && i=1,\ldots,n-1,\\
 \dot x_n&=a_n(-x_n+x_1^p), &
 a_i&=\frac{2+((i-1)\bmod3)}{2}.
\end{aligned}
\end{equation}
The executed grid is $(p,n)\in\{3,5,7\}\times\{2,4,6,8\}$.  Its origin Jacobian is upper
triangular with diagonal entries $-a_i$, and hence is Hurwitz.  Writing $q=(p-1)/2$, exact symbolic
reconstruction verifies the \textsc{DQbee} auxiliaries $x_1^2,\ldots,x_1^{q+1}$, so $m=q$ and
$N=n+q$ in every cell.  The two nonzero equilibria $\pm\mathbf 1$ motivate retaining the directions
$\pm\mathbf 1/\sqrt n$ in the common geometry set.

The grid, arm definitions, execution order, and retirement rule were frozen before this relay
cohort was run.  Each arm has three isolated repeats and an independent 600-second deadline; a
timeout or completed budget crossing retires only the same method at larger $n$ within the same
degree and repeat.  Geometry has a separate 120-second budget and uses 4096 seeded random
directions, the $2n$ signed coordinate directions, and the two equilibrium directions, with the
same radial scan and bisection resolution as above.

Table~\ref{tab:bench-quality}
gives the normalized comparison used in the main text, while Table~\ref{tab:bench-absolute} reports
the absolute score and construction-time medians for all six methods and both cohorts.

\begin{table}[!tp]
\centering
\setlength{\abovecaptionskip}{0pt}
\setlength{\belowcaptionskip}{4pt}
\caption{Absolute results corresponding to Table~\ref{tab:bench-quality}. Each complete cell gives the three-repeat median $(M/T)$ to two significant digits, with $T$ in seconds and $M\in\{L,A,V,R\}$. Bold and the $--^{R}$, $--^{A}_{k/3}$, and TO conventions follow Table~\ref{tab:bench-quality}.}\label{tab:bench-absolute}
\scriptsize
\setlength{\tabcolsep}{1pt}
\renewcommand{\arraystretch}{0.92}
\begin{tabular*}{\linewidth}{@{\extracolsep{\fill}}lccrrrrrr@{}}
\toprule
& & & \multicolumn{1}{c}{Proposed} & \multicolumn{2}{c}{Lifted baselines} & \multicolumn{3}{c}{Direct \gls{sos}} \\
\cmidrule(lr){4-4}\cmidrule(lr){5-6}\cmidrule(l){7-9}
System or $(p,n)$ & $n/N/m$ & $M$ & ODQ & Base & Gauge & SOS--2 & SOS--4(2) & SOS--4(8) \\
\midrule
\addlinespace[2pt]
\multicolumn{9}{@{}l}{\emph{Reference systems and scalar degree probes}} \\[-1pt]
\addlinespace[2pt]
\texttt{vanderpol} & $2/3/1$ & A & $(2.7/0.3)$ & $(0.9/0.11)$ & $(2/0.12)$ & $(6.5/0.55)$ & $(6.6/5.6)$ & $(6.6/15)$ \\
\texttt{planar-quintic} & $2/4/2$ & A & $\boldsymbol{(35/0.72)}$ & $(7.3/0.13)$ & $(16/0.14)$ & $(10/1.2)$ & $(14/15)$ & $(30/47)$ \\
\texttt{cubic} & $1/2/1$ & L & $(1.9/0.2)$ & $(1.3/0.12)$ & $(1.3/0.12)$ & $(2/0.24)$ & $(2/1.6)$ & $(2/4.1)$ \\
\texttt{quintic} & $1/3/2$ & L & $(1.3/0.74)$ & $(0.92/0.13)$ & $(0.94/0.14)$ & $(2/0.34)$ & $(2/2.8)$ & $(2/3)$ \\
\texttt{bistable} & $1/2/1$ & R & $(0.051/0.4)$ & $(0.00089/0.11)$ & $(0.0026/0.12)$ & $(0.1/0.27)$ & $(0.1/1.4)$ & $(0.1/1.7)$ \\
\texttt{duffing} & $2/3/1$ & A & $(3.2/0.19)$ & $(1.2/0.095)$ & $(2.1/0.1)$ & $(3.6/0.53)$ & $(4/5.9)$ & $(4/6.7)$ \\
\hline
\addlinespace[2pt]
\multicolumn{9}{@{}l}{\emph{Quadratic controls ($m=0$)}} \\[-1pt]
\addlinespace[2pt]
\texttt{quad-cross} & $2/2/0$ & A & $(8.9/0.0098)$ & $(5.9/0.0034)$ & $(8.9/0.0093)$ & $(11/0.48)$ & $(18/5.5)$ & $(16/14)$ \\
\texttt{quad-2d} & $2/2/0$ & A & $(3.9/0.0096)$ & $(3.9/0.0039)$ & $(3.9/0.0096)$ & $(5.3/0.47)$ & $(9.6/5.6)$ & $(8.1/10)$ \\
\hline
\addlinespace[2pt]
\multicolumn{9}{@{}l}{\emph{Synthetic degree and cascade probes}} \\[-1pt]
\addlinespace[2pt]
\texttt{cubic-3d} & $3/7/4$ & V & $(36/4.3)$ & $(8.5/0.15)$ & $(19/0.21)$ & $(26/0.9)$ & $(33/24)$ & $(29/79)$ \\
\texttt{cascade-3d} & $3/6/3$ & V & $(32/2.3)$ & $(9.6/0.13)$ & $(17/0.16)$ & $(22/0.9)$ & $(24/23)$ & $(30/74)$ \\
\texttt{septic} & $1/4/3$ & L & $(1.1/1.1)$ & $(0.76/0.13)$ & $(0.78/0.14)$ & $(2/0.42)$ & $(2/3.3)$ & $(2/3.7)$ \\
\texttt{septic-2d} & $2/5/3$ & A & $\boldsymbol{(18/1.6)}$ & $(5.7/0.15)$ & $(10/0.17)$ & $(7.5/2.9)$ & $(10/29)$ & $(16/96)$ \\
\hline
\addlinespace[2pt]
\multicolumn{9}{@{}l}{\emph{Semidiscrete, chain, and ring models}} \\[-1pt]
\addlinespace[2pt]
\texttt{chafee-3} & $3/6/3$ & V & $(6.4/1.8)$ & $(2.8/0.19)$ & $(2.9/0.22)$ & $(6.7/1.2)$ & $(5.9/23)$ & $(5.9/81)$ \\
\texttt{duffing-chain-2} & $4/6/2$ & R & $(0.98/1.1)$ & $(0.58/0.15)$ & $(0.71/0.19)$ & $(1/3.6)$ & $(1.2/79)$ & $(1.2/280)$ \\
\texttt{duffing-chain-3} & $6/9/3$ & R & $(0.95/6.4)$ & $(0.55/0.15)$ & $(0.7/0.28)$ & $(1/13)$ & TO & TO \\
\texttt{cubic-ring-3} & $3/9/6$ & V & $(33/11)$ & $(7.8/0.22)$ & $(19/0.35)$ & $(26/0.94)$ & $(47/22)$ & $--^{A}_{1/3}$ \\
\hline
\addlinespace[2pt]
\multicolumn{9}{@{}l}{\emph{Literature-derived network models}} \\[-1pt]
\addlinespace[2pt]
\texttt{networked-vdp-2} & $4/6/2$ & R & $(0.48/1.1)$ & $(0.3/0.13)$ & $(0.45/0.16)$ & $(0.84/1.9)$ & $(0.95/78)$ & $(0.95/120)$ \\
\texttt{liu-10d} & $10/10/0$ & R & $\boldsymbol{(19/0.29)}$ & $(14/0.074)$ & $(19/0.32)$ & $(10/45)$ & TO & TO \\
\hline
\addlinespace[2pt]
\multicolumn{9}{@{}l}{\emph{Prescribed-basin geometry test}} \\[-1pt]
\addlinespace[2pt]
\texttt{shear-cubic-a1} & $2/6/4$ & A & $(0.63/3.4)$ & $(0.072/0.4)$ & $(0.16/0.43)$ & $(1/3)$ & $(1.9/31)$ & $--^{A}_{2/3}$ \\
\hline
\addlinespace[2pt]
\multicolumn{9}{@{}l}{\emph{Single-link relay family \eqref{eq:app-relay-family}; instances are indexed by $(p,n)$}} \\[-1pt]
\addlinespace[2pt]
$(3,2)$ & $2/3/1$ & R & $(1.2/1.7)$ & $(0.87/1.7)$ & $(1.1/1.6)$ & $(1/1.8)$ & $--^{A}_{2/3}$ & $(1.2/17)$ \\
$(3,4)$ & $4/5/1$ & R & $\boldsymbol{(0.89/2.3)}$ & $(0.45/1.6)$ & $(0.88/1.8)$ & $(0.68/4.5)$ & $--^{A}_{0/3}$ & $--^{A}_{0/3}$ \\
$(3,6)$ & $6/7/1$ & R & $\boldsymbol{(0.7/3.5)}$ & $(0.35/1.5)$ & $(0.7/1.6)$ & $(0.56/13)$ & $--^{A}_{0/3}$ & $--^{A}_{0/3}$ \\
$(3,8)$ & $8/9/1$ & R & $\boldsymbol{(0.53/5.7)}$ & $(0.22/1.7)$ & $(0.53/1.6)$ & $(0.45/31)$ & TO & $--^{A}_{0/3}$ \\
$(5,2)$ & $2/4/2$ & R & $(1/1.9)$ & $(0.72/1.7)$ & $(0.94/1.8)$ & $(0.96/2.5)$ & $(1.2/14)$ & $--^{A}_{0/3}$ \\
$(5,4)$ & $4/6/2$ & R & $(0.8/2.8)$ & $(0.36/1.8)$ & $(0.77/1.9)$ & $(0.65/17)$ & $(1.3/310)$ & TO \\
$(5,6)$ & $6/8/2$ & R & $\boldsymbol{(0.62/2.9)}$ & $(0.28/1.8)$ & $(0.61/1.7)$ & $(0.55/150)$ & TO & $--^{R}$ \\
$(5,8)$ & $8/10/2$ & R & $(0.48/7.7)$ & $(0.18/1.6)$ & $(0.47/1.8)$ & TO & $--^{R}$ & $--^{R}$ \\
$(7,2)$ & $2/5/3$ & R & $(0.94/2.2)$ & $(0.6/1.6)$ & $(0.85/1.4)$ & $(0.93/3.6)$ & $(1.2/25)$ & $(1.2/57)$ \\
$(7,4)$ & $4/7/3$ & R & $\boldsymbol{(0.74/3.8)}$ & $(0.3/1.9)$ & $(0.71/1.6)$ & $(0.63/77)$ & TO & TO \\
$(7,6)$ & $6/9/3$ & R & $(0.57/5.6)$ & $(0.23/1.6)$ & $(0.56/1.8)$ & TO & $--^{R}$ & $--^{R}$ \\
$(7,8)$ & $8/11/3$ & R & $(0.45/13)$ & $(0.15/1.7)$ & $(0.44/2)$ & $--^{R}$ & $--^{R}$ & $--^{R}$ \\
\bottomrule
\end{tabular*}
\end{table}

\subsection{Supplementary geometry}
\label{app:numerical-supplement}

For the planar quintic, write $x=x_1$, $y=x_2$, let $z(x)=(x,x^2,x^3)^\top$, and partition the metric with
respect to $(z,y)$ as $\bigl[\begin{smallmatrix}C&b\\b^\top&a\end{smallmatrix}\bigr]$.
Since $a>0$, completing the square gives
\[
 V(x,y)=a\bigl(y+a^{-1}b^\top z(x)\bigr)^2+
 z(x)^\top(C-a^{-1}bb^\top)z(x).
\]
Writing $f(x)=\rho_{\rm safe}^2-z(x)^\top(C-a^{-1}bb^\top)z(x)$, the vertical slice has
width $2\sqrt{f(x)/a}$ wherever $f(x)>0$. We isolate all real roots of this degree-six
polynomial and integrate over every positive interval. This does not assume a star-shaped
set; all sampled quintic sets have one component. Symbolic checks verify the square
completion, including examples with disconnected sets and repeated roots. At the saved Gauge
and ODQ points, the area agrees with the original 8001-slice estimates to relative errors
$1.73\times10^{-6}$ and $1.05\times10^{-7}$. Root isolation interprets the input floats as
rational numbers; the subsequent quadrature and its error estimates remain floating-point.
The new grid comprises 145 fixed-gain SDP solves.

\subsection{Numerical diagnostics, failures, and timing}
\label{app:numerical-diagnostics}

\label{app:supp-numerical-diagnostics}
We read the source-locked raw artifacts for all $57$ heterogeneous and $36$ relay ODQ returns, including the nine heterogeneous $m=0$ controls. Table~\ref{tab:supp-historical-diagnostics} summarizes the diagnostics stored at those returned certificates. These are floating-point residual screens, not interval or rational proofs. The epigraph and dual residuals measure optimization accuracy; they do not replace the independent residual-aware certificate condition. Raw residuals have different scales across systems and dimensions. The additional normalized epigraph diagnostic uses $\max\{1,\|J\|_2\}$ only as a descriptive scale.

\begin{table}[tb]
\centering\scriptsize\setlength{\tabcolsep}{3pt}
\caption{Maximum stored numerical diagnostics at the historical ODQ returns. Columns include every returned certificate in the corresponding cohort; no new optimization was run to produce this table.}
\label{tab:supp-historical-diagnostics}
\begin{tabular}{lrr}
\toprule
Diagnostic & Heterogeneous ($57$) & Relay ($36$) \\
\midrule
Stored normalized Lyapunov residual bound & $8.02\!\times\!10^{-15}$ & $4.81\!\times\!10^{-16}$ \\
Certificate-map reconstruction residual ($\|\cdot\|_F$) & $8.15\!\times\!10^{-14}$ & $2.92\!\times\!10^{-14}$ \\
Epigraph discrepancy $|\sqrt{\tau}-\|J\|_2|$ & $2.45\!\times\!10^{-7}$ & $7.75\!\times\!10^{-9}$ \\
Epigraph discrepancy$/\max\{1,\|J\|_2\}$ & $9.87\!\times\!10^{-8}$ & $7.54\!\times\!10^{-9}$ \\
Gauge KKT residual $\max_k|\langle Z,J_k\rangle|$ & $4.45\!\times\!10^{-12}$ & $2.25\!\times\!10^{-11}$ \\
Generalized-Schur dual normalization residual & $1.99\!\times\!10^{-8}$ & $9.64\!\times\!10^{-9}$ \\
\bottomrule
\end{tabular}
\end{table}

The heterogeneous outer exits comprise $30$ finite-bundle tolerance exits, $18$ oracle-budget exits, and nine vacuous exits with no gain variables; all $36$ relay runs reach the finite-bundle tolerance. These are not exact Clarke-stationarity certificates. The stored outer residual is evaluated at a point different from the best returned certificate gain in $33/57$ heterogeneous and $9/36$ relay runs. Every returned ODQ certificate passes its separate numerical certificate screen, and no reported inner solve changes the requested \textsc{Mosek}/generalized-Schur backend. The historical files do not retain all oracle evaluations, so they do not support a count of every filtered inner candidate.

\begin{table}[tb]
\centering\scriptsize\setlength{\tabcolsep}{3pt}
\caption{Outcomes of all predeclared method--case--repeat cells: $57$ per heterogeneous method and $36$ per relay method. Indet. denotes a solver-indeterminate return; failed denotes another unsuccessful certificate return. Retired relay cells were not executed after an earlier resource-limit event and are counted separately from observed failures.}
\label{tab:supp-all-outcomes}
\begin{tabular}{llrrrrr}
\toprule
Cohort & Method & Success & Timeout & Indet. & Failed & Retired \\
\midrule
Heterogeneous & Base & 57 & 0 & 0 & 0 & 0 \\
Heterogeneous & Gauge & 57 & 0 & 0 & 0 & 0 \\
Heterogeneous & ODQ & 57 & 0 & 0 & 0 & 0 \\
Heterogeneous & SOS--2 & 57 & 0 & 0 & 0 & 0 \\
Heterogeneous & SOS--4(2) & 51 & 6 & 0 & 0 & 0 \\
Heterogeneous & SOS--4(8) & 48 & 6 & 0 & 3 & 0 \\
Relay & Base & 36 & 0 & 0 & 0 & 0 \\
Relay & Gauge & 36 & 0 & 0 & 0 & 0 \\
Relay & ODQ & 36 & 0 & 0 & 0 & 0 \\
Relay & SOS--2 & 27 & 6 & 0 & 0 & 3 \\
Relay & SOS--4(2) & 11 & 9 & 7 & 0 & 9 \\
Relay & SOS--4(8) & 6 & 9 & 8 & 0 & 13 \\
\bottomrule
\end{tabular}
\end{table}

The six heterogeneous timeouts of each quartic arm occur on \texttt{duffing-chain-3} and \texttt{liu-10d}, in all three repeats. The three additional SOS--4(8) failures comprise two returns on \texttt{cubic-ring-3} and one on \texttt{shear-cubic-a1}; one explicitly fails the prescribed radius-shrink screen and two return no valid certificate. A solver-indeterminate outcome in the relay cohort is not interpreted as mathematical infeasibility.

For each heterogeneous ODQ run, the saved construction time is the sum of lift construction, stable-box screening, the complete inner-oracle calls, gradient sensitivity, and remaining outer logic. The numerical bookkeeping agrees with this partition in all $57$ records. Lyapunov solves, matrix assembly, modeling-interface work, SDP solution, and certificate auditing are included within the inner-oracle component. In particular, \texttt{t\_inner\_sdp\_total} is not solver-only time. Saved Lyapunov and inner audit counters are nested diagnostics and are not added again. Table~\ref{tab:supp-historical-timing} reports arithmetic means over the three repeats so the partition remains additive before rounding.

\begin{table}[tb]
\centering\scriptsize\setlength{\tabcolsep}{3pt}
\caption{Mean heterogeneous ODQ construction-time partition over three repeats, in seconds. Lift: quadratization; box: stability screen; inner: complete inner oracle; grad.: gradient sensitivity; outer: remaining logic. Geometry, startup, warm-up, and serialization are excluded. Displayed components may differ from the displayed total by rounding.}
\label{tab:supp-historical-timing}
\begin{tabular}{lrrrrrr}
\toprule
System & Lift & Box & Inner & Grad. & Outer & Total \\
\midrule
\texttt{bistable} & $0.113$ & $0.0116$ & $0.229$ & $0.0127$ & $0.0439$ & $0.411$ \\
\texttt{cascade-3d} & $0.133$ & $7.29\!\times\!10^{-4}$ & $2.22$ & $0.0927$ & $0.0846$ & $2.53$ \\
\texttt{chafee-3} & $0.215$ & $7.47\!\times\!10^{-4}$ & $1.5$ & $0.0695$ & $0.0633$ & $1.85$ \\
\texttt{cubic} & $0.115$ & $3.14\!\times\!10^{-4}$ & $0.0475$ & $6.69\!\times\!10^{-3}$ & $0.0268$ & $0.196$ \\
\texttt{cubic-3d} & $0.165$ & $1.18\!\times\!10^{-3}$ & $4.16$ & $0.151$ & $0.0793$ & $4.56$ \\
\texttt{cubic-ring-3} & $0.245$ & $5.19\!\times\!10^{-3}$ & $10.2$ & $0.237$ & $0.0666$ & $10.8$ \\
\texttt{duffing} & $0.0955$ & $3.65\!\times\!10^{-4}$ & $0.0643$ & $7.3\!\times\!10^{-3}$ & $0.0244$ & $0.192$ \\
\texttt{duffing-chain-2} & $0.145$ & $5.07\!\times\!10^{-4}$ & $0.93$ & $0.039$ & $0.053$ & $1.17$ \\
\texttt{duffing-chain-3} & $0.178$ & $1.06\!\times\!10^{-3}$ & $6.52$ & $0.101$ & $0.0733$ & $6.87$ \\
\texttt{liu-10d} & $0.0806$ & $0$ & $0.211$ & $0$ & $1.5\!\times\!10^{-4}$ & $0.292$ \\
\texttt{networked-vdp-2} & $0.158$ & $6.85\!\times\!10^{-4}$ & $0.985$ & $0.0354$ & $0.0504$ & $1.23$ \\
\texttt{planar-quintic} & $0.126$ & $4.78\!\times\!10^{-4}$ & $0.523$ & $0.0285$ & $0.0483$ & $0.727$ \\
\texttt{quad-2d} & $4.93\!\times\!10^{-3}$ & $0$ & $6\!\times\!10^{-3}$ & $0$ & $1.07\!\times\!10^{-4}$ & $0.011$ \\
\texttt{quad-cross} & $3.24\!\times\!10^{-3}$ & $0$ & $6.81\!\times\!10^{-3}$ & $0$ & $1.23\!\times\!10^{-4}$ & $0.0102$ \\
\texttt{quintic} & $0.128$ & $4.13\!\times\!10^{-4}$ & $0.549$ & $0.0266$ & $0.051$ & $0.755$ \\
\texttt{septic} & $0.153$ & $6.07\!\times\!10^{-4}$ & $0.966$ & $0.0668$ & $0.0705$ & $1.26$ \\
\texttt{septic-2d} & $0.161$ & $6.16\!\times\!10^{-4}$ & $1.47$ & $0.0855$ & $0.0818$ & $1.8$ \\
\texttt{shear-cubic-a1} & $0.396$ & $0.0803$ & $2.71$ & $0.138$ & $0.0812$ & $3.4$ \\
\texttt{vanderpol} & $0.112$ & $3.87\!\times\!10^{-4}$ & $0.142$ & $0.0129$ & $0.0415$ & $0.308$ \\
\bottomrule
\end{tabular}
\end{table}

The relay artifacts instead retain launch-to-serialization wall time and separately timed geometry extraction, without an inner-stage partition. We do not infer a missing solver-only time by subtracting available counters, and we retain the two cohorts' different timing conventions.

To inspect matrix-level quantities absent from the historical artifacts, we perform one new fixed-gain inner SDP at each saved ODQ gain across all $19+12$ cases. The lifting map, stabilizer factorization, and $Q=I$ remain fixed, and no outer optimization is repeated. Table~\ref{tab:supp-posterior-diagnostics} concerns these newly computed matrices; it does not reconstruct the historical solver matrices or replace their reported certificates and timings.

\begin{table}[tb]
\centering\scriptsize\setlength{\tabcolsep}{3pt}
\caption{Fixed-gain audit of newly computed certificates, one per case. The first row gives the minimum positive eigenvalue; all other rows give maximum defects. These floating-point checks are distinct from the historical summaries above.}
\label{tab:supp-posterior-diagnostics}
\begin{tabular}{lrr}
\toprule
Diagnostic & Heterogeneous ($19$) & Relay ($12$) \\
\midrule
$\lambda_{\min}(P)$ & $0.0306$ & $0.0479$ \\
$\|A^TP+PA+I\|_2$ & $5.54\!\times\!10^{-15}$ & $3.26\!\times\!10^{-16}$ \\
Coefficientwise gauge identity defect & $4.44\!\times\!10^{-16}$ & $2.22\!\times\!10^{-16}$ \\
Reassembled $J$ discrepancy$/\max\{1,\|J\|_2\}$ & $2.15\!\times\!10^{-16}$ & $2.42\!\times\!10^{-16}$ \\
Model-LMI negative eigenvalue part (scaled) & $2.17\!\times\!10^{-9}$ & $2.07\!\times\!10^{-9}$ \\
Returned-$H$ LMI negative eigenvalue part (scaled) & $2.17\!\times\!10^{-9}$ & $2.07\!\times\!10^{-9}$ \\
Epigraph discrepancy$/\max\{1,\|J\|_2\}$ & $9.87\!\times\!10^{-8}$ & $7.54\!\times\!10^{-9}$ \\
Independently assembled gauge KKT residual & $4.45\!\times\!10^{-12}$ & $2.25\!\times\!10^{-11}$ \\
\bottomrule
\end{tabular}
\end{table}

For $K=H_{\rm returned}-H_{\rm native}$, the gauge check evaluates every coefficient $K_{jii}$ and $K_{jik}+K_{jki}$ for $i<k$, rather than testing sampled vectors. The certificate matrix is independently reassembled from the returned $P,H$. For $L=\left[\begin{smallmatrix}P\otimes I&G\\G^T&\tau I\end{smallmatrix}\right]$, the scaled semidefiniteness defect is $\max\{0,-\lambda_{\min}(L)\}/\max\{1,\|L\|_2\}$; we check both the solver's model $G$ and the $G$ reconstructed from the returned $H$. Small negative eigenvalues are recorded as numerical defects, not silently rounded to feasibility. These call times may overlap across workers and are diagnostic only.

\stopcontents[appendices]

\end{document}